\theoremstyle{plain}
\newtheorem{theorem}{Theorem}[section]
\crefname{theorem}{Theorem}{Theorems}
\newtheorem{proposition}[theorem]{Proposition}
\crefname{proposition}{Proposition}{Propositions}
\newtheorem{corollary}[theorem]{Corollary}
\crefname{corollary}{Corollary}{Corollaries}
\newtheorem{lemma}[theorem]{Lemma}
\crefname{lemma}{Lemma}{Lemmas}
\crefname{conjecture}{Conjecture}{Conjectures}
\crefname{problem}{Problem}{Problem}
\newtheorem{claim}[theorem]{Claim}
\crefname{claim}{Claim}{Claims}
\crefname{observation}{Observation}{Observations}
\crefname{setup}{Setup}{Setups}
\crefname{myth}{Myth}{Myths}
\newtheorem{fact}[theorem]{Fact}
\crefname{fact}{Fact}{Facts}
\crefname{algorithm}{Algorithm}{Algorithms}
\crefname{remark}{Remark}{Remarks}
\crefname{example}{Example}{Examples}
\theoremstyle{definition}
\newtheorem{definition}[theorem]{Definition}
\crefname{definition}{Definition}{Definitions}
\crefname{construction}{Construction}{Constructions}
\crefname{question}{Question}{Questions}
\numberwithin{equation}{section}
\setlist[enumerate,1]{label={\upshape (\roman*)}}
\DeclareMathOperator{\probability}{Pr}
\DeclareMathOperator{\expectation}{\mathbf{E}}
\newenvironment{proofclaim}[1][Proof of the claim]{\begin{proof}[#1]}{\end{proof}}
\author[P.~Ara\'ujo]{Pedro Ara\'ujo}
\address[P.~Ara\'ujo]{Department of Mathematics, Faculty of Nuclear Sciences and Physical Engineering, Czech Technical University in Prague, Trojanova 13, 120 00, Prague, Czech Republic}
\email{pedro.araujo@cvut.cz}
\author[M.~Pavez-Sign\'e]{Mat\'ias Pavez-Sign\'e}
\address[M.~Pavez-Sign\'e]{Centro de Modelamiento Matem\'atico (CNRS IRL2807), Universidad de Chile, Santiago, Chile.}
\email{mpavez@dim.uchile.cl}
\author[N.~Sanhueza-Matamala]{Nicol\'as Sanhueza-Matamala}
\address[N.~Sanhueza-Matamala]{Departamento de Ingeniería Matemática, Facultad de Ciencias Físicas y Matemáticas, Universidad de Concepción, Chile.}
\email{nicolas@sanhueza.net}
\thanks{PA was supported by the long-term strategic development financing of the Institute of Computer Science (RVO: 67985807) and by the Czech Science Foundation, grant number GJ20-27757Y. NSM was partly supported by the Czech Science Foundation, grant number GA19-08740S with institutional support RVO: 67985807, while he was affiliated with the Institute of Computer Sciences of the Czech Academy of Sciences; and also by ANID-Chile through the FONDECYT Iniciación Nº11220269 grant. MPS was supported by ANID Basal grant CMM FB210005, by ANID Regular grant N°1241398, and by the European
Research Council (ERC) under the European Union Horizon 2020 research and innovation programme (grant agreement No. 947978) while the author was affiliated with the University of Warwick.}
\title{Ramsey numbers of cycles in random graphs}
\begin{document}

\begin{abstract}
Let $R(C_n)$ be the Ramsey number of the cycle on $n$ vertices.
We prove that, for some $C > 0$, with high probability every $2$-colouring of the edges of $G(N,p)$ has a monochromatic copy of $C_n$, as long as $N\geq R(C_n) + C/p$ and $p \geq C/n$.
This is sharp up to the value of $C$ and it improves results of Letzter and of Krivelevich, Kronenberg and Mond. 
\end{abstract}

\maketitle

\section{Introduction}
 The \emph{Ramsey number} of a graph $H$, denoted by $R(H)$, is the minimum number $N$ so that every $2$-colouring of the edges of the complete graph on $N$ vertices yields a monochromatic copy of $H$. Here, we focus in the case where $H$ is an $n$-vertex cycle $C_n$, in which case it holds~\cite{BondyErdos1973, Rosta1973, FaudreeSchelp1974} that $R(C_3) = R(C_4) = 6$, and
\begin{equation*}
    R(C_n) = \begin{cases}
    2n-1 & \text{if $n \geq 5$ is odd,} \\
    3n/2-1 & \text{if $n \geq 6$ is even.}
    \end{cases}
\end{equation*}
A way to extend this further is by trying to find monochromatic copies of $C_n$ in $2$-edge-coloured host graphs on $R(C_n)$ vertices (or close to $R(C_n)$) which are not complete. In this paper, we investigate the question of whether a random graph is Ramsey for the cycle.

The \emph{binomial random graph} $G(n,p)$ has $n$ vertices and each possible edge appears independently with probability $p = p(n)$.
A sequence of events, indexed by $n\in\mathbb N$, happens \emph{with high probability} (w.h.p.) if the probability of those events tends to $1$ whenever $n$ tends to infinity.
For graphs $G$ and $H$, we say that $G$ is \emph{$H$-Ramsey}, denoted by $G\rightarrow H$, if every $2$-colouring of the edges of $G$ yields a monochromatic copy of $H$.
Here, the relevant question is, for which values of $p$, and $N$ ``close'' to $R(C_n)$, does it hold that $G(N,p) \rightarrow C_n$ w.h.p.?

Recently, Krivelevich, Kronenberg, and Mond~\cite{KKM2020} obtained Ramsey-type results for cycles in 2-edge-colourings of random graphs (extending analogous results of Letzter~\cite{L2016} for paths).
Their results for odd and even cycles can be compactly stated as follows:
for every $\varepsilon > 0$, there exists $C > 0$ such that if $p \geq C/n$, then w.h.p. $G(R(C_n) + \varepsilon n, p) \rightarrow C_n$.

Those results consider random graphs on $R(C_n) + \varepsilon n$ vertices, instead of just $R(C_n)$.
The case $p = \Theta(1/n)$ shows that the extra $\varepsilon n$ vertices are necessary (we will say more about this later).
But what can we say for other values of $p$?
Balogh et al.
~\cite{BKLL2022} showed that for any large even $n$, every graph $G$ on $N = R(C_{n})$ vertices with $\delta(G) \geq 3N/4$ is $C_n$-Ramsey.
Thus, for constant $p > 3/4$,
we see that w.h.p. $G(R(C_{n}),p) \rightarrow C_{n}$, so here no extra vertices are necessary.
In a similar fashion, a recent result of \L{}uczak, Polcyn and Rahimi~\cite{LPR2022} implies that for odd $n$ and constant $p > 1/2$, $G(R(C_{n}),p) \rightarrow C_{n}$.

Our main result bridges the gap between these two extreme situations ($p = \Theta(1/n)$ and $p = \Theta(1)$), obtaining the essentially best-possible Ramsey-type result for cycles in random host graphs, and every possible value of $p$.

\begin{theorem}
	\label{theorem:main}
	There exist positive constants $c$ and $C$ such that w.h.p.
	\begin{enumerate}
		\item \label{item:maincycles} $G(N, p) \rightarrow C_n$ if $N\geq R(C_n)+C/p$ and $p\geq C/n$,
		\item \label{item:bestpossible} $G(N, p) \nrightarrow C_n$ if $N\leq R(C_n)+c/p$ and $p \leq c$, or if $p\leq c/n$.
	\end{enumerate}
\end{theorem}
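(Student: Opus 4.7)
My approach would be the \emph{sparse regularity method}. Given a $2$-edge-colouring of $G(N,p)$, I would apply the sparse regularity lemma to both colour classes simultaneously, obtaining a partition $V(G) = V_1 \cup \cdots \cup V_t$ such that almost all pairs $(V_i, V_j)$ are $(\varepsilon, p)$-regular in both colours. The reduced graph $R$ on the $t$ clusters, with each edge coloured by the majority colour of the corresponding pair, then behaves essentially as a $2$-edge-coloured near-complete graph on $t$ vertices, with clusters of size $m = N/t$ large enough to support subsequent embeddings. The dense-graph input I would use is a stability version of the Ramsey theorem for long monochromatic cycles, in the spirit of Figaj--\L{}uczak for odd cycles and Benevides--Skokan for even cycles. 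This would yield either (a) a monochromatic \emph{connected matching} in $R$ of appropriate size, which after a sparse blow-up becomes a long monochromatic cycle in $G(N,p)$, or (b) a colouring of $R$ structurally close to an extremal Ramsey colouring for $C_n$ (two monochromatic cliques joined by a bipartite graph for odd $n$; the three-class configuration for even $n$), in which a direct structural analysis inside $G(N,p)$ produces the cycle by exploiting the ``defect'' edges that must exist because $|V(G)| > R(C_n)$.

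The central obstacle is controlling the length of the embedded cycle to be \emph{exactly} $n$, rather than merely some multiple of the cluster size. This is where the extra $C/p$ vertices become essential: they form a ``reservoir'' through which short subpaths of the candidate cycle can be rerouted, shifting the length by one or two vertices at a time in the spirit of the absorbing method. The threshold $C/p$ is natural, since each reservoir vertex has expected degree $\Theta(Np)$ into the main structure, and so $C/p$ reservoir vertices contribute $\Theta(CN)$ usable connections --- enough, for $C$ a large constant, to guarantee the flexibility needed. The borderline case $p = \Theta(1/n)$, where the reservoir consists of $\Theta(n)$ vertices, is precisely the regime already handled by Letzter and by Krivelevich--Kronenberg--Mond, and the difficulty in other regimes is to make this absorbing mechanism work robustly for all $p \ge C/n$.

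\textbf{Plan for (ii).}
I would use two explicit constructions depending on the regime. If $p \le c/n$ for a sufficiently small constant $c < 1$, a standard first moment computation shows that w.h.p.\ $G(N,p)$ contains no cycle on $n$ vertices at all (in fact, in this subcritical regime all components of $G(N,p)$ have size $O(\log N)$), and so any $2$-colouring trivially avoids a monochromatic $C_n$. If instead $p \le c$ and $N \le R(C_n) + c/p$, I would fix a subset $M \subseteq V(G(N,p))$ of size $R(C_n) - 1$ and colour the edges of $G(N,p)$ inside $M$ according to an extremal Ramsey colouring witnessing the lower bound for $R(C_n)$. It remains to colour the edges incident to the leftover set $X = V(G(N,p)) \setminus M$, which has size $|X| \le c/p + 1$, by assigning each $x \in X$ to one of the sides of the extremal configuration and colouring its incident edges accordingly. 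The expected sparsity $|X| \cdot p = O(1)$ of random edges within $X$, and the controlled random edges between $X$ and the ``dangerous'' subsets of $M$, make it possible to carry out these assignments so that no monochromatic $C_n$ is completed. The main obstacle here is verifying the latter claim: one must do a short case analysis, different for odd and even $n$ because of the different extremal structures, and check that no random edge configuration can close a monochromatic cycle through $X$.
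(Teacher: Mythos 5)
Your plan for part~\ref{item:bestpossible} matches the paper's argument: the subcritical case $p\leq c/n$ is dismissed via component sizes, and for $c/n\leq p\leq c$ one constructs an extremal colouring on a core set of $R(C_n)-1$ vertices and then assigns the $O(1/p)$ leftover vertices (equivalently: isolates a set $X$ of size $c/p+2$ whose neighbourhood is small and can be confined to one part). The case analysis you flag is exactly what the paper carries out.

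For part~\ref{item:maincycles}, your Step~1 (sparse regularity plus a dense-graph stability lemma for cycles/connected matchings, à la \L{}uczak, Figaj--\L{}uczak, Benevides--Skokan) is the same as the paper's ``coarse stability,'' and your handling of the non-extremal case --- turning a connected matching in the reduced graph into an exact-length cycle via a sparse blow-up lemma --- is also what the paper does. The divergence, and the place where your plan has a real gap, is the extremal case. Coarse stability only produces a partition $V_1\cup V_2$ in which the colouring is $\alpha$-extremal, i.e.\ correct up to an $O(\alpha n)$ fraction of vertices and edges; it does \emph{not} deliver any partition with error $O(1/p)$. The paper inserts a ``fine stability'' step precisely to close this gap: it reallocates vertices one at a time, removing those with too few neighbours into the current parts and shifting those with the wrong colour-degree, and uses a tree-embedding lemma in bipartite expanders to show that a vertex with many blue neighbours into \emph{both} parts already forces a blue $C_n$ (hence such vertices cannot exist). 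Only after this bookkeeping is the leftover set $W$ shown to have size $O(1/p)$, whence one part has at least $n$ (odd case) or $n/2$ (even, bipartite case) vertices, and one then finds the exact $n$-cycle there.

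Your proposed mechanism for getting length exactly $n$ --- an absorbing-style reservoir of $C/p$ vertices through which subpaths are rerouted --- is not what the paper uses and is, as described, problematic. First, you never explain how to reduce the $O(\alpha n)$-size discrepancy from coarse stability down to $O(1/p)$, so the ``reservoir'' is not yet identified; this is the actual bottleneck and the paper's main new contribution beyond Letzter and KKM. Second, absorbing structures are delicate to build at density $p=\Theta(1/n)$, where each vertex has only $O(1)$ expected neighbours and there are far too few gadgets for a standard absorption argument. The paper avoids absorption entirely and instead runs P\'osa rotation--extension inside a sparsified monochromatic subgraph: minimum degree $\Omega(np)$ yields expansion (Lemma~\ref{lemma:expansionfromminimumdegree}), expansion plus joinedness yields many boosters (Lemma~\ref{lemma:tapaoenboosters} and its bipartite analogue), and a Lee--Sudakov-style second-moment argument (Lemma~\ref{lemma:conditionalbooster}) shows that enough of those boosters survive in $G(N,p)$ to be added one at a time until the graph is Hamiltonian on the desired vertex set. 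If you want to push the absorption route you would need, at minimum, to first prove something equivalent to the fine stability lemma; otherwise the argument does not get off the ground.
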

We can show part \ref{item:bestpossible} right away.
Note this means that part~\ref{item:maincycles} is best possible up to the specific values of $c$ and $C$.
\begin{proof}[Proof of Theorem~\ref{theorem:main}\ref{item:bestpossible}]
    For $p \leq 1/n$, it is well-known that the size of the largest connected component of $G(n,p)$ is $O(n^{2/3})$, and thus $G(n,p)$ has no cycle on $\Omega(n)$ vertices.
    Thus we can assume from now on that $1/n \leq p \leq c$.
    
    Suppose then that $N\le R(C_n)+c/p$ and $1/n \leq p \leq c$.
    Note that the expected number of vertices adjacent to a set of size $c/p + 2$ is at most $Np(c/p + 2) \leq 9cn$ (we used in the last inequality that $N \leq 2n + c/p \leq 2n + cn \leq 3n$ and $p \leq c$).
    Using this, it can be checked that w.h.p. there exists a set $X$ of size $c/p+2$ such that the size of the neighbourhood of $X$ is at most $10cn \leq n/2 - 1$.
    Note also that $|X| \leq cn + 2 \leq n-1$.
    
    We consider first the case where $n$ is odd.
    In this case, $R(C_n) = 2n-1$.
    Let $G=G(N,p)$, where $N=2n+c/p$.
    We partition $V(G)=V_1\cup V_2\cup X$ so that $|X| = c/p + 2$, $N(X)\subset V_1$ and $|V_1|=|V_2|=n-1$
    (this is possible since $|N(X)| \leq 10cn \leq n/2 - 1 < n-1$).
    We colour the edges of $G$ in such a way that all edges inside $V_1$, $V_2$ and $X$ are red, and all remaining edges are blue. 
    This colouring cannot contain a monochromatic cycle of length $n$ as the blue components are bipartite, and each red component has at most $n-1$ vertices.
    
    Now we consider even $n$.
    If $n$ is even, then $N=R(C_n)+c/p = 3n/2+c/p$.
    We partition $V(G)=V_1\cup V_2\cup X$ so that $|V_1|=n-1$, $|V_2|=n/2-1$, $|X| = c/p + 2$ and $N(X)\subset V_2$ (again, this is possible since $|N(X)| \leq n/2 - 1$).
    We colour all edges inside $V_1$ with red, all edges between $V_1\cup X$ and $V_2$ in blue, all edges inside $X$ in red, and use any colour for the edges inside $V_2$.
    No red cycle of length $n$ can exist as the red components have size at most $n-1$,
    and there is no blue cycle on $n$ vertices in $G$ as otherwise it should use at least $n/2>|V_2|$ vertices from $V_2$.
\end{proof}

\subsection{Proof sketch} \label{section:proofsketch}
Our proof of Theorem~\ref{theorem:main}\ref{item:maincycles} proceeds in three steps.
Suppose that $n$ is given, $N = R(C_n) + C/p$, and we have a red-blue edge-colouring of $G = G(N, p)$ without any monochromatic $n$-cycle.

\begin{enumerate}
    \item \emph{Step 1: Coarse stability.}
    We show first that the edge-colouring of $G$ must in fact follow closely a global pattern, which essentially resembles the colourings of cliques on $R(C_n)-1$ vertices without any monochromatic $C_n$.
    Up to renaming the colours, the structure can be described as follows.
    We find a partition of $V(G)$ into two sets, $V_1$ and $V_2$, such that the edges between $V_1$ and $V_2$ are almost all blue, and almost all edges inside $V_1$ are red.
    If $n$ is odd, then we also know that $|V_1|, |V_2|$ are very close to $n$ and that almost all edges inside $V_2$ are red; if $n$ is even, then we know instead that $|V_1|$ and $2|V_2|$ are close to $n$.
    
    \item \emph{Step 2: Fine stability.}
    Here, we refine the structure found in the first step.
    By reallocating vertices carefully, we will transform the partition $V_1, V_2$ to a partition of $V(G)$ into three sets, $V'_1, V'_2$ and $W$. Here, $V'_1$ and $V'_2$ behave similarly in terms of the colouring of $V_1, V_2$, but now we can also ensure that every vertex in $V'_1 \cup V'_2$ is adjacent to at least $pn/100$ vertices of the `correct colour' into each cluster (e.g. each vertex in $V'_1$ has $pn/100$ blue neighbours in $V'_2$ and $pn/100$ red neighbours in $V'_1$).
    Moreover, and crucially, we can also ensure that $W$ is very small, of size at most $C/p$.
    
    \item \emph{Step 3: Cycle finding.}
    Here, depending on the relative sizes of $V'_1$ and $V'_2$, we will find a cycle inside one of the monochromatic subgraphs induced by $V'_1$, by $V'_2$, or in the bipartite subgraph induced by $V'_1$ and $V'_2$.
    For instance, if $V'_1$ has size at least $n$, we will find a red monochromatic cycle of length $n$ inside $G[V'_1]$.
    Since $|W| \leq C/p$, there will always be one choice that works, and thus we finish the proof.
\end{enumerate}

For the proof of Step 1, we will use the technique of (multicoloured) sparse regularity, combined with known stability results for cycle-free colourings in almost complete graphs.
Similar techniques were used by Letzer~\cite{L2016} and Krivelevich, Kronenberg and Mond~\cite{KKM2020} to find long cycles and paths in edge-coloured random graphs.
In fact, this first step is enough if we were interested in finding monochromatic cycles or paths of length $(1 - o(1))n$ instead; the next two steps are crucial to improve this to length exactly $n$.

For the proof of Step 2, we refine the structure found in Step 1 in a vertex-by-vertex fashion, and we use \emph{tree embeddings} in expanders and bipartite expanders as a main tool.

For the proof of Step 3, we use the rotation-extension technique in the monochromatic subgraphs induced by $G[V'_1]$ and $G[V'_2]$.
Here we rely heavily on the expansion properties of subgraphs of random graphs which also have minimum degree conditions.
We also need to tailor this method to work in bipartite expander subgraphs.

\subsection{Organisation}
The paper is organised as follows.
In Section~\ref{section:preliminaries} we fix notation and then gather known results about expansion, random graphs, and sparse regularity.
In Section~\ref{section:stability} we prove our `coarse stability' result (Theorem~\ref{theorem:stability:1}), which corresponds to Step 1 in the sketch above.
In the next two sections, we provide the tools to proceed with the `cycle finding' of Step 3, the main result here is the Cycle finder lemma (Lemma~\ref{lemma:cyclesprescribedlength}).
In Section~\ref{section:rotation} we use the rotation-extension method in expanders; in Section~\ref{section:cycles} we argue that the monochromatic subgraphs of random graphs are indeed expanders.
Finally, the pieces to prove Theorem~\ref{theorem:main} are put together in Section~\ref{section:mainproof}.
Some lemmas about tree embeddings in bipartite expanders are defered to Appendix~\ref{appendix:bipartiteextendibility}.

\section{Preliminaries} \label{section:preliminaries}

Our notation here is standard.
Given a graph $G$, we write $V(G)$ and $E(G)$ for its vertex set and edge set, respectively.
We denote by $|G|=|V(G)|$ its number of vertices and write $e(G)=|E(G)|$ for its number of edges.
For a subset $U\subset V(G)$, $G[U]$ is the graph induced on $U$ and set $e(U)=e(G[U])$.
Given disjoint subsets $A,B\subset V(G)$, we write $G[A,B]$ for bipartite graph induced by $A$ and $B$, that is, the graph with vertex set $A\cup B$ and all the edges of the form $e=ab\in E(G)$ with $a\in A$ and $b\in B$, and set $e(A,B)=e(G[A,B])$. Given a vertex $v\in V(G)$, the neighbourhood of $v$ is denoted by $N(v)$ and its degree is $d(v)=|N(v)|$.
The maximum degree of $G$ is denoted by $\Delta(G)$, and the minimum degree by $\delta(G)$.
For a subset $U\subset V(G)$, we write $N(v,U)$ for the set of neighbours of $v$ in $U$.
The set of neighbours of $U\subset V(G)$ is $\Gamma(U)=\bigcup_{u\in U}N(u)$ and its \textit{external neighbourhood} is $N(U)=\bigcup_{u\in U}N(u)\setminus U$.
Furthermore, we write $N(U,W)=N(U)\cap W$.
When working with more than one graph, we indicate the graph considered in the subscript in order to avoid confusion, for example, $e_G(A,B)$ indicates the number of edges between $A$ and $B$ in $G$.

\subsection{Expansion}
Informally speaking, a graph $G$ is an expander if every sufficiently small set of vertices $U\subset V(G)$ has many neighbours outside $U$, in which case we say that $U$ \textit{expands} into $V(G)\setminus U$.

\begin{definition}[Expander graph]Let $G$ be a graph,  let $m,m'\in\mathbb N$, and let $d>0$.
\begin{enumerate}
    \item $G$ is an \emph{$(m, d)$-expander} if for every $S \subset V(G)$ with $|S| \leq m$, we have $|N(S)| \geq d|S|$.
    \item $G$ is \emph{$(m, m')$-joined} if every pair of disjoint sets $A, B \subset V(G)$, with $|A| = m$ and $|B| = m'$, have an edge between them.
    If $m=m'$, then we say that $G$ is \emph{$m$-joined}.
\end{enumerate}
\end{definition}
The following observation shows that we can get improved expander parameters in a graph which is both an expander and joined.

\begin{lemma} \label{lemma:improvedexpansion}
    Let $M$, $m$, $n$ satisfy $m \leq M$.
    Suppose $G$ is an $n$-vertex graph which is $(m,M)$-joined.
    Then, for each $X \subseteq V(G)$ with $|X| \geq m$, $|N(X)| \geq n - M +1 - |X|$.
    In particular, if $G$ is also a $(k,d)$-expander and $k+1 \geq m$, then $G$ is also a $((n-M+1)/(d+1), d)$-expander.
\end{lemma}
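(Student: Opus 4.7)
The plan is to prove the two assertions in sequence, with the first being the technical core and the second following by a short case-split.

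For the first claim, I would consider the set $Y = V(G) \setminus (X \cup N(X))$, that is, the vertices which are neither in $X$ nor adjacent to any vertex of $X$. By definition of external neighbourhood, there are no edges between $X$ and $Y$. Since $|X| \geq m$ by hypothesis, the $(m,M)$-joined property forces $|Y| \leq M-1$; otherwise we could pick disjoint sets of sizes exactly $m$ and $M$ inside $X$ and $Y$ with no edges between them, a contradiction. Rearranging $|X| + |N(X)| + |Y| = n$ together with $|Y| \leq M - 1$ gives $|N(X)| \geq n - M + 1 - |X|$, as required.

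For the second claim, let $X \subseteq V(G)$ with $|X| \leq (n-M+1)/(d+1)$; the goal is to show $|N(X)| \geq d|X|$. If $|X| \leq k$, then this is immediate from the $(k,d)$-expander property. Otherwise $|X| \geq k+1 \geq m$, so the first part applies and yields
\[
|N(X)| \geq n - M + 1 - |X| \geq (d+1)|X| - |X| = d|X|,
\]
where the middle inequality is precisely the upper bound on $|X|$ rewritten. This shows $G$ is an $((n-M+1)/(d+1), d)$-expander.

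There is no real obstacle here; the only mildly subtle point is realising that the hypothesis $k+1 \geq m$ is exactly what is needed to glue the two regimes together, because the expander condition handles $|X| \leq k$ while the joinedness-based bound handles $|X| \geq m$. The arithmetic matching $(d+1)|X| \leq n - M + 1$ with $n - M + 1 - |X| \geq d|X|$ is what dictates the specific denominator $d+1$ appearing in the conclusion.
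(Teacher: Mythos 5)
Your proof is correct and follows essentially the same approach as the paper: you bound the size of the "non-neighbourhood" $Y = V(G)\setminus(X\cup N(X))$ using joinedness, and then split the second claim into the regimes $|X|\leq k$ (expander) and $|X|\geq k+1\geq m$ (first part), exactly as the paper does. The only cosmetic difference is that you argue directly that $|Y|\leq M-1$ while the paper phrases it as a contrapositive, and your arithmetic is in fact slightly cleaner than the paper's (which has a harmless $n-M$ versus $n-M+1$ slip in its proof text).
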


\begin{proof}
    Let $X$ be a subset of size at least $m$,
    and let $X' \subseteq X$ have size exactly $m$.
    If $|N(X)| \leq n - M - |X|$, then there exists a set $Y \subseteq V(G) \setminus (N(X) \cup X)$ of size at least $M$, which means there are no edges between $X'$ and $Y$ and contradicts that $G$ is $(m, M)$-joined.
    
    Now suppose $G$ is in addition a $(k,d)$-expander and $k + 1 \geq m$.
    To show that $G$ is a $((n-M+1)/(d+1), d)$-expander, let $X \subseteq V(G)$ with $1 \leq |X| \leq (n-M+1)/(d+1)$, we need to show that $|N(X)| \geq d|X|$.
    If $|X| \leq k$ this follows since $G$ is a $(k,d)$-expander.
    Otherwise, we have $|X| \geq k+1 \geq m$, so
    $|N(X)| \geq n - M + 1 - |X| \geq d |X|$, as required.
\end{proof}

The following lemma says that we can remove a few vertices from an expander graph so that it remains expander.

\begin{lemma} \label{lemma:friedmanpippenger-expansion}
    Let $d \geq 2$ and let $G$ be a 
    $(2k-2,d)$-expander.
    For any $0 \leq r \leq k$, there exists a set $X \subseteq V(G)$ of size $r$ such that $G - X$ is a $(2k-2,d-1)$-expander.
\end{lemma}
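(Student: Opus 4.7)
The plan is to prove the lemma by induction on $r$, with trivial base case $r=0$ (take $X=\emptyset$; any $(2k-2,d)$-expander is in particular a $(2k-2,d-1)$-expander). For the inductive step, the hypothesis applied with $r-1$ provides $X_0 \subseteq V(G)$ with $|X_0|=r-1$ such that $H := G - X_0$ is a $(2k-2, d-1)$-expander; the task then reduces to finding a single vertex $v \in V(H)$ such that $H - v$ remains a $(2k-2, d-1)$-expander, since setting $X := X_0 \cup \{v\}$ finishes the argument.

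Call a set $S \subseteq V(H)$ with $|S|\leq 2k-2$ \emph{tight} if $|N_H(S)| = (d-1)|S|$ (i.e.\ the expansion lower bound for $S$ in $H$ is met with equality), and let $\mathcal{T}$ be the family of tight sets. A direct unravelling of the definitions shows that $v$ is admissible iff $v \notin \bigcup_{S \in \mathcal{T}} N_H(S) =: B$, so I need to show $B \subsetneq V(H)$. Two observations will control $B$. First, the stronger $(2k-2,d)$-expansion of $G$ gives
$|N_H(S)| \geq |N_G(S)| - |X_0| \geq d|S| - (r-1)$,
which combined with tightness forces $|S| \leq r-1$ for every $S \in \mathcal{T}$. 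Second, $\mathcal{T}$ is closed under union when the union fits: applying the submodularity of the closed neighborhood $N_H^\star(\cdot) := (\cdot) \cup N_H(\cdot)$ together with the $(d-1)$-expansion of $H$ on both $S_1 \cap S_2$ and $S_1 \cup S_2$ forces equality throughout the chain, and yields $S_1 \cup S_2 \in \mathcal{T}$ whenever $|S_1 \cup S_2| \leq 2k-2$.

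Because the first observation bounds every tight set by $r-1 \leq k-1$, any union of two tight sets already fits in size $2(k-1) = 2k-2$, so the closure always applies; iterating over all tight sets produces a single $S^* := \bigcup_{S \in \mathcal{T}} S \in \mathcal{T}$ with $|S^*| \leq r-1$. Since every tight set lies inside $S^*$, we obtain $B \subseteq S^* \cup N_H(S^*)$ and $|B| \leq d|S^*| \leq d(k-1)$. On the other hand, applying the $(2k-2,d)$-expansion of $G$ to a set of size $2k-2$ gives $|V(G)| \geq (d+1)(2k-2)$, so $|V(H)| \geq (d+1)(2k-2) - (k-1) = (k-1)(2d+1)$, which strictly exceeds $d(k-1)$ for $k \geq 2$ (the case $k=1$ is vacuous, as $(0,d-1)$-expansion imposes no constraint). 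Hence $V(H) \setminus B \neq \emptyset$ and any vertex therein serves as $v$. I expect the most delicate step to be the submodularity-based closure of $\mathcal{T}$: the inequality chain must be written carefully enough that equality is forced at every stage, which is what extracts tightness of $S_1 \cup S_2$ from tightness of $S_1$ and $S_2$.
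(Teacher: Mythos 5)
Your proof is correct and takes essentially the same approach as the paper's: the same induction on $r$, the same notion of tight sets (the paper calls them \emph{critical}), and the same two key observations that tight sets have size at most $k-1$ and are closed under union via submodularity of the neighbourhood function. The only difference is in the endgame: the paper assumes for contradiction that every vertex is blocked, forms $Y^\ast = \bigcup_v Y_v$, and derives a contradiction from the expander inequality applied to $X \cup Y^\ast$, whereas you bound $|B| \le d(k-1)$ directly and compare it against $|V(H)| \ge (2d+1)(k-1)$ — both are valid, and your version implicitly uses the easy fact that a non-empty $(2k-2,d)$-expander has at least $(d+1)(2k-2)$ vertices.
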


\begin{proof}
    For $d \geq 3$, this can be seen to be true by inspecting the proof of~\cite[Theorem 1]{FP1987} while embedding an $r$-vertex path.
    For completeness, we give a proof following that outline.
    
    We use induction on $r$.
    If $r = 0$ this is trivial, and if $r = 1$ then it is easy to see we can select an arbitrary vertex as $X$.
    So suppose $0 < r \leq k-1$ and that there exists a set $X$ of size $r$ such that $G - X$ is a $(2k-2, d-1)$-expander.
    Fix such a set $X$ from now on.
    We shall prove the statement for $r+1$, i.e. prove the existence of a set $X'$ of size $r+1$ such that $G - X'$ is a $(2k-2, d-1)$-expander.
    
    Say a set $Y \subseteq G - X$ is \emph{critical} if $|Y| \leq 2k-2$ and $|N_{G-X}(Y)| = (d-1)|Y|$.
    Note that a critical set can have size at most $k-1$.
    Indeed, since $G$ is a $(2k-2, d)$-expander, then $(d-1)|Y| = |N_{G-X}(Y)| \geq |N_G(Y)| - |X| \geq d|Y| - |X| \geq d|Y| - k + 1$, and the bound follows.
    On the other hand, the union of critical sets is again critical.
    Indeed, if $Y, Z$ are critical sets, then both have size at most $k-1$  and thus $|Y \cup Z| \leq 2k - 2$, and since $G - X$ is a $(2k-2, d-1)$-expander we have $(d-1)|Y \cap Z| \leq |N_{G-X}(Y \cap Z)|$ and also $(d-1)|Y \cup Z| \leq |N_{G-X}(Y \cup Z)| \leq |N_{G-X}(Y)|+|N_{G-X}(Z)|-|N_{G-X}(Y \cap Z)| \leq (d-1)(|Y|+|Z|-|Y \cap Z|) = (d-1)|Y \cup Z|$, thus indeed $Y \cup Z$ is critical.
    
    If for some $v \in V(G) - X$ we have that $G - X - v$ is a $(2k-2,d-1)$-expander, we are done.
    So we can assume that for every $v \in V(G) - X$ there exists a set $Y_v \subseteq V(G) - X - v$ of size at most $2k-2$ such that $|N_{G-X-v}(Y_v)| < (d-1)|Y_v|$.
    Since $G - X$ is a $(2k-2, d-1)$-expander, it must hold that $|N_{G-X}(Y_v)| = (d-1)|Y_v|$ and $v \in N_{G-X}(Y_v)$.
    Therefore each $Y_v$ is critical, and thus $|Y_v| \leq k-1$ for each $v \in V(G) - X$, and $Y^\ast = \bigcup_{v \in V(G) - X} Y_v$ is also critical, and therefore $|N_{G-X}(Y^\ast)| = (d-1)|Y^\ast|$.
    But, since $v \in N_{G-X}(Y_v)$ for each $v \in V(G) - X$, we deduce $V(G) - X - Y^\ast \subseteq N_{G-X}(Y^\ast)$.
    Using that $G$ is a $(2k-2,d)$-expander and $|X \cup Y^\ast| \leq 2k-2$, we have $d|Y^\ast| \leq d|Y^\ast \cup X| \leq |N_{G}(X \cup Y^\ast)| \leq |V(G)| - |X| - |Y^\ast| = |N_{G-X}(Y^\ast)| = (d-1)|Y^\ast|$, a contradiction.
\end{proof}
The following lemma is similar in spirit to Haxell's tree embedding result~\cite{H2001} and can be found in~\cite[Corollary 3.7]{M2019}.
\begin{lemma}\label{lemma:embedding:expander}Let $n,m,d\in\mathbb N$ satisfy $d\ge 4$. Let $G$ be a $(2m,d)$-expander graph on $n$ vertices which is also $m$-joined. Let $T$ be a tree with maximum degree at most $d/2$ such that $|T|\le n-(2d+3)m$. Then, for every vertex $t\in V(T)$ and $v\in V(G)$, there exists a copy of $T$ into $G$ such that $t$ is mapped into $v$. 
\end{lemma}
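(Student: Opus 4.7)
The plan is to embed $T$ vertex by vertex along a BFS traversal starting at $t$, with $t$ mapped to $v$, in the spirit of the Friedman--Pippenger tree embedding technique. At each intermediate step we have a partial embedding $\phi\colon T'\to G$ of a subtree $T'\subseteq T$ containing $t$, and to extend we pick an unembedded neighbour $t'$ of some $s\in T'$ and need to find an unused vertex in $N_G(\phi(s))$ to serve as $\phi(t')$. The difficulty, as always with such arguments, is not to find any extension but one that preserves enough expansion to continue all the way to the end of $T$.

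Concretely, I would maintain the invariant that the partial embedding is \emph{extendible} in the following sense: for every set $S\subseteq\phi(T')$ of size at most $2m$, the external neighbourhood of $S$ in $G-\phi(T')$ has size at least $(d-1)|S|$. Preservation of this invariant under single-vertex extensions is exactly the content of the Friedman--Pippenger argument as carried out in \cref{lemma:friedmanpippenger-expansion}: each use of one more vertex can cost at most one unit in the expansion parameter, and it can be absorbed by setting aside a constant number of ``critical'' vertices per step. Because the maximum degree of $T$ is at most $d/2$ (strictly less than the expansion factor $d$), there is enough slack in the invariant to guarantee that, from the reservoir of $(d-1)$ unused neighbours available at $\phi(s)$, we can always select one $\phi(t')$ that keeps extendibility intact.

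The role of the $m$-joined hypothesis is to handle the regime where $\phi(T')$ has grown large. Once $|S|\ge m$, local expansion is no longer needed: by \cref{lemma:improvedexpansion}, the $(m,m)$-joined property guarantees that any $m$-set already has all but at most $m-1$ vertices of $G$ in its neighbourhood, so the extendibility condition for large $S$ is automatic as long as we have reserved enough unused vertices globally. Combining this with the local expansion for $|S|\le 2m$ yields a uniform extendibility guarantee, and the size bound $|T|\le n-(2d+3)m$ is precisely tailored so that after the whole embedding there remain at least roughly $(2d+3)m$ unused vertices, which is enough to feed both the Friedman--Pippenger ``critical set'' overhead (of order $dm$) and the $m$-joined slack (of order $m$).

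The main obstacle I expect is the bookkeeping: carefully defining the extendibility invariant so that it is both preserved step-by-step and strong enough to supply a valid neighbour of $\phi(s)$ at every step, while keeping track of which of the remaining vertices have been implicitly ``reserved'' to guard future critical sets. Once the invariant is in place, the inductive extension step is a direct application of \cref{lemma:friedmanpippenger-expansion} to the graph $G-\phi(T'-s)$, and the case distinction between small and large sets is handled by \cref{lemma:improvedexpansion}; the whole argument then runs for exactly $|T|-1$ steps and produces the desired copy of $T$ rooted at $v$.
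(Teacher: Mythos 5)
The paper does not actually prove this lemma: it cites Montgomery \cite[Corollary 3.7]{M2019}, and in Appendix~\ref{appendix:bipartiteextendibility} it develops the \emph{bipartite} analogue of the underlying machinery. You have correctly identified that machinery as the relevant technique --- build the tree leaf by leaf while maintaining an extendibility invariant, and use joinedness once the embedded piece is large. But as written the proposal has two genuine gaps.

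First, the invariant you state --- ``for every set $S\subseteq\phi(T')$ of size at most $2m$, the external neighbourhood of $S$ in $G-\phi(T')$ has size at least $(d-1)|S|$'' --- is not the one that closes inductively. The working definition (compare the bipartite Definition in Appendix~\ref{appendix:bipartiteextendibility}, or \cite[Definition 3.1]{M2019}) quantifies over \emph{all} small sets $U\subseteq V(G)$, not just subsets of the embedded image, and carries the correction term
\[
|N(U)\setminus V(S)| \;\ge\; (d-1)|U| \;-\; \sum_{x\in U\cap V(S)}\bigl(d_S(x)-1\bigr).
\]
This correction term is exactly where the hypothesis $\Delta(T)\le d/2$ enters and what gives the invariant the slack you appeal to informally: attaching a leaf to a vertex $s$ that already has children eats into the expansion of sets containing $\phi(s)$, and that loss must be tracked against $d_S(\phi(s))$. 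Your version, which neither quantifies over arbitrary $U$ nor carries the correction term, is not preserved by the extension step, so the induction does not go through as stated.

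Second, the claim that the extension step is ``a direct application of \cref{lemma:friedmanpippenger-expansion} to $G-\phi(T'-s)$'' is not right. That lemma asserts the \emph{existence} of some $r$-set whose removal preserves expansion; it says nothing about the particular set $\phi(T'-s)$ determined by your embedding, nor does it produce the next image vertex. The step you actually need is a single-edge extension lemma in the style of Lemma~\ref{lemma:adding:edge:bipartite}: given a $(d,m)$-extendable $S$ and $s\in S$ with $d_S(s)\le d-1$, one can choose $y\in N_G(s)\setminus V(S)$ so that $S+sy$ remains $(d,m)$-extendable. Its proof is the union-of-critical-sets argument (Claims~\ref{claim:extendability:1}--\ref{claim:extendability:2} in the appendix), and this is the real work, not bookkeeping. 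Once that lemma is available, iterating over a leaf-ordering of $T$ and using $m$-joinedness plus the bound $|T|\le n-(2d+3)m$ to verify the large-set hypothesis at each step does yield the result; but the proposal as written defers exactly the load-bearing part.
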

\subsection{Bipartite expansion}
It is also possible, and useful, to define an expansion-like property in bipartite graphs.

\begin{definition}[Bipartite expander graph]Let $G$ be a bipartite graph with parts $V_1$ and $V_2$.
Let $m\in\mathbb N$, and let $d>0$.
\begin{enumerate}
    \item $G$ is an \emph{$(m, d)$-bipartite-expander} if for each $i \in \{1, 2\}$, and for every $S \subset V_i$ with $|S| \leq m$, we have $|N(S)| \geq d|S|$.
    \item $G$ is \emph{$m$-bipartite-joined} if for every pair of sets $A \subseteq V_1$ and $B \subset V_2$, with $|A| = |B| = m$, we have $e(A,B)>0$.
\end{enumerate}
\end{definition}
The following is a bipartite version of Lemma~\ref{lemma:friedmanpippenger-expansion}.

\begin{lemma} \label{lemma:friedmanpippenger-expansion-bipartite}
    Let $d \geq 3$ and let $G$ be a bipartite graph on classes $V_1, V_2$ which is a $(2k,2d+5)$-bipartite-expander.
    For any $0 \leq r \leq k$, there exists a set $X \subseteq V_2$ of size $r$ such that $G - X$ is a $(2k,d)$-expander.
\end{lemma}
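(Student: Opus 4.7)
The plan is to mirror the Friedman--Pippenger-style argument used in Lemma~\ref{lemma:friedmanpippenger-expansion}, proceeding by induction on $r$. The base case $r=0$ is immediate, since a $(2k,2d+5)$-bipartite-expander is in particular a $(2k,d)$-expander: in any bipartite graph $N(S) = N(S\cap V_1)\sqcup N(S\cap V_2)$, so the bipartite-expansion inequality applied separately to each part implies the ordinary one. For the inductive step, suppose $X_r \subseteq V_2$ of size $r < k$ has been found with $G-X_r$ still a $(2k,d)$-expander; I want to produce some $v \in V_2 \setminus X_r$ such that $G-X_r-v$ is again a $(2k,d)$-expander. Since we only ever delete vertices of $V_2$, the expansion property of $G-X_r-v$ can fail only on subsets $S \subseteq V_1$ of size at most $2k$ (any $S \subseteq V_2 \setminus X_r$ still has $N(S)\subseteq V_1$ unchanged).

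Following the non-bipartite proof, say that $Y \subseteq V_1$ is \emph{critical} in $G - X_r$ if $|Y| \leq 2k$ and $|N_{G-X_r}(Y)| = d|Y|$. A critical set has size $|Y| \leq r/(d+5)$ because
\[
d|Y| = |N_{G-X_r}(Y)| \geq |N_G(Y)| - r \geq (2d+5)|Y| - r.
\]
A straightforward adaptation of the submodularity computation from the proof of Lemma~\ref{lemma:friedmanpippenger-expansion} shows that the union of two critical sets is again critical---this is valid here because any union of critical sets has size at most $2k/(d+5) \leq 2k$, keeping us in the range of the bipartite-expansion hypothesis. Consequently the union $Y^*$ of all critical subsets of $V_1$ is itself critical.

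If no extending $v$ exists then, for every $v\in V_2\setminus X_r$, we pick a critical $Y_v \subseteq V_1$ with $v \in N_{G-X_r}(Y_v)\subseteq N_{G-X_r}(Y^*)$, giving $V_2\setminus X_r \subseteq N_{G-X_r}(Y^*)$ and therefore $|V_2|-r \leq d|Y^*|$. Combined with the bipartite-expansion lower bound $|V_2| \geq |N_G(Y^*)| \geq (2d+5)|Y^*|$, this forces $(d+5)|Y^*| \leq r$.

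The main obstacle is to turn this into an outright contradiction when $r$ is of order $k$, since unlike in the non-bipartite version of this lemma we cannot combine $X_r \subseteq V_2$ and $Y^* \subseteq V_1$ into a single expanding set. My plan is to spend the remaining slack by invoking bipartite-expansion a second time, now on the $V_2$ side. Specifically, $|V_2\setminus X_r| \leq d|Y^*| \leq dk/(d+5) < 2k$, so the hypothesis yields $|N_G(V_2\setminus X_r)| \geq (2d+5)|V_2\setminus X_r|$, forcing $|V_1|$ to be large. An edge-counting argument between $Y^*$, the set $X_r \cap N_G(Y^*)$ (which by criticality contains at least $(d+5)|Y^*|$ vertices, since $|N_G(Y^*)\cap X_r| = |N_G(Y^*)| - d|Y^*| \geq (d+5)|Y^*|$), and $V_1 \setminus Y^*$ (each vertex of which has at least $d$ neighbours in $V_2 \setminus X_r$ by the inductive hypothesis) should then produce a numerical contradiction, driven by the gap between the factor $2d+5$ in the hypothesis and the target factor $d$ in the conclusion. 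This final calibration---forcing $|Y^*|=0$ despite the asymmetry of the bipartition---is where I expect the bulk of the technical work to lie, and is the main difficulty in adapting Friedman--Pippenger to the bipartite setting.
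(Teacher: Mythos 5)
Your approach is genuinely different from the paper's. The paper constructs a $(d+2,k)$-bipartite-extendable path $S_{2r}$ of length $2r$ (using its extendability machinery, Lemma~\ref{lemma:adding:edge:bipartite}), and then sets $X = V(S_{2r}) \cap V_2$; the fact that $S_{2r}$ is a path (so every vertex has $d_{S_{2r}}(x)\le 2$) immediately yields the required expansion of $G-X$. You instead try to mirror the proof of Lemma~\ref{lemma:friedmanpippenger-expansion}: induct on $r$, deleting one vertex of $V_2$ at a time, and use a critical-set argument to show some vertex can always be deleted. These are different routes to the same conclusion.

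That said, your proposal has a real gap exactly where you flag it. You derive $|V_2 \setminus X_r| \le d|Y^*|$ with $(d+5)|Y^*|\le r$, hence $|V_2|\le r(2d+5)/(d+5) < 2r \le 2k$, and then announce that an edge-counting argument between $Y^*$, $X_r\cap N_G(Y^*)$ and $V_1\setminus Y^*$ ``should'' close the proof, without carrying it out. The contradiction is actually available by a much more elementary observation which you did not make: in any nonempty $(2k,2d+5)$-bipartite-expander both sides are large. Indeed, if $|V_2|<2k$ and $|V_1|\ge 2k$, taking $S\subseteq V_1$ with $|S|=2k$ gives $|V_2|\ge (2d+5)\cdot 2k$, a contradiction; while if both sides were $<2k$ then $|V_1|\ge(2d+5)|V_2|$ and $|V_2|\ge(2d+5)|V_1|$ would force $V_1=V_2=\emptyset$. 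Thus $|V_2|\ge 2k$ for nontrivial $G$, which directly contradicts $|V_2|<2k$ and finishes your induction. You should state this explicitly rather than leave it as an expected obstacle.

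Two further points. First, the paper's statement has a typo: the conclusion should read \emph{$(2k,d)$-bipartite-expander}, not \emph{$(2k,d)$-expander} --- this is what the paper's own appendix proof verifies (it only checks sets $U\subseteq V_i$), and what Lemma~\ref{lemma:cyclesprescribedlength-bipartite} actually uses. You took the statement literally, which forces unnecessary work in the base case (your identity $N(S)=N(S\cap V_1)\sqcup N(S\cap V_2)$ is false as stated, since $N(S)=(\Gamma(S\cap V_1)\setminus S)\cup(\Gamma(S\cap V_2)\setminus S)$, although the conclusion you want there is still true with a small fix). More importantly, your claim that ``the expansion of $G-X_r-v$ can only fail on $S\subseteq V_1$'' is correct for bipartite expansion (the only target that matters), but is not justified for the non-bipartite notion: a mixed set $S=S_1\cup S_2$ can satisfy $|N_{G-X_r}(S)|=d|S|$ with $v\in N_{G-X_r}(S)$ while $S_1$ is not critical, because terms like $|\Gamma(S_1)\cap S_2|$ can absorb slack. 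If you insist on the literal statement, that reduction needs an argument; if you retarget to bipartite expansion (as you should), the reduction is immediate, and your submodularity computation for unions of critical sets goes through cleanly since everything lives on one side of the bipartition.
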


The proof of Lemma~\ref{lemma:friedmanpippenger-expansion-bipartite} can be done similarly as the proof of Lemma~\ref{lemma:friedmanpippenger-expansion} after adapting the concept of expanders and joined to bipartite graphs.
The proof is given in Appendix~\ref{section:appendix-proofs}.

\subsection{Random graphs}

Here we collect some properties that are satisfied, with high probability, by random graphs.

\begin{definition} For $p,\eta\in (0,1]$, we say that a graph $G$ is $(\eta,p)$\textit{-uniform} if for all disjoint sets $A,B\subset V(G)$, with $|A|,|B|\geq \eta |G|$, we have
		\begin{equation}\label{eq:uniform}e(A,B)= (1\pm\eta)p|A||B|\quad\text{and}\quad e(A)= (1\pm\eta)p\dbinom{|A|}{2}.\end{equation}
		Furthermore, if only the upper bounds  in~\eqref{eq:uniform} are known to hold, then we say that $G$ is \emph{$(\eta,p)$-upper-uniform}.
\end{definition}

We also need a similar property that also says something about small sets $A$.

\begin{definition} For $p,\eta\in (0,1]$, we say that a graph $G$ is $(\eta,p)$\emph{-upper-small-uniform} if for all disjoint sets $A,B\subset V(G)$ with $|A| < \eta n$, we have
\begin{equation}
    e(A,B) \leq 7 \eta p n \max \{ \eta n, |B| \} \quad\text{and}\quad e(A) \leq 7 \eta p n^2.
\end{equation}
\end{definition}

The following three results show that random graphs are typically uniform and joined.
They are all straightforward consequences of Chernoff's bound.

\begin{lemma}\label{lemma:upper-uniform}
	For every $\eta>0$ there exists a constant $C>0$ such that if $p\geq C/N$, then w.h.p. $G(N,p)$ is $(\eta,p)$-uniform.
\end{lemma}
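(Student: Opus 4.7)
The plan is to prove this via a standard application of Chernoff's inequality combined with a union bound over all relevant pairs of vertex sets. Fix $\eta > 0$. For any two disjoint sets $A, B \subseteq V(G)$ with $|A|, |B| \geq \eta N$, the random variable $e(A,B)$ is a sum of $|A| \cdot |B|$ independent Bernoulli$(p)$ random variables with mean $p|A||B|$. Similarly, $e(A)$ is a sum of $\binom{|A|}{2}$ such variables with mean $p\binom{|A|}{2}$. The goal is to show that none of these deviates from its mean by more than a factor $\eta$.

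For a single such pair $(A,B)$, Chernoff's inequality gives
\[
\probability\bigl[|e(A,B) - p|A||B|| > \eta\, p|A||B|\bigr] \leq 2 \exp\!\left(-\tfrac{\eta^{2}}{3}\, p|A||B|\right) \leq 2 \exp\!\left(-\tfrac{\eta^{4}}{3}\, p N^{2}\right),
\]
using $|A|, |B| \geq \eta N$. An analogous bound holds for $e(A)$. Since $p \geq C/N$, the exponent is at least $\eta^{4} C N / 3$, which can be made arbitrarily large by choosing $C$ sufficiently large in terms of $\eta$.

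Next, I would take the union bound over all pairs $(A,B)$ of disjoint subsets of $V(G)$, and all single subsets $A$, of $V(G)$. The number of such configurations is crudely at most $4^{N}$ (choosing for each vertex whether it lies in $A$, in $B$, or in neither). Thus the probability that any pair fails the required estimate is bounded by $4 \cdot 4^{N} \exp(-\eta^{4} C N / 3)$. Choosing $C = C(\eta)$ large enough so that $\eta^{4} C / 3 > 2 \log 4$ makes this expression tend to zero as $N \to \infty$, completing the argument.

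There is no real obstacle here; the only point requiring minor care is ensuring the Chernoff constant absorbs both the union bound (of size roughly $\exp(N \log 4)$) and leaves room for the failure probability to vanish, which is guaranteed as soon as $C$ is chosen large relative to $\eta^{-4}$. The argument for $e(A)$ runs in exactly the same way, noting $\binom{|A|}{2} \geq \eta^{2} N^{2}/3$ once $|A| \geq \eta N$ and $N$ is large.
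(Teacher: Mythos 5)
Your proof is correct and takes exactly the approach the paper intends: the authors simply remark that this lemma (along with Lemmas 2.8 and 2.9) is a straightforward consequence of Chernoff's bound, and you have supplied the standard Chernoff-plus-union-bound argument that justifies that remark. The only minor points worth flagging are that the two-sided Chernoff inequality you quote requires $\eta\le 1$ (which holds since $\eta\in(0,1]$ by the definition of $(\eta,p)$-uniform), and that your threshold $\eta^{4}C/3>2\log 4$ is more than enough ($C>3\log 4/\eta^{4}$ already suffices), but neither affects the validity of the argument.
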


\begin{lemma} \label{lemma:upper-small-uniform}
    For sufficiently small $\eta>0$, there exists a constant $C>0$ such that if $p\ge C/N$, then w.h.p. $G(N,p)$ is $(\eta, p)$-upper-small-uniform.
\end{lemma}

\begin{lemma}\label{lemma:gnp:1}
For every $\gamma>0$ and $p\in (0,1)$, w.h.p. $G=G(N,p)$ satisfies the following property. For every pair of disjoint subsets $U,X\subset V(G)$, with $|U|\ge \gamma N$ and $|X|\ge 50/\gamma p$, the number of edges between $U$ and $X$ satisfies $\frac{p}{2}|U||X|\le e(U,X)\le 2p|U||X|$.
\end{lemma}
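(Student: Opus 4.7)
The plan is a direct Chernoff-plus-union-bound argument. The key point that makes everything work is that the size hypothesis forces the expected edge count to grow linearly in $N$.

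First, I would fix a disjoint pair $U, X \subseteq V(G)$ with $|U| = u \ge \gamma N$ and $|X| = x \ge 50/(\gamma p)$. Since edges appear independently, $e(U, X)$ is a binomial random variable with parameters $ux$ and $p$ and mean $pux$, so a standard multiplicative Chernoff bound yields
\begin{equation*}
\Pr\!\left[|e(U, X) - pux| \ge pux/2\right] \le 2\exp(-pux/12).
\end{equation*}
The crucial observation is that the size constraints give $pux \ge p \cdot \gamma N \cdot 50/(\gamma p) = 50N$, a linear-in-$N$ lower bound that holds uniformly over all admissible pairs. In particular, the failure probability for any fixed pair is at most $2\exp(-50N/12)$.

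Second, I would union-bound over all admissible pairs. The number of pairs with prescribed sizes $(u, x)$ is at most $\binom{N}{u}\binom{N}{x} \le 4^N$, and summing over the trivial range $1 \le u, x \le N$ contributes at most an extra factor of $N^2$. Combining this with the single-pair estimate, the probability that some admissible pair $(U, X)$ violates the desired edge bound is at most
\begin{equation*}
N^2 \cdot 4^N \cdot 2\exp(-50N/12) = 2N^2 \bigl(4 e^{-50/12}\bigr)^N,
\end{equation*}
which tends to zero as $N \to \infty$ since $4 e^{-50/12} < 1$.

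There is no real obstacle beyond arithmetically balancing the Chernoff exponent against the $N \ln 4$ entropy of choosing arbitrary subsets of $V(G)$; this balance is precisely what the constant $50$ in the hypothesis $|X| \ge 50/(\gamma p)$ is calibrated to secure. If a form of Chernoff with a smaller constant in the exponent were used, one would simply enlarge $50$ accordingly. Note also that the lemma is vacuous when $50/(\gamma p) > N$, so one may tacitly assume $p \ge 50/(\gamma N)$ throughout.
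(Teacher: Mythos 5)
Your proof is correct and is essentially the argument the paper has in mind (the paper states this lemma is a "straightforward consequence of Chernoff's bound" without spelling it out). The Chernoff step, the observation that the hypotheses force $p|U||X| \ge 50N$, and the $4^N$-entropy union bound are all valid, and the arithmetic $4e^{-50/12} < 1$ checks out.
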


\begin{lemma} \label{lemma:Gnpisjoined}
The following hold:
\begin{enumerate}
    \item There exists a constant $C$ such that if $p \ge C/n$, then w.h.p. $G(n,p)$ is $(5\log(np)/p)$-joined.
    \item For every $c > 0$ there exists $C > 0$ such that w.h.p. $G(n,p)$ is $(C/p, cn)$-joined.
\end{enumerate}
\end{lemma}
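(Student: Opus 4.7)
The plan is to prove both parts by a straightforward first-moment argument. For any fixed pair of disjoint vertex sets $A,B$, the probability that $G(n,p)$ has no edge between $A$ and $B$ equals $(1-p)^{|A||B|} \le e^{-p|A||B|}$. I would then take a union bound over all pairs of disjoint sets of the prescribed sizes, counting with the standard estimate $\binom{n}{k} \le (en/k)^k$ (or just $2^n$ when the size is already linear in $n$).

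For part (i), I would set $m = 5\log(np)/p$. The expected number of disjoint edgeless pairs $(A,B)$ with $|A|=|B|=m$ is at most
\[
\binom{n}{m}^2 e^{-pm^2} \le \exp\bigl(m(2\log(en/m) - 5\log(np))\bigr).
\]
Rewriting $en/m = enp/(5\log(np))$ gives $\log(en/m) \le \log(np) + O(1)$, so the bracket is bounded above by $-3\log(np) + O(1)$. Choosing $C$ large enough that $np \ge C$ makes this strictly negative, and since $m\log(np) \to \infty$ whenever $np$ is bounded below by a constant, Markov's inequality concludes this part.

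For part (ii), I would first observe that if $C/p > n$ the statement is vacuous, so assume $C/p \le n$. The expected number of edgeless disjoint pairs with $|A| = C/p$ and $|B| = cn$ is then at most
\[
(enp/C)^{C/p}\cdot 2^n \cdot e^{-Ccn}.
\]
The key calculus fact is that $k \mapsto k\log(en/k)$ attains its maximum on $(0,n]$ at $k=n$ with value $n$; hence the first factor is at most $e^n$, and the overall expectation is bounded by $\exp((1+\log 2 - Cc)n)$, which tends to $0$ as soon as $C$ is chosen large enough in terms of $c$ (e.g.\ $C > (1+\log 2)/c$).

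The only nontrivial aspect is fixing the constants. In (i) the coefficient $5$ in $m = 5\log(np)/p$ has to exceed twice the coefficient of $\log(np)$ produced by the entropy $\binom{n}{m}^2$, with slack to absorb the $O(1)$ correction from $\log(en/m) - \log(np)$; the ratio $5/2$ is more than enough provided $C$ is chosen so that $np$ is at least a large absolute constant. In (ii) the mild obstacle is the quantifier order: one must choose $C$ after $c$ large enough that the Chernoff gain $Ccn$ defeats the $\binom{n}{cn}\le 2^n$ entropy factor, which is precisely why the statement quantifies $C$ in terms of $c$.
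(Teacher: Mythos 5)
Your proof is correct and is the standard first-moment (union-bound) argument that the paper has in mind when it states these facts are "straightforward consequences of Chernoff's bound"; since you only need the exact probability $(1-p)^{|A||B|}$ that a fixed pair is edgeless, no Chernoff estimate is actually required, and your computation of the entropy terms and the resulting choices of the constant $C$ (and the slack in the coefficient $5$ vs.\ the factor $2$ from $\binom{n}{m}^2$) is correct.
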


\subsection{Sparse regularity}
	 Let $G$ be a graph and let $p\in(0,1]$.
	 For disjoint sets $A,B\subset V(G)$, the \emph{$p$-density} of the pair $(A,B)$, denoted $d_p(A,B)$, is defined by
	\begin{equation*}d_p(A,B)=\frac{e(A,B)}{p|A||B|}.\end{equation*}
Given $\varepsilon >0$, we say that the pair $(A,B)$ is $(\varepsilon ,p)$-regular (in a graph $G$) if for all subsets $A'\subset A$ and $B'\subset B$, with $|A'|\ge \varepsilon|A|$ and $|B'|\ge \varepsilon|B|$, we have
	\[e(A',B')=(d_p(A,B)\pm\varepsilon)p|A'||B'|.\]
We will use the following standard result about regular pairs.
\begin{lemma}\label{lemma:regularpairs}
Let $0<\varepsilon<\alpha$. If $(A,B)$ is an $(\varepsilon,p)$-regular pair with $p$-density $d>0$, then the following hold.
\begin{enumerate}
			\item For any $A'\subset A$  and $B'\subset B$, with $|A'|\geq \alpha |A|$ and $|B'|\geq \alpha |B|$, the pair $(A',B')$ is $(\varepsilon/\alpha,p)$-regular with $p$-density at least $d-\varepsilon$.
			\item There are at most $\varepsilon |A|$ vertices $v\in A$ such that $d(v,B)<(d-\varepsilon)p|B|$.
\end{enumerate}
\end{lemma}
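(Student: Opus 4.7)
The plan is to prove both parts by direct applications of the $(\varepsilon,p)$-regularity condition and a mild counting argument, with no essential obstacle.

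For part (i), I would first apply the regularity of $(A,B)$ to the sets $A'$ and $B'$ themselves. Since $|A'|\geq \alpha|A|>\varepsilon|A|$ and $|B'|\geq \alpha|B|>\varepsilon|B|$, the definition of $(\varepsilon,p)$-regularity gives
\[e(A',B') = (d \pm \varepsilon)\,p\,|A'||B'|,\]
which immediately yields $d_p(A',B') \geq d-\varepsilon$ as required. To show $(A',B')$ is $(\varepsilon/\alpha,p)$-regular, pick any $A''\subseteq A'$ and $B''\subseteq B'$ with $|A''|\geq (\varepsilon/\alpha)|A'|$ and $|B''|\geq (\varepsilon/\alpha)|B'|$. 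The chain $|A''|\geq (\varepsilon/\alpha)\cdot \alpha|A|=\varepsilon|A|$ and similarly $|B''|\geq \varepsilon|B|$ shows the same regularity bound applies to $(A'',B'')$, so $d_p(A'',B'')$ and $d_p(A',B')$ both lie in $[d-\varepsilon,d+\varepsilon]$ and therefore differ by at most $2\varepsilon$, which is bounded by $\varepsilon/\alpha$ in the intended parameter range (one may equivalently state the conclusion with $\max(2\varepsilon,\varepsilon/\alpha)$ to be fully safe).

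For part (ii), I would use a standard contradiction argument. Let $V$ denote the set of vertices $v\in A$ with $d(v,B)<(d-\varepsilon)p|B|$, and assume for contradiction that $|V|>\varepsilon|A|$. Then the pair $(V,B)$ satisfies the size assumptions of the regularity definition (with $V$ playing the role of $A'$ and $B$ of $B'$), so
\[e(V,B)\geq (d-\varepsilon)\,p\,|V||B|.\]
On the other hand, by the defining property of $V$,
\[e(V,B)=\sum_{v\in V} d(v,B) < (d-\varepsilon)\,p\,|V||B|,\]
a direct contradiction. Hence $|V|\leq \varepsilon|A|$, as claimed.

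I do not anticipate a genuine obstacle: both assertions follow immediately from the definition, and the only mild subtlety is the gap between $2\varepsilon$ and $\varepsilon/\alpha$ in part (i), which is harmless in all subsequent applications within the paper.
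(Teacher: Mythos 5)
The paper does not supply a proof of this lemma---it is cited as ``the following standard result about regular pairs''---so your proof stands alone. Both halves are essentially correct and follow the standard argument, but let me address the one subtle point you raise.

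Part (ii) is right: letting $V$ be the exceptional set, if $|V|>\varepsilon|A|$ then regularity applied to $(V,B)$ forces $e(V,B)\geq (d-\varepsilon)p|V||B|$, while summing degrees gives the opposite strict inequality. No issues.

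For part (i), the density claim ($d_p(A',B')\geq d-\varepsilon$) is immediate exactly as you say. For the regularity claim, your argument shows that for any $A''\subseteq A'$, $B''\subseteq B'$ with $|A''|\geq(\varepsilon/\alpha)|A'|\geq\varepsilon|A|$ and similarly for $B''$, both $d_p(A'',B'')$ and $d_p(A',B')$ lie in $[d-\varepsilon,d+\varepsilon]$, so they differ by at most $2\varepsilon$. This gives $(2\varepsilon,p)$-regularity of $(A',B')$, which coincides with $(\varepsilon/\alpha,p)$-regularity precisely when $\alpha\leq 1/2$. You are right to flag this: the lemma as stated is the common shorthand for the slicing lemma, whose careful form concludes $\max(2\varepsilon,\varepsilon/\alpha)$-regularity. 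The triangle-inequality argument you give does not, by itself, yield the sharper $\varepsilon/\alpha$ bound when $\alpha>1/2$; one would need a more delicate averaging argument exploiting that $A'\setminus A''$ also has controlled density (and even then the clean $\varepsilon/\alpha$ constant is not clearly attainable in general). Since the lemma is never invoked with $\alpha$ close to $1$ in this paper, the discrepancy is harmless in context, and your suggestion to replace the conclusion with $\max(2\varepsilon,\varepsilon/\alpha)$-regularity is exactly the right repair if one wants a fully rigorous statement.
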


The Sparse regularity lemma, due to Kohayakawa and R\"odl~\cite{rodlregularity},
states that every upper-uniform graph admits a partition into clusters where most pairs are regular.
We will use the following `colourful' variant which is well-suited for Ramsey-type problems.

\begin{theorem}[Colourful sparse regularity lemma]\label{theorem:regularity}
Given $\varepsilon>0$ and $k_0\in \mathbb{N}$, there are $\eta>0$ and $K_0\ge k_0$ such that the following holds.
For $p\in (0,1)$, let $r\ge 1$ and let $G_1,\dots, G_r$ be $(\eta,p)$-upper-uniform graphs on a common vertex set $V$ of size $n\ge k_0$.
Then there exists a partition $V=V_0\cup V_1\cup \dots \cup V_k$, with $k_0\le k\le K_0$, such that 
\begin{enumerate}
	\item $|V_0|\leq \varepsilon n$,
	\item $|V_i|=|V_j|$ for all $i,j\in[k]$, and
	\item all but at most $\varepsilon \binom k2$ pairs $(V_i,V_j)$ are $(\varepsilon,p)$-regular in $G_t$ for each $t\in [r]$.
\end{enumerate}\end{theorem}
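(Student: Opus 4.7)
The theorem is the multi-graph (``colourful'') variant of the sparse regularity lemma of Kohayakawa and R\"odl, and the natural plan is to extend the standard energy-increment proof of that lemma so that it handles all $r$ graphs at once. All the real technical difficulty already lives in the single-graph sparse case, which is classical; the passage from one graph to $r$ graphs is essentially formal because the relevant potential is additive.

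More precisely, for an equipartition $\mathcal{P}=\{V_1,\dots,V_k\}$ of $V$ and a graph $H$ on $V$, I would introduce the $p$-index
\[
    q_H(\mathcal{P}) = \sum_{i<j}\frac{|V_i||V_j|}{|V|^2}\, d_p(V_i,V_j;H)^2,
\]
and the combined potential $Q(\mathcal{P})=\sum_{t=1}^{r} q_{G_t}(\mathcal{P})$. Choosing $\eta=\eta(\varepsilon,k_0,r)$ small enough, $(\eta,p)$-upper-uniformity of each $G_t$ forces $q_{G_t}(\mathcal{P})$ to be bounded by an absolute constant uniformly over equipartitions whose parts are not too small, so $Q(\mathcal{P})\leq r+o(1)$ is a bounded potential.

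The refinement step then runs as follows. If $\mathcal{P}$ fails condition~(iii), there is some $t\in[r]$ for which at least $\varepsilon\binom{k}{2}$ pairs are $(\varepsilon,p)$-irregular in $G_t$. Running the standard single-graph refinement for $G_t$ only (subdividing each bad pair along its witness of irregularity) produces a refinement $\mathcal{P}'$ of $\mathcal{P}$ with $q_{G_t}(\mathcal{P}')\geq q_{G_t}(\mathcal{P})+c\varepsilon^5$ for some absolute $c>0$. Since refining a partition never decreases any of the indices $q_{G_{t'}}$ (by convexity of $x\mapsto x^2$), we also get $Q(\mathcal{P}')\geq Q(\mathcal{P})+c\varepsilon^5$. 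Iterating at most $O(r/\varepsilon^5)$ times yields a partition simultaneously $(\varepsilon,p)$-regular in all $r$ graphs; a final equitisation (slicing each large part into blocks of equal size and sweeping the leftovers into an exceptional set $V_0$) then delivers conditions~(i)--(ii), with the standard tower-type dependence of $K_0$ on $\varepsilon$, $r$, and $k_0$.

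The main technical obstacle, already present in the single-graph sparse regularity lemma, is controlling the $p$-densities $d_p(V_i,V_j;G_t)$ on potentially small subsets during the refinement process, where in the sparse setting they can \emph{a priori} blow up. Upper-uniformity is precisely what rules this out, both to keep $Q$ bounded and to guarantee that the density-squared jump on bad pairs is genuinely $\Omega(\varepsilon^5)$. Once this is handled for a single graph, the colourful extension demands nothing new: the energy-increment argument is additive over the $r$ graphs, and $Q$ inherits both its upper bound and its refinement-monotonicity from the individual $q_{G_t}$.
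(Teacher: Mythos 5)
The paper does not actually prove \cref{theorem:regularity}; it cites it as a standard ``colourful'' corollary of the Kohayakawa--R\"odl sparse regularity lemma. So there is no paper proof to compare against, but your sketch is the correct and standard route: define the additive $p$-index potential, use upper-uniformity to cap it, refine along witnesses of irregularity, and iterate.

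Two small points deserve attention. First, if condition (iii) fails, a pair $(V_i,V_j)$ is ``bad'' when it is $(\varepsilon,p)$-irregular in \emph{some} colour $t$, so more than $\varepsilon\binom{k}{2}$ pairs are bad in at least one colour; pigeonhole then gives a single colour $t$ with $\geq \varepsilon\binom{k}{2}/r$ irregular pairs, not $\varepsilon\binom{k}{2}$ as you wrote. Either fix the constant, or (cleaner, and the way it is usually done) refine every bad pair simultaneously using the witness from whichever colour exhibits irregularity; then the increment of $Q=\sum_t q_{G_t}$ is still $\Omega(\varepsilon^5)$ and you avoid the loss of $r$ there. Second, your iteration bound $O(r/\varepsilon^5)$ makes $K_0$ depend on $r$, whereas the theorem as stated quantifies $\eta$ and $K_0$ before $r$ is introduced. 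This discrepancy is harmless here since the paper only ever takes $r=2$ (and in fact, for genuinely $r$-coloured, i.e. edge-disjoint, graphs $G_1,\dotsc,G_r$ one can bound $\sum_t d_p(V_i,V_j;G_t)^2$ uniformly in $r$ via upper-uniformity of the union, recovering an $r$-free potential bound); but as a free-standing proof of the statement as written you should either add edge-disjointness as a hypothesis or let $K_0$ depend on $r$, and it is worth flagging this explicitly.
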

The partition $V=V_0\cup V_1 \cup \dotsb \cup V_k$ given by Theorem~\ref{theorem:regularity} is called an \emph{$(\varepsilon,p)$-regular partition}.
Suppose that a graph $G$ admits an $(\varepsilon,p)$-regular partition $V(G)=V_0\cup V_1\cup\dots \cup V_k$. For $d\in (0,1]$, the \emph{$(\varepsilon,p,d)$-reduced graph} $\Gamma$, with respect to this $(\varepsilon,p)$-regular partition of $G$, is the graph with vertex set $V(\Gamma)=\{V_i:i\in[k]\}$, called \textit{clusters}, where $V_iV_j$ is an edge if and only if $(V_i,V_j)$ is an $(\varepsilon,p)$-regular pair with $d_p(V_i,V_j)\ge d$.

We finish this section with an embedding result that finds almost spanning trees in the bipartite graph induced by large subsets of regular pairs. This lemma is a combination of Corollary~12 and Lemma~19 in~\cite{BCS2011}, nonetheless, it can be proved using the tools we develop in the Appendix.
	\begin{lemma}\label{lemma:regular-tree-embedding}
	 Let $D\geq 2$ and $\varepsilon>0$ satisfy $\varepsilon< 1/(4D+6)$. Let $(V_1,V_2)$ be an $(\varepsilon,p)$-regular pair in a graph $G$, and suppose that $d_p(V_1,V_2)\ge\varepsilon$. Let us further suppose that $|V_1|=|V_2|=m$ and let $V_i'\subset V_i$ satisfy  $|V_i'|\ge (2D+4)\varepsilon m$, for $i\in\{1,2\}$. Then $G[V_1',V'_2]$ contains a copy of every tree $T$ with maximum degree at most $D$ and colour classes with at most $|V_i'|-(2D+1)\varepsilon m$ vertices, for $i\in\{1,2\}$ respectively. 
	\end{lemma}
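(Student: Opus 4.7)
The plan is to transfer the regularity hypothesis on $(V_1,V_2)$ into good bipartite-expansion properties on a slightly cleaned-up subgraph of $G[V_1',V_2']$, and then to apply a bipartite Friedman--Pippenger-style tree embedding result (of the flavour of Lemma~\ref{lemma:embedding:expander}) adapted to the bipartite setting, which is exactly the kind of tool developed in Appendix~\ref{appendix:bipartiteextendibility}.

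\emph{Cleanup.} For each $i\in\{1,2\}$, let $B_i\subseteq V_i'$ be the set of vertices $v$ with $d_G(v,V_{3-i}')<(d_p(V_1,V_2)-\varepsilon)p|V_{3-i}'|$. Since $(V_1,V_2)$ is $(\varepsilon,p)$-regular and $|V_{3-i}'|\geq (2D+4)\varepsilon m\geq \varepsilon m$, Lemma~\ref{lemma:regularpairs}(ii) yields $|B_i|\leq \varepsilon m$. Let $W_i:=V_i'\setminus B_i$, so that $|W_i|\geq (2D+3)\varepsilon m$ and every $v\in W_i$ has many neighbours in $V_{3-i}'$, and hence in $W_{3-i}$.

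\emph{Expansion.} The next step is to verify that $G[W_1,W_2]$ is a bipartite expander with good parameters. Concretely, I will show that for every $A\subseteq W_i$ with $|A|\geq \varepsilon m$ we have $|V_{3-i}'\setminus N(A)|<\varepsilon m$: otherwise, regularity would give $e(A,V_{3-i}'\setminus N(A))\geq (d_p(V_1,V_2)-\varepsilon)p|A|\,|V_{3-i}'\setminus N(A)|>0$, contradicting the definition of $N(A)$. Thus every sufficiently large subset of $W_i$ sees almost all of $W_{3-i}$, while the minimum-degree guarantee from the cleanup handles the behaviour of individual vertices on the other side.

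\emph{Embedding.} With these expansion properties in hand, I will invoke the bipartite tree-embedding machinery from Appendix~\ref{appendix:bipartiteextendibility} to place $T$ into $G[W_1,W_2]$. Concretely, one processes $T$ in BFS order from an arbitrary root, maintaining for each unembedded vertex a candidate set in the appropriate side, and using the bipartite expansion to argue that these candidate sets remain of size at least $\varepsilon m$ throughout the procedure. The hypotheses $|V_i'|\geq (2D+4)\varepsilon m$, $|V_i(T)|\leq |V_i'|-(2D+1)\varepsilon m$ and $\varepsilon<1/(4D+6)$ are tuned so that at every step the number of already-used vertices on each side is at most $|V_i'|-(2D+1)\varepsilon m$, leaving $\geq 2D\varepsilon m$ free vertices, and so that the Friedman--Pippenger bookkeeping closes.

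The main obstacle is the small-set regime: the regularity of $(V_1,V_2)$ only provides information for subsets of size $\geq \varepsilon m$, so one must carefully combine the minimum-degree cleanup of Step~1 with the large-set expansion of Step~2 to guarantee that, throughout the greedy F--P-style embedding, no candidate set shrinks below the $\varepsilon m$ threshold, while simultaneously enough free vertices remain on each side to extend. This balancing act is precisely where the specific constants $(2D+4)$, $(2D+1)$ and $1/(4D+6)$ in the statement come from.
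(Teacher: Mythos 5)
The paper itself does not prove this lemma: it defers to Corollary~12 and Lemma~19 of~\cite{BCS2011} and merely remarks that the extendability tools of Appendix~\ref{appendix:bipartiteextendibility} ``can'' be used. Your Steps~1 and~2 are correct and are the standard way to extract bipartite joinedness and large-set expansion from a regular pair. The gap is in Step~3. You propose a BFS embedding that keeps, for each unembedded vertex, a candidate set of size at least $\varepsilon m$. But once the parent of $u$ is placed at a vertex $x$, the candidate set for $u$ necessarily lives inside $N(x)\cap V_{3-i}'$, whose size after the cleanup is only of order $(d_p-\varepsilon)p\,m$; since $d_p$ may be as small as $\varepsilon$ and $p$ may tend to $0$, this is typically far below $\varepsilon m$. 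So the invariant you want to maintain cannot even survive the very first extension, and you never get back inside the regime where regularity gives information. This ``large candidate set'' scheme is the dense-pair embedding argument, and it genuinely breaks in the sparse setting.

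Invoking Corollary~\ref{corollary:extendable:bipartite} as a black box does not fix this: bipartite-extendability of the initial configuration requires $|N(U)|\geq (d-1)|U|$ for \emph{every} set $U$ of size up to $2m'$ (with $m'\approx\varepsilon m$), and a sparse regular pair, even after the degree cleanup, provides no control whatsoever on sets of intermediate size between a single vertex and $\varepsilon m$ — the neighbourhoods of distinct vertices can collapse onto one another without violating regularity. Your final paragraph correctly names the ``small-set regime'' as the obstruction, but the resolution you sketch (balance so that per-vertex candidate sets stay above $\varepsilon m$) does not work. A correct argument in the sparse regime — as in~\cite{BCS2011} — instead keeps the \emph{pool of unused vertices} of size $\Omega(\varepsilon m)$ on each side and, using \cref{lemma:regularpairs}(ii), embeds each new tree vertex into one of the many vertices with degree $\geq (d_p-\varepsilon)p\cdot(\text{pool size})$ into the current pool; bounded degree then guarantees the $\leq D$ children of a vertex can be greedily absorbed from its (small, but nonempty) neighbourhood inside the pool. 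That is a genuinely different bookkeeping than the one you describe.
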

	
\section{Stability} \label{section:stability}

Extremal results in graph theory often have very structured solutions, and it is usually the case that graphs which are in some sense close to a solution must look approximately like an actual solution, a phenomenon which is known as `stability'. The aim of this section is to show a stability result for the Ramsey problem for paths or cycles in the random graph.
That is, every $2$-colouring of the edges of the random graph without a monochromatic cycle, of appropriate size, should be close to a specific type of colouring which we call \textit{extremal colouring}.

\begin{definition}[Extremal colourings]
Let $\alpha\in (0,1)$ and let $G$ be a graph on $N$ vertices.
We call a red-blue colouring of $E(G)$ \textit{$\alpha$-even-extremal} if there exists a partition $V(G)=X\cup Y$ such that  
\begin{enumerate}
    \item $|X| \geq (1-\alpha)2N/3$ and $|Y| \geq (1-\alpha)N/3 $,
    \item $e_R(X)\ge (1-\alpha)e(X)$ and $e_B(X,Y)\geq (1-\alpha)e(X,Y),$
\end{enumerate}
\noindent and \textit{$\alpha$-odd-extremal} if there exists a partition $V(G)=X\cup Y$ such that  
\begin{enumerate}
    \item $|X| \geq (1-\alpha)N/2$ and $|Y| \geq (1-\alpha)N/2 $,
    \item $e_R(X)\ge (1-\alpha)e(X)$, $e_R(Y)\ge (1-\alpha)e(Y)$ and $e_B(X,Y)\geq (1-\alpha)e(X,Y).$
\end{enumerate}
\end{definition}
We observe that if $G$ is a complete graph, then the colourings which show the lower bound on the Ramsey number for cycles are $0$-even-extremal and $0$-odd-extremal, for even and odd cycles, respectively. Our aim in this section is to prove the following stability result. 

\begin{theorem}[Stability lemma]\label{theorem:stability:1}
For every $\alpha\in (0,1)$, there exist positive constants $\delta<\alpha$ and $K$ such that for $p\ge K/N$ and $N\ge (1-\delta) R(C_n)$, the following holds.
For $G=G(N,p)$, w.h.p. every $2$-colouring of $E(G)$ which contains no monochromatic $C_n$ is either $\alpha$-even-extremal if $n$ is even or $\alpha$-odd-extremal if $n$ is odd.
\end{theorem}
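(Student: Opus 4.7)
The plan is to apply the colourful sparse regularity lemma and reduce the problem to a classical stability statement for cycle-free $2$-colourings of almost complete dense graphs. Fix a hierarchy of constants $1/C \ll \eta \ll \varepsilon \ll \delta \ll \alpha$ and set $d = 1/3$. By Lemma~\ref{lemma:upper-uniform}, the random graph $G=G(N,p)$ is w.h.p. $(\eta,p)$-uniform, so both monochromatic subgraphs $G_R$ and $G_B$ of any red-blue colouring are $(\eta,p)$-upper-uniform. Applying Theorem~\ref{theorem:regularity} to the pair $(G_R,G_B)$ yields an $(\varepsilon,p)$-regular partition $V(G)=V_0\cup V_1\cup\dotsb\cup V_k$ with $|V_0|\le\varepsilon N$, equal cluster sizes $m=|V_i|$, and at most $\varepsilon\binom{k}{2}$ pairs failing to be $(\varepsilon,p)$-regular in some colour.

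Next, build an auxiliary $2$-edge-coloured reduced graph $\Gamma$ on $\{V_1,\dotsc,V_k\}$ by placing a red edge $V_iV_j$ whenever $(V_i,V_j)$ is $(\varepsilon,p)$-regular in $G_R$ with $d_{p,R}(V_i,V_j)\ge d$, and symmetrically for blue; an edge of $\Gamma$ may carry both colours. The uniformity hypothesis forces $d_{p,R}(V_i,V_j)+d_{p,B}(V_i,V_j)\ge 1-\eta$ on every regular pair, so each such pair receives at least one colour, and the underlying simple graph of $\Gamma$ has at least $(1-\varepsilon)\binom{k}{2}$ edges. The crucial observation is that $\Gamma$ cannot contain long monochromatic cycles: if $\Gamma$ has a, say, red cycle $V_{i_1}\dotsb V_{i_t}$ with $tm \ge (1+\varepsilon)n$, then we can iteratively apply Lemma~\ref{lemma:regular-tree-embedding} in each consecutive red-regular pair to splice together paths of prescribed lengths whose concatenation forms a red cycle of length exactly $n$ in $G$, contradicting the hypothesis.

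Because $N\ge (1-\delta)R(C_n)$, this forbidden-cycle condition puts $\Gamma$ into the regime of a $2$-edge-colouring of a near-complete graph on $k$ vertices that avoids monochromatic cycles of length around $kn/N$, which is just below the Ramsey threshold. A classical stability theorem for cycle-free Ramsey colourings of dense graphs, in the spirit used by Letzter~\cite{L2016} and Krivelevich--Kronenberg--Mond~\cite{KKM2020} (ultimately going back to Faudree--Schelp), now applies to $\Gamma$ and produces a partition $V(\Gamma)=\mathcal{X}\cup\mathcal{Y}$ exhibiting the appropriate extremal structure: for odd $n$, $|\mathcal{X}|,|\mathcal{Y}|\approx k/2$ with almost all edges inside each of $\mathcal{X}$, $\mathcal{Y}$ red and almost all edges across blue; for even $n$, $|\mathcal{X}|\approx 2k/3$, $|\mathcal{Y}|\approx k/3$, with almost all edges inside $\mathcal{X}$ red and almost all edges between $\mathcal{X}$ and $\mathcal{Y}$ blue.

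Finally, lift this structure back to $G$ by setting $X=\bigcup_{V_i\in\mathcal{X}} V_i$ and $Y=V_0\cup\bigcup_{V_i\in\mathcal{Y}} V_i$, absorbing the exceptional set into $Y$. The size bounds on $|\mathcal{X}|,|\mathcal{Y}|$ translate to those required for $|X|,|Y|$ up to an $\varepsilon N$ error. For the edge conditions, the $(\eta,p)$-uniformity of $G$ gives $e(X)=(1\pm\eta)p\binom{|X|}{2}$ and $e(X,Y)=(1\pm\eta)p|X||Y|$, while summing the red and blue density bounds over the correctly coloured edges of $\Gamma$, together with the observation that non-regular pairs and wrong-colour pairs contribute at most $O(\varepsilon pN^2)$ edges, yields $e_R(X)\ge(1-\alpha)e(X)$, $e_B(X,Y)\ge(1-\alpha)e(X,Y)$, and, in the odd case, also $e_R(Y)\ge(1-\alpha)e(Y)$. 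The main technical obstacle is the cycle-lifting step: producing a monochromatic $C_n$ of exactly the right length from a long reduced monochromatic cycle requires engineering the subpath lengths inside each regular pair so they add to precisely $n$, and the constant hierarchy must be set so that the slack $tm - n$ dominates the rounding errors from the partition and the tolerance of Lemma~\ref{lemma:regular-tree-embedding}.
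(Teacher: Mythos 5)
Your Steps 1 and 2 — applying the colourful sparse regularity lemma, building a coloured reduced graph $\Gamma$, and invoking a stability theorem for Ramsey colourings of near-complete graphs — are in the same spirit as the paper's argument (the paper uses Jenssen--Skokan for odd $n$ and Gy\'arf\'as--S\'ark\"ozy--Szemer\'edi combined with Letzter's cleaning for even $n$). However, Step 3, the lifting of the extremal structure from $\Gamma$ back to $G$, has a genuine gap.

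The problem is that an edge $V_iV_j$ being red in $\Gamma$ only certifies a \emph{lower} bound $d_{p,R}(V_i,V_j)\ge d$; it says nothing that rules out a comparable blue density on the same pair. With your choice $d=1/3$ and the uniformity identity $d_{p,R}+d_{p,B}\approx 1$, a pair that carries both colours of $\Gamma$ may have red density as low as $1/3$. Thus even if the stability theorem tells you that almost every pair inside $\mathcal{X}$ is red in $\Gamma$, ``summing the red density bounds'' only gives $e_R(X)\gtrsim d\cdot e(X)$, which is far from the required $(1-\alpha)e(X)$. There is no way to get the $1-\alpha$ factor by density bookkeeping on $\Gamma$ alone. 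The paper handles this by arguing \emph{by contradiction}: it shows (Claims 3.10--3.12) that if, say, $e(G_B[X'])\ge\tfrac{\alpha}{2}e(G[X'])$, then $\Gamma_B[X]$ must itself contain $\Omega(\alpha k^2)$ edges, and combining these with the near-complete bipartite blue structure between $X$ and $Y$ one builds a blue odd connected matching of size strictly larger than the extremal bound; Proposition~\ref{prop:cycles-in-blowups} then yields a blue $C_n$, contradicting the hypothesis. This extra ``long connected matching from an excess of wrong-coloured edges'' argument is the ingredient your proof is missing.

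A second, smaller gap is in the cycle-lifting sub-step. You propose to obtain a monochromatic $C_n$ from a long monochromatic cycle in $\Gamma$ by ``iteratively applying Lemma~\ref{lemma:regular-tree-embedding} \ldots to splice together paths of prescribed lengths.'' In a sparse regular pair one cannot require a path to end at a prescribed vertex, so the concatenation of consecutive paths has no guaranteed connecting edge. The paper circumvents this with double-brooms (Claims~\ref{claim:brooms-in-the-matching} and~\ref{claim:connecting-brooms} inside Proposition~\ref{prop:cycles-in-blowups}): each embedded piece is a path with small binary trees attached at both ends, giving $\Theta(\varepsilon m)$ candidate endpoints so that regularity supplies a connecting edge, and the broom heights absorb the $\Theta(\log m)$ diameter that the connecting walks through $\Gamma$ can introduce. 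You acknowledge the issue in passing (``engineering the subpath lengths'') but do not supply the mechanism, and plain path-splicing does not in fact work here.
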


The proof of Theorem~\ref{theorem:stability:1} relies on the regularity method for random graphs. Given a $2$-edge colouring of $G(N,p)$, we apply the regularity lemma to obtain a reduced graph, which will be a $2$-edge-coloured almost-complete graph.
For odd cycles, we use a stability result due to Jenssen and Skokan~\cite{JS2021} to show that the reduced graph either contains a large monochromatic odd cycle or is extremal.
For even cycles, using a result of Letzer~\cite{L2020}, we deduce that the reduced graph either contains a large monochromatic \textit{connected matching} or it is extremal. In both cases, if the reduced graph is non-extremal, using the monochromatic structure that we found in the reduced graph we can embed an $n$-vertex monochromatic cycle in $G(N,p)$. Otherwise, one can show that the reduced graph being extremal implies that the colouring of the edges of $G(N,p)$ should be extremal as well.

\subsection{Stability in almost complete graphs}
In this subsection, we rely on previously established stability results for the Ramsey problem for cycles.
We start with a lemma that appears, in a way more general form, in the work of Jenssen and Skokan~\cite[Theorem 7.4]{JS2021} on the multicolour Ramsey numbers of odd cycles, and is in perfect shape for our applications.

 \begin{proposition}[Stability: odd case]
    \label{proposition:stability:odd}
    For every $\alpha \in (0,1)$, there exists $\delta>0$ such that the following holds for every sufficiently large odd $n$. Let $G$ be a graph with $N\geq (2-\delta)n$ vertices and at least $(1-\delta)\binom{N}{2}$ edges.
    Then every $2$-colouring of $E(G)$ with no monochromatic copy of $C_n$ is $\alpha$-odd-extremal.
\end{proposition}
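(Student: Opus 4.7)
The plan is to reduce the statement to the two-colour specialisation of Jenssen and Skokan's multicolour stability theorem for odd cycles, \cite[Theorem 7.4]{JS2021}, and to handle the small gap between their setting (complete host graphs) and ours (host graphs with only $(1-\delta)\binom{N}{2}$ edges). Specialised to two colours, their result states that any edge-colouring of $K_N$ with $N\ge (2-\delta')n$ and no monochromatic copy of $C_n$ (for odd $n$ sufficiently large) admits a partition $V=X\cup Y$ with $|X|,|Y|\ge (1-\alpha)N/2$ in which almost all edges inside each part share one colour and almost all edges between them share the other. Identifying the former colour with red yields exactly our $\alpha$-odd-extremal conclusion.

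To bridge the gap, first I would clean the graph: iteratively remove from $G$ any vertex of degree less than $(1-\sqrt{\delta})N$. Since the total number of non-edges is at most $\delta\binom{N}{2}$, this removes at most $\sqrt{\delta}N$ vertices and leaves an induced subgraph $G'$ on $N'\ge (1-\sqrt{\delta})N \ge (2-2\sqrt{\delta})n$ vertices whose missing edges form a graph of very small maximum degree. Next, I would complete $G'$ to $K_{N'}$ by colouring each missing edge arbitrarily. The resulting $2$-coloured $K_{N'}$ may introduce new monochromatic copies of $C_n$, so \cite[Theorem 7.4]{JS2021} is not immediately applicable; the crucial point is to verify that its proof is robust to a small fraction of arbitrarily coloured edges, since the argument proceeds by counting in a structural/reduced graph which absorbs such an error. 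Given the resulting partition $V(G')=X'\cup Y'$ with extremal parameter $\alpha/2$, I then extend to $V(G)$ by assigning each removed vertex $v\in V(G)\setminus V(G')$ to whichever side maximises its contribution to the extremal red/blue counts; the total contribution of these $\le \sqrt{\delta}N$ vertices is at most $\sqrt{\delta}N^2$, which is absorbed into the final $\alpha$ error.

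The main obstacle is precisely verifying the robustness of Jenssen-Skokan's theorem to the completion step. If instead their proof relied on the host being literally complete, a direct alternative is to argue structurally in $G$ itself: one shows the blue subgraph $B\subseteq G$ must be close to bipartite (since it contains no $C_n$ and is dense) via a stability version of Bondy's pancyclic theorem, obtains a partition $V=X\cup Y$ with few blue edges inside each part, and concludes from the density of $G$ that most edges inside $X$ and $Y$ must therefore be red. Bounding $|X|,|Y|$ from below then uses that the red subgraph cannot have a component of size $\ge n$, since otherwise a near-clique of that size inside $G[X]$ or $G[Y]$ would already contain $C_n$. Either route leads to the same conclusion after a routine parameter chase.
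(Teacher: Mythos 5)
Your principal route misses the key fact the paper relies on: Jenssen and Skokan's Theorem~7.4 already applies to graphs of density $1-\delta$, not only to complete graphs. The paper therefore obtains Proposition~\ref{proposition:stability:odd} as a direct specialisation of that theorem and gives no further argument. (The block labelled ``Proof of Proposition~\ref{proposition:stability:odd}'' in the paper is a mislabel; its content --- connected matchings, $\approx 3n/2$ vertices, Lemmas~\ref{lemma:stab-paths} and \ref{lemma:almost-to-complete} --- is the proof of the \emph{even}-case Proposition~\ref{proposition:stability:even}.)

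Because you assume Jenssen--Skokan only handles complete hosts, you insert a completion step, and that step has a genuine gap which you yourself flag: after colouring the missing edges of $G'$ arbitrarily, the resulting $2$-coloured $K_{N'}$ may acquire new monochromatic copies of $C_n$, so the hypothesis of the theorem fails and it cannot be applied as stated. ``Verify the proof is robust to a small fraction of arbitrarily coloured edges'' is precisely the hard part, not a routine check. The paper does carry out exactly this almost-complete-to-complete reduction, but only in the even case, and there it leans on a nontrivial black box, Letzter's transfer result (Lemma~\ref{lemma:almost-to-complete}), which constructs a valid colouring of $K_{N''}$ with \emph{no} monochromatic connected matching, rather than colouring the missing edges arbitrarily. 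You would need an analogous tool for odd cycles; none is supplied. Your alternative ``structural'' route is only sketched: the stability form of Bondy's pancyclicity you invoke, and the claim that a large red component must contain a near-clique yielding $C_n$, are both substantial steps that would need proof, so this does not close the gap either.
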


For the even case, previous results do not exactly fit our intentions.
For this reason, we have to keep an eye for a monochromatic \textit{connected matching}, which is just a matching whose edges all have the same colour and all its vertices are in the same connected component.
We will say that a connected matching has $k$ vertices to specify the size of the matching instead of the whole component. Crucially, finding a large connected matching in the reduced graph of a given regular partition is enough for embedding even cycles of an appropriate size.

 \begin{proposition}[Stability: even case]
    \label{proposition:stability:even}
    For every $B>3/2$ and $\alpha \in (0,1)$, there exists $\delta>0$ such that the following holds for every sufficiently large $n$. Let $N\in\mathbb N$ satisfy $ (3/2-\delta)n\leq N\leq Bn$, and let $G$ be an $N$-vertex graph with at least $(1-\delta)\binom{N}{2}$ edges. Then every $2$-colouring of $E(G)$ either contains a monochromatic connected matching with at least $n$ vertices or is $\alpha$-even-extremal.
\end{proposition}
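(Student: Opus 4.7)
The plan is to combine Letzter's Ramsey-type result~\cite{L2020} for monochromatic connected matchings on $n$ vertices in $2$-coloured $K_M$ with a tailored stability argument: when our graph has $N$ close to, but slightly below, the Ramsey threshold, the absence of a monochromatic connected matching on $n$ vertices should force the extremal bipartition. I would set $\delta \ll \min\{\alpha, B - 3/2\}$ and first clean up $G$ by colouring each non-edge arbitrarily, producing a $2$-colouring of $K_N$. Since only $\delta \binom{N}{2}$ edges are added, an $(\alpha/2)$-even-extremal partition of $K_N$ is still $\alpha$-even-extremal for $G$, and a monochromatic connected matching in $K_N$ using only original edges stays in $G$; hence it suffices to prove the statement for $K_N$.

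Assume then that the $2$-colouring of $K_N$ has no monochromatic connected matching on $n$ vertices. The first move is structural: every monochromatic component has matching number strictly less than $n/2$, so the Gallai--Edmonds decomposition produces, inside each large monochromatic component, a large colour-independent set whose edges to the rest of the component are constrained. I would then take $X$ to be the vertex set of the largest red-connected component and $Y := V \setminus X$. The size conditions on $|X|$ and $|Y|$ should follow by contrapositive: if either set were badly out of range, I would extract an auxiliary subgraph on roughly $(3/2 + \varepsilon)n$ vertices carrying a dense bichromatic colouring to which Letzter's theorem applies, producing a forbidden monochromatic connected matching. The colour-count conditions $e_R(X) \geq (1-\alpha)e(X)$ and $e_B(X,Y) \geq (1-\alpha)e(X,Y)$ are handled similarly: a significant excess of blue inside $X$, or of red across $(X,Y)$, can be spliced with the ambient monochromatic component structure on either side to enlarge the relevant matching past $n$ vertices.

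The hard part will be this stability step — converting local failures of extremality into dense $2$-coloured auxiliary hosts on at least $\lceil 3n/2 \rceil$ vertices suitable for Letzter's theorem, given that we only have $N \approx (3/2 - \delta)n$ vertices to begin with. The manoeuvre is to leverage both the near-completeness of $G$ and the Gallai--Edmonds `defect' of each monochromatic component: these should let us merge substructures (for example, one large monochromatic component together with unabsorbed vertices, or a red-sparse subset together with its blue neighbourhood) into a dense $(3/2 + \varepsilon)n$-vertex subgraph whose $2$-colouring inherits the original one. Keeping track of all error terms — the $\delta \binom{N}{2}$ artificially added edges, the $\varepsilon$ slack in Letzter's theorem, and the $\alpha$ tolerance in the extremal definition — then requires a hierarchy $\delta \ll \varepsilon \ll \alpha$ together with $\delta \ll B - 3/2$, and this bookkeeping is where I expect most of the work to go.
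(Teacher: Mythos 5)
The central reduction in your first paragraph is flawed. Filling in the $\le \delta\binom{N}{2}$ missing edges with arbitrary colours and passing to $K_N$ only works in one direction: if the resulting $K_N$-colouring is $(\alpha/2)$-even-extremal, then indeed $G$ inherits $\alpha$-even-extremality (few edges were added). But in the other case — a monochromatic connected matching on $n$ vertices in $K_N$ — you have no control over whether that matching avoids the artificially added edges, and you never address this. Saying ``a monochromatic connected matching in $K_N$ using only original edges stays in $G$'' is true but vacuous; there is no reason the matching you get should use only original edges, so ``hence it suffices to prove the statement for $K_N$'' does not follow. Adding edges can create monochromatic connected matchings out of nothing, so the reduction runs the wrong way. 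To make a reduction to complete graphs work, one needs the deeper direction of Letzter's result (\cref{lemma:almost-to-complete}): starting from a graph with \emph{no} monochromatic connected matching and only $O(\sqrt{\delta}n)$ non-neighbours per vertex, one can \emph{build} a colouring of a complete graph on slightly fewer vertices that still has no such matching and that contains a large induced coloured subgraph of the original. That step is nontrivial and you cannot replace it with ``colour the missing edges arbitrarily''. The paper therefore first iteratively removes vertices of high non-degree (so that the $O(\sqrt{\delta}n)$ non-neighbour condition holds), then invokes \cref{lemma:almost-to-complete}, and only then applies a complete-graph stability result.

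Your second paragraph, where you attempt a from-scratch stability argument on $K_N$ via Gallai--Edmonds decompositions and the largest red component, is also not carried out; you acknowledge this is ``where most of the work would go''. The paper sidesteps this entirely by observing that no monochromatic connected matching on $n$ vertices implies no monochromatic $P_{n-1}$, and then citing the existing stability theorem for paths of Gyárfás, Sárközy and Szemerédi (\cref{lemma:stab-paths}). Even if a Gallai--Edmonds approach could in principle be made to work, it is a substantially larger undertaking than you indicate, and as written it remains a sketch with the key structural steps unproved. So there are two genuine gaps: the incorrect completion-to-$K_N$ reduction, and the unfinished stability argument replacing what the paper handles by citation.
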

We will deduce Proposition~\ref{proposition:stability:even} from a stability result for the Ramsey problem for paths, proved by Gy\'arf\'as, S\'ark\"ozy and Szemer\'edi~\cite{GSS2009}.

 \begin{lemma}
    \label{lemma:stab-paths}
    For every $\alpha \in (0,1)$, there exists $\delta>0$ such that the following holds for every sufficiently large $n$. If $N\geq(3/2-\delta)n$, then every $2$-colouring of $E(K_N)$ either contains a monochromatic copy of $P_n$ or is $\alpha$-even-extremal.
\end{lemma}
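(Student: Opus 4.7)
The plan is to combine the classical fact that some colour class of $K_N$ is connected with a stability version of the Erd\H{o}s--Gallai theorem for $P_n$-free graphs. Let $R$ and $B$ denote the red and blue subgraphs of the colouring, and assume for contradiction that neither contains a copy of $P_n$.

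First, I would use the observation that in any $2$-edge-colouring of $K_N$ at least one colour class spans a connected subgraph on $V(K_N)$: indeed, if $R$ has two components, then the complete bipartite graph between them lies in $B$. Renaming colours if necessary, assume $B$ is connected. Since $B$ is connected and $P_n$-free on $N$ vertices, the Erd\H{o}s--Gallai theorem gives $e(B)\leq (n-2)N/2$; the same bound applied to $R$, together with $e(R)+e(B)=\binom{N}{2}$ and $N\geq(3/2-\delta)n$, forces the edge counts of both colour classes to lie within $O(\delta n^2)$ of the Erd\H{o}s--Gallai extremum, and pins $N$ close to the Ramsey threshold $3n/2-1$.

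Second, I would invoke a Kopylov-type stability refinement of Erd\H{o}s--Gallai for connected $P_n$-free graphs: such a graph with near-extremal edge count must be structurally close to a \emph{book}, namely there exists a set $Y$ of size about $(n-1)/2$ completely joined to its complement $X := V\setminus Y$, while the induced graph on $X$ carries almost no edges of the dominant colour. Translating this back, $|X|\approx n-1$ and $|Y|\approx (n-1)/2$, and almost every edge between $X$ and $Y$ is blue. Since $R$ is also $P_n$-free, any substantial collection of blue edges inside $X$ could be concatenated with the `book' structure to produce a long blue path, contradicting $B$ being $P_n$-free; hence almost all edges inside $X$ are red, and the partition $V = X \cup Y$ witnesses $\alpha$-even-extremality.

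The main obstacle will be the quantitative aspect of the Kopylov-type stability step: converting an $O(\delta n^2)$ edge deficit into explicit bounds on $|X|$, $|Y|$, and the colour densities inside $X$ and across $(X,Y)$ matching the thresholds in the definition of $\alpha$-even-extremal. This requires a sufficiently sharp (linear) quantitative stability statement, together with a final redistribution of $O(\delta N)$ exceptional vertices between $X$ and $Y$ to meet the size bounds exactly. A secondary subtlety is the case when both $R$ and $B$ are disconnected, so the initial reduction does not directly apply; here, however, Erd\H{o}s--Gallai applied to each component forces every component of each colour to have fewer than $n$ vertices, and the extremal partition emerges directly from the component structure.
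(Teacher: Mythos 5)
The paper does not prove Lemma~\ref{lemma:stab-paths}: it is quoted directly from Gy\'arf\'as, S\'ark\"ozy and Szemer\'edi~\cite{GSS2009}, so there is no internal proof to compare against. Still, your proposed argument has a concrete gap that would need fixing. The Erd\H{o}s--Gallai counting in your second step does not do what you claim. With $N = (3/2-\delta)n$, the sum of the two bounds $e(R), e(B) \le (n-2)N/2$ exceeds $\binom{N}{2}$ by
\[
(n-2)N - \binom{N}{2} = \frac{N\,(2n-3-N)}{2} \approx \frac{3n}{2}\cdot\frac{n/2}{2} = \frac{3n^2}{8},
\]
a quantity of order $\Theta(n^2)$, not $O(\delta n^2)$. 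So the edge counts of the two colour classes are \emph{not} forced to lie near the Erd\H{o}s--Gallai (or Kopylov) extremum, and the premise needed for the Kopylov-type stability lemma in your third step is simply unavailable. In fact, one can see this must fail directly: in the genuine extremal Ramsey colouring (with $|X|=n-1$ and $|Y|\approx n/2$), the number of blue edges can be anywhere between roughly $n^2/2$ and $5n^2/8$ depending on how the edges inside $Y$ are coloured, so the edge count of a single colour class carries no extremal signal.

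This is not just a quantitative slip; it reflects the real difficulty of the statement. Because the sum of single-colour Tur\'an-type bounds is not tight at $N\approx 3n/2$, any correct argument must exploit the \emph{interaction} between the two colour classes and not merely add up two independent edge bounds. The ``secondary subtlety'' also needs care: Erd\H{o}s--Gallai bounds \emph{edges}, not component sizes, so $P_n$-freeness of a colour class does not force its components to have fewer than $n$ vertices (a long path or a deep tree is $P_n$-free on many vertices). The observation that should replace it is that any two ``large'' components of one colour induce a complete bipartite graph in the other colour, which immediately yields a monochromatic $P_n$; this correct reasoning limits the number of large components per colour to one. That kind of two-colour structural bookkeeping, not single-graph edge stability, is the engine of the Gy\'arf\'as--S\'ark\"ozy--Szemer\'edi proof that the paper cites.
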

\cref{lemma:stab-paths} applies only to complete graphs; thankfully, we can apply machinery developed by Letzter~\cite[Theorem 2.1]{L2020} to circumvent this issue.
We slightly simplify its statement for our purposes.
\begin{lemma}
    \label{lemma:almost-to-complete}
       Let $B>1$, let $\varepsilon>0$ be sufficiently small and let $N\leq Bn$. Then there exists a constant $\delta=\delta(\varepsilon,B)>0$ such that the following holds.
       
       Suppose that $G$ is a $2$-coloured graph on at least $N+\varepsilon n$ vertices such that every vertex has at most $\delta n$ non-neighbours, and further suppose that $G$ has no monochromatic connected matching on at least $n$ vertices. Then there exists a $2$-colouring $G'$ of $K_N$ with no monochromatic connected matching on at least $n$ vertices. Moreover, there exists such $G'$ so that it contains an induced subgraph of $G$ on $N$ vertices.
\end{lemma}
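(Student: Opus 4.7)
The plan is to select a set $S\subseteq V(G)$ with $|S|=N$, keep the given colouring on $G[S]$, and extend it to a $2$-edge-colouring of $K_N$ on vertex set $S$ by assigning a colour to every non-edge of $G[S]$, in such a way that no monochromatic connected matching on $\ge n$ vertices is created. By construction the colouring then contains $G[S]$ as an induced coloured subgraph on $N$ vertices, which is exactly the ``moreover'' clause.

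I set up the framework as follows. For each colour $c\in\{R,B\}$ let $\mathcal{C}_c$ denote the family of connected components of the $c$-coloured subgraph of $G$; by hypothesis, each component of $\mathcal{C}_R\cup\mathcal{C}_B$ has matching number strictly less than $n/2$. Every vertex $v$ is labelled by the pair $(C_R(v),C_B(v))\in\mathcal{C}_R\times\mathcal{C}_B$ of components containing it. A non-edge $uv$ is \emph{safe in colour $c$} if $C_c(u)=C_c(v)$, since colouring $uv$ with $c$ does not merge two distinct $c$-components, and \emph{risky in colour $c$} otherwise.

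To choose $S$, I use the slack $|V(G)|\ge N+\delta n$ together with the per-vertex non-neighbour bound $C\sqrt{\delta}\,n$. An averaging/greedy deletion of $\delta n$ vertices produces $S$ with $|S|=N$ such that, for every fixed pair of components in $\mathcal{C}_R\times\mathcal{C}_R$ (and similarly in $\mathcal{C}_B\times\mathcal{C}_B$), the number of risky non-edges of $G[S]$ joining them is much smaller than $n$. This is achievable because the total number of non-edges of $G[S]$ is at most $|S|\cdot C\sqrt{\delta}\,n/2$, and the deletion budget $\delta n$ absorbs the few component pairs that would be overrepresented.

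I then colour the non-edges of $G[S]$ greedily, one by one. For the current non-edge $uv$, I assign a colour $c\in\{R,B\}$ such that the $c$-component containing $\{u,v\}$ in the running colouring still has matching number strictly less than $n/2$. If no such colour existed, then adding one more edge of either colour would push some monochromatic component past the threshold $n/2$; unwinding the cascade of merges that produced that component and using the bound on the number of risky non-edges between any fixed pair of components (from the choice of $S$) exhibits a monochromatic connected matching on $\ge n$ vertices already inside $G$, a contradiction. Safe non-edges can always be coloured. The main obstacle is precisely this last step: controlling how the greedy decisions compose over time and showing that the quantitative balance between the slack $\delta n$ and the per-vertex non-neighbour bound $C\sqrt{\delta}\,n$ is always enough to leave at least one valid colour available, which is essentially the content of the corresponding argument of Letzter~\cite{L2020}.
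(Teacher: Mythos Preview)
The paper does not give a proof of this lemma at all: it is stated as a simplified form of Letzter's \cite[Theorem~1.5]{L2020} and used as a black box. So there is no in-paper argument to compare against; what matters is whether your proposal stands on its own.

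It does not. You yourself concede at the end that the ``main obstacle'' is exactly the step you do not carry out, and that it is ``essentially the content of the corresponding argument of Letzter.'' That makes the proposal a sketch deferring to the same reference the paper already cites, not an independent proof.

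Beyond that, the sketch has two concrete gaps. First, the selection of $S$: you claim that deleting $\delta n$ vertices can force, for \emph{every} pair of same-colour components, the number of risky non-edges between them to be ``much smaller than $n$.'' No argument is given, and it is not clear why an averaging over $\delta n$ deletions (against a total of $\Theta(\sqrt{\delta}\,n\cdot N)$ non-edges) should control \emph{all} pairs simultaneously; nor is it clear this is the correct invariant to aim for. Second, and more seriously, the greedy colouring step does not work as written. When a risky non-edge is coloured $c$, two $c$-components merge, and the matching number of the union can be roughly the \emph{sum} of the two matching numbers; a handful of coloured non-edges can therefore push a component well past the threshold $n/2$. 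Your contradiction claim---that ``unwinding the cascade of merges'' would locate a monochromatic connected matching on $\ge n$ vertices already inside $G$---is unsupported: the large matching witnessed in the merged component may (and typically will) use the non-edges you have just coloured, which are not edges of $G$. Letzter's actual argument is not a naive greedy assignment; it analyses the component structure much more carefully before deciding how to colour the missing edges.
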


With these tools at hand, we are now ready to prove stability for the even case. 

\begin{proof}[Proof of Proposition~\ref{proposition:stability:even}] Let $1/n\ll\delta\ll\delta'\ll \varepsilon'\ll\alpha,1/B$.
Let $N$ satisfy $(3/2 - \delta)n \leq N \leq Bn$ and let $G=(V,E)$ be a $2$-coloured $N$-vertex graph with at least $(1-\delta)\binom{N}{2}$ edges and no monochromatic connected matching on $n$ vertices.
We shall show that the colouring is $\alpha$-even-extremal.

Letting $W\subset V(G)$ be the set of vertices with more than $\delta' n$ non-neighbours in $G$, we get
\[|W|\le \frac{\delta N^2}{\delta 'n}\le \frac{\delta B^2n}{\delta'}\le \varepsilon'n,\]
provided $\delta$ is sufficiently small with respect to $\delta', \varepsilon', 1/B$.
Let $G'=G-W$ and note that every vertex in $G'$ has at most $\delta' n$ non-neighbours.
Let $N'' = |V(G)| - |W| - \varepsilon' n$.
Then, by Lemma~\ref{lemma:almost-to-complete} (applied with $N'', \varepsilon', \delta'$ playing the roles of $N, \varepsilon, \delta)$, we get an induced coloured subgraph $G''\subseteq G'$ on 
$N'' \geq (3/2 - 3 \varepsilon')n$ vertices which is contained in a $2$-colouring $G^*$ of $K_{N''}$ with no monochromatic connected matching on $n$ vertices.
In particular, this implies that $G^*$ contains no monochromatic copy of $P_{n-1}$, and therefore, by Lemma~\ref{lemma:stab-paths}, it is $\alpha/2$-even-extremal.
Since $G''$ is an induced graph of $G'$ and $|V(G)\setminus V(G'')|\le2\varepsilon' n\ll \alpha N$, it is easy to see that $G$ is $\alpha$-even-extremal.
\end{proof}

\subsection{Cycles and connected matchings}

In this subsection, we work with the structure in the reduced graph that will allow us to embed large cycles. Recall that a {connected matching} on $k$ vertices is a $k$-vertex matching such that all its vertices belong to the same connected component. If this connected component is non-bipartite, we call it an \textit{odd connected matching}. This concept was introduced by \L uczak~\cite{Luczak1999} to tackle cycle embedding problems in combination with the Regularity Lemma. Here we combine this approach with tree embedding results due to Balogh, Csaba, and Samotij~\cite{BCS2011} to find cycles in the sparse regime. We remark that this idea has also been used by Kronenberg, Krivelevich, and Mond~\cite{KKM2020} to study the resilience properties of long cycles in sparse random graphs.

We start with a property of odd connected matchings that will play an important role in our argument.

\begin{fact}
\label{fact:good-walk}Suppose that $u$ and $v$ are vertices in an odd connected matching in a graph $H$. Then, there exists a sequence of vertices $u= w_1, w_2,\dots, w_t= v$ such that, for $i\in[t-1]$, $w_iw_{i+1}\in E(H)$ and each vertex in $H$ appears at most three times in the sequence. Moreover, we can find such a walk with the number of vertices being of any given parity. We will refer to such sequence as a \emph{good even-walk} or a \emph{good odd-walk} between $u$ and $v$, depending on its parity.\end{fact}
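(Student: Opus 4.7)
The plan is to work entirely inside the connected component $K$ of $H$ that contains the odd connected matching. By hypothesis, $K$ is non-bipartite, so it contains vertices of $u,v$ and an odd closed walk through any prescribed vertex. The target walk will be obtained from a BFS tree rooted at $u$ plus, if necessary, one controlled detour through an odd cycle to fix the parity.

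First, I would build a BFS tree $T$ of $K$ rooted at $u$ and let $P_v$ denote the unique tree path from $u$ to $v$. This is already a walk from $u$ to $v$ in which every vertex appears at most once, and its number of vertices is $|P_v|+1$. If this parity matches the prescribed one, we are done with a walk of multiplicity one, which is trivially good.

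Otherwise, the key step is to produce a walk of the opposite parity while keeping all multiplicities at most $3$. Because $K$ is non-bipartite, not all edges of $K$ can connect vertices whose BFS distance from $u$ has different parity, so there must exist an edge $xy\in E(K)$ with $d_T(u,x)$ and $d_T(u,y)$ of the same parity (necessarily equal, in fact, since BFS distances from $u$ along any edge differ by at most $1$). Let $\ell$ be the lowest common ancestor of $x$ and $y$ in $T$, let $P_\ell$ be the tree path from $u$ to $\ell$, and let $P_{\ell x}$, $P_{\ell y}$ be the tree paths from $\ell$ to $x$ and to $y$ respectively. I would then take the walk
\[
W \colon \quad u \xrightarrow{P_\ell} \ell \xrightarrow{P_{\ell x}} x \to y \xrightarrow{P_{\ell y}^{-1}} \ell \xrightarrow{P_\ell^{-1}} u \xrightarrow{P_v} v.
\]
A direct edge count gives that $W$ has $|P_x|+|P_y|+|P_v|+2$ vertices. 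Since $|P_x|$ and $|P_y|$ have the same parity by choice, this is congruent to $|P_v|\pmod 2$, i.e.\ the opposite parity of the direct walk $P_v$. So between $P_v$ and $W$ we achieve both parities.

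What remains is the multiplicity bound, and this is the main (but routine) bookkeeping step. I would split vertices into cases according to where they lie in $T$: ancestors of $\ell$ on $P_\ell$, descendants of $\ell$ lying on $P_{\ell x}\cup P_{\ell y}$, and the rest. The crucial structural remarks are that strict ancestors of $\ell$ cannot lie on $P_{\ell x}\cup P_{\ell y}$ (since those are descendants of $\ell$), and that $P_{\ell x}$ and $P_{\ell y}$ meet only at $\ell$ by definition of the LCA. Using these facts, each strict ancestor of $\ell$ on $P_\ell$ is visited twice by the $P_\ell,P_\ell^{-1}$ portions and at most once more by $P_v$; the vertex $\ell$ is visited twice by the transitions through $\ell$ and at most once more by $P_v$; each non-root vertex on $P_{\ell x}\cup P_{\ell y}$ is visited once, plus at most once on $P_v$; and $u$ appears exactly at the endpoints of the two loops, at most twice. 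In every case the count is at most $3$, as required. The case $u=v$ is handled identically: $P_v$ is the trivial walk $(u)$ with one vertex, while $W$ becomes an odd closed walk at $u$ of even vertex-length, so both parities are again available and the multiplicity analysis is unchanged.
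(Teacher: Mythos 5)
Your argument is correct, but it takes a concretely different route from the paper's one-sentence sketch. The paper fixes an odd cycle $C$ in the non-bipartite component containing the matching, takes a path from $u$ to $C$, a path from $v$ to $C$, and joins them by an arc of $C$ chosen to fix the parity (the two arcs between the landing points have opposite parities because $|C|$ is odd); since the good walk is a concatenation of three simple paths, each vertex is visited at most three times. You instead root a BFS tree at $u$, take the tree path $P_v$ for one parity, and for the other parity use a cross edge $xy$ with $d_T(u,x)=d_T(u,y)$ to build the detour $u\to\ell\to x\to y\to\ell\to u\to v$ through the LCA $\ell$. Structurally, your walk corresponds to the degenerate case of the paper's construction in which both paths from $u$ and $v$ meet the odd cycle at the same vertex $\ell$, except that you route through the root $u$ twice rather than hitting $C$ at two different points. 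The BFS framework makes the parity computation and the multiplicity bookkeeping entirely explicit (strict ancestors of $\ell$ get $2+1$ visits, $\ell$ gets $2+1$, everything in $P_{\ell x}\cup P_{\ell y}$ gets $1+1$, the rest of $P_v$ gets $1$), which is a genuine advantage over the paper's terse ``we use three paths'' argument. The trade-off is that your construction always traverses the full odd cycle, so the detour walk is never a simple path even when $P_v$ already has the right parity; the paper's version can collapse the arc of $C$ to a single vertex. Both are valid, and your handling of the degenerate cases ($u=v$, $\ell=u$) is sound.
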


It is easy to see that Fact~\ref{fact:good-walk} is true. Indeed, for a given pair of vertices $u$ and $v$, we use connectivity to reach a fixed odd cycle by paths from each vertex, from which we may choose a path (possibly a single edge) to control the parity. In this process, we use three paths to build the good walk between $u$ and $v$ of a given parity. 

\begin{definition}Let $H$ be a graph with $V(H)=[h]$, and let $\varepsilon, p \in (0,1)$ and $m\in\mathbb N$. We define the \emph{$(\varepsilon, p,m)$-blow-up of $H$} as the graph constructed as follows. The vertex set consists of pairwise disjoint sets $V_1,\dots, V_h$, with $|V_1|=\dots=|V_h|=m$, and, for every $ij\in E(H)$, we add edges between $V_i$ and $V_j$ so that $(V_i,V_j)$ is an $(\varepsilon,p)$-regular pair with $d_p(V_i,V_j)\ge\varepsilon$.
\end{definition}

The main result of this subsection states that we can find cycles of various lengths in the blow-up of a connected matching.
Krivelevich, Kronenberg, and Mond~\cite{KKM2020} proved a similar result for blow-ups of cycles, and here we generalise their result for blow-ups of a connected matching.

\begin{proposition}
\label{prop:cycles-in-blowups}
Let $0<\varepsilon\leq 1/18$, $p\in(0,1)$,  $M>1$ and $k\in\mathbb N$, and let $1/m\ll \varepsilon,1/M,1/k$. Let $\ell\in\mathbb N$ satisfy $20Mk^2\log_2 \varepsilon m \leq \ell \leq (1-50\varepsilon)km$, and let $H$ be a graph on at most $Mk$ vertices that contains a connected matching with at least $2k$ vertices. If $G$ is a $(\varepsilon, p, m)$-blow-up of $H$, then $G$ contains a copy of $C_{2\ell}$.
Moreover, if the connected matching is odd, then $H$ contains a copy of $C_{2\ell+1}$.
\end{proposition}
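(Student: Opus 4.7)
Label the edges of the connected matching as $f_i = u_iv_i$ for $i \in [k]$; they all lie in a single connected component $C$ of $H$, and for $w \in V(H)$ write $V_w \subseteq V(G)$ for the corresponding cluster. The plan is to realise the target cycle as a cyclic concatenation of $k$ \emph{long paths} $P_1, \dots, P_k$ and $k$ short \emph{connector paths} $Q_1, \dots, Q_k$: each $P_i$ will live inside the $(\varepsilon,p)$-regular pair $(V_{u_i}, V_{v_i})$ with one endpoint $x_i \in V_{u_i}$ and the other $y_i \in V_{v_i}$, while $Q_i$ traces a walk in $H$ going from $y_i$ to $x_{i+1}$ (indices cyclic modulo $k$).

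\textbf{Connectors and length distribution.} For each $i\in[k]$, apply Fact~\ref{fact:good-walk} inside $C$ to obtain a walk $W_i$ from $v_i$ to $u_{i+1}$ using each vertex of $H$ at most three times, and of prescribed parity in the odd case. Since $|V(H)| \le Mk$, each $W_i$ has at most $3Mk$ vertices, so the connectors cost at most $3Mk^2$ edges in total; this is absorbed comfortably by the hypothesis $\ell \ge 20Mk^2\log_2 \varepsilon m$. To realise $W_i$ as a path $Q_i$ in $G$, I set aside a reservoir $R_w \subseteq V_w$ of $3k$ vertices in each cluster (enough because $w$ is visited at most $3k$ times across all walks), and for each step I pick a fresh reservoir vertex in the next cluster joined to the current one, which exists by the typicality guaranteed by Lemma~\ref{lemma:regularpairs}(ii). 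I then distribute the remaining length among the $P_i$'s by picking odd integers $a_i$ summing to $2\ell - \sum_i(|W_i|-1)$ with each $a_i \le (1-40\varepsilon)\cdot 2m$, which is feasible thanks to $\ell \le (1-50\varepsilon)km$. For the $C_{2\ell+1}$ case, the single extra unit of length is arranged by toggling the parity of exactly one $W_i$ using Fact~\ref{fact:good-walk}, which is the only place where the odd-matching hypothesis enters.

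\textbf{Long paths and the main obstacle.} For each $i$ set $V_{u_i}' = V_{u_i}\setminus R_{u_i}$ and $V_{v_i}' = V_{v_i}\setminus R_{v_i}$; the pair $(V_{u_i}', V_{v_i}')$ remains $(2\varepsilon,p)$-regular with $p$-density at least $\varepsilon/2$ by Lemma~\ref{lemma:regularpairs}(i). The main technical obstacle is to find, inside this sub-pair, a path $P_i$ of length \emph{exactly} $a_i$ whose endpoints $x_i, y_i$ are the ones dictated by the already-placed connectors. Lemma~\ref{lemma:regular-tree-embedding} embeds trees with prescribed colour-class sizes but not with prescribed endpoints, so I would pre-select $x_i, y_i$ to have typical degree (via Lemma~\ref{lemma:regularpairs}(ii)), extend each by a short initial segment into the pair, and then apply Lemma~\ref{lemma:regular-tree-embedding} on the sub-pair of remaining vertices to embed a path of the right length joining the new terminals. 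The $\log_2 \varepsilon m$ factor in the lower bound on $\ell$ most plausibly enters through an iterative slicing at this step, at each stage refining the sub-pair by a constant factor in order to nail the length down to the exact prescribed value. Concatenating $P_1, Q_1, P_2, Q_2, \dots, P_k, Q_k$ cyclically then produces the desired $C_{2\ell}$, and the parity choice on the $W_i$'s upgrades this to $C_{2\ell+1}$ in the odd case.
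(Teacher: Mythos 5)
Your high-level decomposition (long paths in the matching pairs, connectors along walks in $H$) is the right shape, but the plan fails at its core step, and it fails for a reason that is precisely what the paper's \emph{double-broom} device is designed to get around.

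The obstacle is that $(\varepsilon,p)$-regularity gives you control only over edges between sets of size at least $\varepsilon m$; in the sparse setting it says nothing about a single vertex having a neighbour in a set of size $O(k)$. Your connector construction tries to walk from one fixed vertex $y_i$ to the next reservoir vertex, but a typical vertex in $V_w$ has only $\Theta(p m)$ neighbours in the next cluster, and those need not meet a reservoir $R_{w'}$ of constant size $3k$ (for $p$ near the threshold $C/n$, a vertex has only $\Theta(1)$ neighbours per cluster). Lemma~\ref{lemma:regularpairs}(ii) does not rescue you: it controls degree into the whole cluster, not into a tiny prescribed subset. The same difficulty recurs when you ask for a path $P_i$ of exact length with \emph{prescribed} endpoints $x_i,y_i$; sparse regular pairs do not let you prescribe individual terminals. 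Your guess that the $\log_2\varepsilon m$ comes from ``iterative slicing'' is also not what is happening.

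What the paper does instead is replace every ``single endpoint'' with a binary tree of height $h = \lceil\log_2 \varepsilon m\rceil$, so that the end of each long piece is a set of $\Theta(\varepsilon m)$ leaves rather than a vertex. That is, the $P_i$'s are $(h,s_i)$-double-brooms (a path with a binary tree glued at each end), embedded in $(V_{u_i},V_{v_i})$ by Lemma~\ref{lemma:regular-tree-embedding}; the connectors $Q_i$ are likewise built by chaining $(h,2)$-double-brooms through small $10\varepsilon m$-sized slices of the clusters along the walk, glued together via Fact~\ref{fact:double-brooms}. Because all the ``ends'' are now sets of size $\geq\varepsilon m$, regularity \emph{does} guarantee an edge between consecutive ends, and the construction closes up into a cycle. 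This is also where $\log_2\varepsilon m$ actually enters: it is the depth $h$ of the binary trees, and the $(h+1)$ weight per walk-step in the cycle-length bookkeeping accounts for traversing a double-broom per edge of the connecting walk. To repair your proof you would need to import this fan-out idea; as written, the connector and the prescribed-endpoint long-path steps are genuine gaps.
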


The strategy in the proof of Proposition \ref{prop:cycles-in-blowups} is to break the cycle into small paths that will be embedded separately in the $(\varepsilon,p)$-regular pairs given by the edges of the matching.
We then build the cycle by connecting these paths using Fact~\ref{fact:good-walk}. Two technical difficulties arise at this point. Firstly, the connections must avoid those vertices used to construct the paths, and also those vertices used to connect the paths. We will solve this by using only a few vertices in each cluster to make the connections. Secondly, the diameter of the blow-up of $G$ may be of order $\log m$, which means that these connections have to account for this obstacle. We address the second issue by defining the following graphs.

\begin{definition} \label{definition:doublebroom}
	For $h, s\in\mathbb N$, an \emph{$(h,s)$-double-broom} is a tree constructed by taking two disjoint copies of a binary tree of height $h$, and attaching their roots to the ends of a path with $s$ vertices, whose internal vertices are disjoint from both binary trees.
	We say that the two sets of leaves of the binary trees are the \textit{end sets} of the double-broom,
	and say that a double-broom \emph{starts} and \emph{ends} at sets $A$ and $B$, respectively, if one of its end sets is contained in $A$ and the other one in $B$.
\end{definition}

\begin{fact}
\label{fact:double-brooms}
Let $F_i$, $i\in\{1,2\}$, be vertex-disjoint copies of an $(h, s_i)$-double-broom in a graph $G$, with end sets $(A_i,B_i)$, respectively. If $e(B_1,A_2)>0$, then $G[V(F_1)\cup V(F_2)]$ contains an $(h, s_1+s_2+2h)$-double-broom with ends $(A_1,B_2)$. 
\end{fact}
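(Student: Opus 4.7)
The plan is to construct the required $(h, s_1+s_2+2h)$-double-broom explicitly inside $G[V(P_1)\cup V(P_2)]$, keeping the two ``outer'' binary trees (those with leaf sets $A_1$ and $B_2$) as the end-trees of the new broom, and lengthening the spine by routing through the two inner binary trees of $P_1$ and $P_2$, linked by the hypothesised edge between $B_1$ and $A_2$.

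First I fix notation: for $i\in\{1,2\}$, let $r_i^A$ and $r_i^B$ denote the roots of the binary trees of $P_i$ with leaf sets $A_i$ and $B_i$ respectively, and let $Q_i$ be the spine of $P_i$, i.e.\ the path of $s_i$ vertices with endpoints $r_i^A$ and $r_i^B$. Using the hypothesis $e(B_1,A_2)>0$, I fix an edge $ba$ with $b\in B_1$ and $a\in A_2$, then take $\pi_1$ to be the unique root-to-leaf path from $r_1^B$ to $b$ in the $B_1$-tree of $P_1$, and $\pi_2$ the unique root-to-leaf path from $a$ to $r_2^A$ in the $A_2$-tree of $P_2$; each has exactly $h+1$ vertices. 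Concatenating $Q_1$, $\pi_1$, the edge $ba$, $\pi_2$, and $Q_2$ produces a walk from $r_1^A$ to $r_2^B$ which is in fact a path, since within each $P_i$ the two binary trees and the spine are mutually disjoint apart from the two roots, and $P_1, P_2$ are vertex-disjoint by hypothesis. Identifying the four overlap vertices $r_1^B$, $b$, $a$, $r_2^A$ only once gives a total of $s_1 + h + 1 + h + (s_2-1) = s_1 + s_2 + 2h$ vertices on this new spine.

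To finish, I attach the $A_1$-binary tree of $P_1$ at $r_1^A$ and the $B_2$-binary tree of $P_2$ at $r_2^B$. The resulting tree has exactly the shape of an $(h, s_1+s_2+2h)$-double-broom with ends $(A_1, B_2)$: the two end-trees remain disjoint from the interior of the new spine because they are disjoint from the inner trees of their own $P_i$ (and hence from $\pi_1, \pi_2$) and from all of $P_{3-i}$. The whole argument is essentially a bookkeeping verification that the definition is satisfied, so there is no real obstacle; the one delicate point is to avoid double-counting the four shared vertices when verifying that the new spine has exactly $s_1+s_2+2h$ vertices.
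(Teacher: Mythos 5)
Your argument is correct, and since the paper states this as an unproved \emph{Fact}, there is no authorial proof to diverge from; what you write down is the natural (essentially unique) construction. The vertex count is right: concatenating $Q_1$ ($s_1$ vertices), the root-to-leaf path $\pi_1$ ($h+1$ vertices sharing $r_1^B$ with $Q_1$), the edge $ba$, the leaf-to-root path $\pi_2$ ($h+1$ vertices sharing $r_2^A$ with $Q_2$), and $Q_2$ ($s_2$ vertices) gives $s_1+(h+1)+(h+1)+s_2-2=s_1+s_2+2h$ spine vertices, and the disjointness checks (outer binary trees disjoint from $\pi_1,\pi_2$ inside each $P_i$ and disjoint from the other $P_i$ entirely) are exactly as you say.
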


We will use Fact \ref{fact:double-brooms} to combine several double-brooms to make longer double-brooms. Note that the longest paths in a double-broom are those paths between any pair of vertices belonging to different end sets. Also, note that once we have found an $(h,s)$-double-broom with end sets $A$ and $B$ such that $2h+s=\ell$, we only need to guarantee that $e(A,B)>0$ in order to find a copy of $C_\ell$.
We are now set to prove Proposition~\ref{prop:cycles-in-blowups}.

\begin{proof}[Proof of Proposition \ref{prop:cycles-in-blowups}]
Let $H$ be a graph on at most $Mk$ vertices which contains a connected matching on $2k$ vertices, and let us assume, without loss of generality, that $H$ has only one connected component.
Let $G$ be an $(\varepsilon,p,m)$-blow-up of $H$, let $(x_i,y_i)_{i\in [k]}$ be a matching in $H$, and let $\mathcal{M}=(X_i,Y_i)_{i\in [k]}$ be its counterpart in $G$.
If $H$ is bipartite, we assume the labelling is done in such a way that the $x_i$'s are all in the same part and consequently the same holds for the $y_i$'s.
Our strategy is to embed almost-spanning double-brooms into each of the regular pairs coming from the matching, and then use the structure of $G$ to connect those brooms. We will focus here when $H$ is non-bipartite and make explicit the differences in the bipartite case if necessary.

For each $i\in[k-1]$, let $Q_i$ be a good even-walk in $H$ between $y_i$ and $x_{i+1}$, and let $Q_k$ be a good odd-walk from $y_k$ to $x_1$.
(In the even case, we could take $Q_k$ to have even length, 
which would make the proof slightly simpler.)
We start by finding double-brooms inside the regular pairs given by the matching.

\begin{claim}
\label{claim:brooms-in-the-matching}Let $h=\lceil\log_2  \varepsilon m\rceil$. There exists a collection of $(h, s_i)$-double-brooms $\{F_i\}_{i\in [k]}$ in $G$ such that the following properties hold.
    \begin{enumerate}
        \item \label{item:broomsareintherightplace} $F_i\subseteq G[X_i,Y_i]$ and $F_i$ starts at $X_i$ and ends at $Y_i$ for each $i\in[k]$,
        \item \label{item:broomsarenotsolarge} $|F_i| \leq (2-96\varepsilon)m$ for each $i\in [k]$, and
        \item \label{item:broomshavetherightsizes} $\sum_{i=1}^{k}(2h+s_i)= 2\ell -  (h+1)\cdot\left( |Q_{k}|-3+\sum_{i=1}^{k-1}(|Q_i|-2)\right)$.
    \end{enumerate}
\end{claim}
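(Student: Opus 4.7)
The plan is to build each double-broom $P_i$ independently inside its matching pair $(X_i,Y_i)$, using Lemma~\ref{lemma:regular-tree-embedding} applied to the regular pair $(X_i,Y_i)$.

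First, I would choose the parameters $s_i\in\mathbb{N}$. By requirement (iii), their sum must equal
$T := 2\ell - 2hk - (h+1)\bigl(\sum_{i=1}^{k-1}(|Q_i|-2) + |Q'_k|-3\bigr)$.
The lower bound $\ell\ge 20Mk^2\log_2\varepsilon m$, combined with the fact (implicit in Fact~\ref{fact:good-walk}) that every good walk in $H$ has at most $3|H|\le 3Mk$ vertices, ensures $T\ge 2k$. The upper bound $\ell\le (1-50\varepsilon)km$ ensures the required average per-broom length is small enough to fit inside the size budget of (ii), computed from $|P_i|=2^{h+2}+s_i-4$ together with $2^{h+2}\le 8\varepsilon m$. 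Finally, since each $|Q_i|$ is even and $|Q'_k|$ is odd (as they are good walks of the respective parities), each summand $|Q_i|-2$ and $|Q'_k|-3$ is even, so $T$ itself is even; this allows $T$ to be distributed as a sum of $k$ positive \emph{even} integers $s_1,\ldots,s_k$, each respecting the size bound in (ii).

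Having fixed the $s_i$, for each $i\in[k]$ I would apply Lemma~\ref{lemma:regular-tree-embedding} to the $(\varepsilon,p)$-regular pair $(X_i,Y_i)$ (which has density at least $\varepsilon$ and $|X_i|=|Y_i|=m$) with $D=3$ and $V_1'=X_i$, $V_2'=Y_i$. The hypotheses are met: $\varepsilon\le 1/18 = 1/(4D+6)$ is given, $m\ge (2D+4)\varepsilon m = 10\varepsilon m$, and each colour class of $P_i$ has at most $|P_i|/2+1 \le (1-48\varepsilon)m+1$ vertices, comfortably below the threshold $m-(2D+1)\varepsilon m = (1-7\varepsilon)m$. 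The lemma thus embeds $P_i$ into $G[X_i,Y_i]$. Because $P_i$ is bipartite and $s_i$ is even, the leaves of its two binary trees lie in opposite colour classes of the broom, so we may orient the embedding so that one end-set lands inside $X_i$ and the other inside $Y_i$, giving (i). Properties (ii) and (iii) then follow by the choice of $s_i$.

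The main delicate step is the parameter-selection phase: the $s_i$ must simultaneously be positive even integers, individually bounded to respect (ii), and sum exactly to $T$. Verifying that all these constraints are mutually compatible is a counting argument leveraging the sizeable gap between the lower bound $\ell\ge 20Mk^2\log_2\varepsilon m$ and the upper bound $\ell\le (1-50\varepsilon)km$, together with the slack provided by $1/m\ll\varepsilon,1/M,1/k$. Once the $s_i$ are chosen, the tree-embedding step itself is a routine invocation of Lemma~\ref{lemma:regular-tree-embedding}.
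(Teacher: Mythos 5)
Your proposal follows essentially the same route as the paper: choose $s_1,\dots,s_k$ to be positive even integers summing to the prescribed total (your $T$), with the parity freedom coming from the fact that each $|Q_i|-2$ and $|Q'_k|-3$ is even, and then embed each double-broom into the corresponding regular pair via Lemma~\ref{lemma:regular-tree-embedding} with $D=3$. The paper is a bit more concrete in the parameter-selection step — it first partitions $2\ell+1$ into balanced pieces $t_i$ of controlled parity ($t_i$ even for $i<k$, $t_k$ odd) and then sets $s_i = t_i - 2h - (h+1)(|Q_i|-2)$ and $s_k = t_k - 2h - (h+1)(|Q'_k|-3) - 1$ — whereas you only gesture at the existence of a compatible partition; spelling this out is advisable, but it is the same idea.

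One small arithmetical caveat worth noting: with your (correct) count $|P_i| = 2^{h+2} + s_i - 4$ and $4\varepsilon m \le 2^{h+2} \le 8\varepsilon m$, the balanced choice forces some $s_i \approx 2\ell/k \le (2-100\varepsilon)m$ and hence $|P_i|$ can reach roughly $(2-92\varepsilon)m$; the stated bound $(2-96\varepsilon)m$ in part (ii) is therefore not quite achievable. The paper dodges this because its computation uses the looser (and in fact incorrect) count "$2^{h+1}+s_i$" for the double-broom, which undercounts the two binary trees. The discrepancy is inconsequential — everywhere the claim is used, any bound of the form $(2-C\varepsilon)m$ with $C$ comfortably larger than $2D+1 = 7$ suffices — but if you carry the correct formula you should adjust the constant in (ii) accordingly rather than assert the $96$ bound holds.
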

\begin{proofclaim}
Partition $2\ell+1$ into $k$ numbers $\{t_i\}_{i\in[k]}$, as balanced as possible, so that $t_i$ is even for each $i\in[k-1]$ and $t_{k}$ is odd. In this way, these numbers differ by at most $2$ from each other, and they satisfy $2 \ell/k + 3 \geq t_i \geq 2 \ell/k - 2$.
Now define $s_i=t_i-2h-(h+1)(|Q_i|-2)$, for $i\in[k-1]$, and $s_{k}=t_{k}-2h-(h+1)(|Q_{k}|-3)-1$.
Note that each $s_i$ is even (since $|Q_i|$ is even for each $i\in [k-1]$ and $|Q_k|$ is odd).
Also, since $Q_i$ and $Q_k$ are walks on at most $3 |H| \leq 3Mk$ vertices, we have 
\[ s_i \geq t_i -2h - (h+1)3Mk \geq \frac{2 \ell}{k} - (h+1)(3Mk+2) \geq 2, \]
where in the last inequality we used $\ell \geq 20 M k^2 h$.
Note that the choices of $s_i$ satisfy precisely property \ref{item:broomshavetherightsizes} of the claim.
Moreover, the number of vertices of an $(h,s_i)$-double-broom is at most
\[2^{h+2}+s_i\leq 4\varepsilon m + \frac{2\ell}{k} \leq (2-96\varepsilon)m,\]
where we used $\ell \leq (1 - 50 \varepsilon)km$ in the last step. Thus we have the required property \ref{item:broomsarenotsolarge} of the claim.

Then we just have to find an $(h,s_i)$-double-broom $F_i\subseteq G[X_i,Y_i]$. Notice that double-brooms are trees with maximum degree $D=3$, as $s_i \geq 2$. Recall that $(X_i,Y_i)$ is an $(\varepsilon,p)$-regular pair in $G$ with $d_p(X_i,Y_i)\ge \varepsilon$ and $\varepsilon\leq 1/(4D+6)= 1/18$. Moreover, since each $s_i$ is even, the double-brooms are balanced bipartite graphs, with each part having at most $|P_i|/2 \leq (2-96\varepsilon)m/2
\le m- (2D+1)\varepsilon m$  vertices. Therefore, by applying Lemma \ref{lemma:regular-tree-embedding} we get the desired double-brooms.
\end{proofclaim}


\begin{claim}
\label{claim:connecting-brooms}
 Let $(W_i)_{i\in [2t]}$ be a sequence of clusters of $G$ such that, for $i\in[2t-1]$, the pair $(W_i,W_{i+1})$ corresponds to an edge in $H$. Then, for every choice of disjoint sets $W'_i\subset W_i$ with $|W'_i|\geq 10\varepsilon m$, $G[\bigcup_{i\in[2t]}W'_i]$ contains an $(h, 2t(h+1) - 2h)$-double-broom that starts at $W_1$ and ends at $W_{2t}$,
 and uses at most $4 \varepsilon m$ vertices from each $W'_i$.
\end{claim}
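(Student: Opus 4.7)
The plan is to build the $(h, 2t(h+1)-2h)$-double-broom piecewise: for each $i \in [t]$, I embed a small $(h,2)$-double-broom $P_i$ inside $G[W'_{2i-1} \cup W'_{2i}]$, and then iteratively glue the $t$ brooms together using Fact~\ref{fact:double-brooms} and edges supplied by the cross-pair regularity.

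To set up the pieces, for each $i \in [t]$ I would invoke Lemma~\ref{lemma:regular-tree-embedding} with $D = 3$ applied to the $(\varepsilon, p)$-regular pair $(W_{2i-1}, W_{2i})$ and the subsets $W'_{2i-1}, W'_{2i}$ (each of size at least $10\varepsilon m = (2D+4)\varepsilon m$), to find a copy of an $(h,2)$-double-broom $P_i$ whose two end sets $A_i \subseteq W'_{2i-1}$ and $B_i \subseteq W'_{2i}$ have size $2^h$ each. Such a double-broom is a tree of maximum degree $3$ with $2^{h+2}-2$ vertices and balanced colour classes of size $2^{h+1}-1 < 4\varepsilon m$, so $P_i$ uses at most $4\varepsilon m$ vertices in each of $W'_{2i-1}$ and $W'_{2i}$.

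Next, for each $i \in [t-1]$, I would use the $(\varepsilon, p)$-regularity of the pair $(W_{2i}, W_{2i+1})$, which has density at least $\varepsilon$, to find an edge $b_i a_{i+1}$ with $b_i \in B_i$ and $a_{i+1} \in A_{i+1}$; this exists since $|B_i|, |A_{i+1}| = 2^h \geq \varepsilon m$. I would then iteratively apply Fact~\ref{fact:double-brooms} with these edges: combining $P_1$ and $P_2$ via $b_1 a_2$ yields an $(h, 2h+4)$-double-broom with end sets $(A_1, B_2)$, and continuing with $P_3, \ldots, P_t$ produces, after $t-1$ combinations, an $(h, 2t + 2h(t-1)) = (h, 2t(h+1) - 2h)$-double-broom with end sets $A_1 \subseteq W'_1$ and $B_t \subseteq W'_{2t}$, as required. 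Since the final broom is contained in $\bigcup_i V(P_i)$, the per-cluster bound $\leq 4\varepsilon m$ is inherited from the $P_i$.

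The delicate point will be the tree embedding in the first step: the colour classes of an $(h,2)$-double-broom have size $2^{h+1}-1$, which can be as large as roughly $4\varepsilon m$, while Lemma~\ref{lemma:regular-tree-embedding} demands colour class sizes at most $|W'_i|-(2D+1)\varepsilon m = 3\varepsilon m$ when $|W'_i|=10\varepsilon m$. I expect to overcome this tightness either by a mild sharpening of the embedding lemma via the Friedman--Pippenger-style tools developed in the appendix, or equivalently by embedding the two binary trees of each $P_i$ separately, since a single binary tree of height $h$ has colour classes of size at most $(2^{h+2}-1)/3 < 3\varepsilon m$, which comfortably fits in $|W'_i|-7\varepsilon m$, after which the two roots can be joined by an edge across the regular pair while keeping the trees vertex-disjoint.
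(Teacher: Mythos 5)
Your construction is exactly the paper's: embed an $(h,2)$-double-broom in each pair $(W'_{2i-1},W'_{2i})$ via Lemma~\ref{lemma:regular-tree-embedding}, then chain them left to right with Fact~\ref{fact:double-brooms} using a cross edge supplied by the $(\varepsilon,p)$-regularity of $(W_{2i},W_{2i+1})$, and the bookkeeping $2t + 2h(t-1)=2t(h+1)-2h$ is the same. So in substance this is the same proof.

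The only place where you diverge is the worry about the colour-class bound for the $(h,2)$-double-broom. The paper asserts the colour classes have at most $2\varepsilon m$ vertices and plugs that into Lemma~\ref{lemma:regular-tree-embedding}; you compute them to be roughly $2^{h+1}$, which, since $\varepsilon m\leq 2^h<2\varepsilon m$, can reach almost $4\varepsilon m$ and thus exceed the $|W'_i|-7\varepsilon m=3\varepsilon m$ threshold when $|W'_i|$ is exactly $10\varepsilon m$. You are right that there is a small discrepancy in constants here, and your instinct to resolve it via the Friedman--Pippenger extendability machinery in the appendix is sound (the alternative fix, embedding the two binary trees separately and then joining their roots, needs the extendability lemmas anyway, because Lemma~\ref{lemma:regular-tree-embedding} does not let you prescribe the image of the root). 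Note also that this tightness is harmless in context: the only application of the claim, inside the proof of Proposition~\ref{prop:cycles-in-blowups}, provides $|W'_i|\geq 30\varepsilon m$, far above the threshold, so one could just as well state the claim with a slightly larger constant and avoid the issue entirely.
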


\begin{proofclaim}
We first check that the pairs $(W'_{2i-1},W'_{2i})$, $i\in[t]$, satisfy the hypothesis of Lemma~\ref{lemma:regular-tree-embedding} to find an $(h,2)$-double-broom.
Indeed, each double-broom has maximum degree $D = 3$, $\varepsilon<1/18$, $|W'_{2i-1}|,|W'_{2i}|\geq 10\varepsilon m = (2D+4)\varepsilon m$ and each colour class of an $(h,2)$-double-broom has at most $2\varepsilon m \leq |W'_i|-7\varepsilon m$ vertices.
Thus we may find an $(h,2)$-double-broom  $T_i \subseteq G[W'_{2i-1}, W'_{2i}]$ for each $i \in [t]$.
Note that each end-set of these double-brooms has at least $\varepsilon m$ vertices.
So, if we consider $T_i$ ending in $B_i\subset W'_{2i}$ and $T_{i+1}$ starting at $A_{i+1}\subset W'_{2i+1}$, we have
\begin{equation}
\label{equation:joining-brooms}
e(B_i,A_{i+1})>(d_p(W_{2i},W_{2i+1})-\varepsilon)|W_{2i}||W_{2i+1}|>0.
\end{equation}
Therefore, by Fact \ref{fact:double-brooms} we can find an $(h, 2t(h+1) - 2h)$-double-broom.
\end{proofclaim}

Now we are ready for the final part of the proof.
For $i\in[k]$, let $A_i\subset X_i$ and $B_i\subset Y_i$ be the end sets of $F_i$. We will say that a path $P$ \textit{connects} $F_i$ to $F_{i+1}$ (indices modulo $k$) if one end of this path is adjacent to $B_i$ and the other end has a neighbour $A_{i+1}$. We aim to find paths $P_i$ connecting $F_i$ to $F_{i+1}$, such that the paths $(P_i)_{i\in [k]}$ are pairwise disjoint and are also disjoint from $\bigcup_{i\in [k]}F_i$.
In addition, for each $i\in [k-1]$ we have $|P_i|=(h+1)(|Q_i|-2)$ and $|P_k|=(h+1)(|Q_k|-3)+1$.

Suppose we have already found $P_1,P_2,\dots, P_{j-1}$ for some $1\leq j\leq k$ (where the case $j=1$ is vacuous), and so that our task is to build $P_j$.
We denote by $(B_j, W_1,W_2,\ldots, W_{2t}, A_{j+1})$ the sequence of clusters coming from the walk $Q_j$, if $j<k$; and we let $(B_{k},W_1, W_2, \ldots , W_{2t+1}, A_1)$ be the corresponding sequence for $Q_{k}$. Let $W'_i\subset W_i$ be the vertices of $W_i$ that do not intersect $\bigcup_{i\in[k]}F_i$ or $\bigcup_{i\in[j-1]}P_i$.
Using Claim~\ref{claim:brooms-in-the-matching}\ref{item:broomsarenotsolarge}, the sizes of the paths and that $m$ is large, we conclude that
\begin{equation*}
|W'_i| \geq |W_i|- (2-96 \varepsilon)\frac{m}{2} - \sum_{i=1}^{j-1}|P_i| \geq 48\varepsilon m - k (h+1) 3Mk\geq 30\varepsilon m.
\end{equation*}
Moreover, as each vertex appears at most three times in the walk $Q_j$, we can take pairwise disjoint subsets $W''_i\subset W'_i$ with $10\varepsilon m$ vertices such that 
the pair $(W''_i,W''_{i+1})$ comes from an edge from $H$ for each $i\in [2t]$, if $j<k$, or $i\in[2t+1]$, if $j=k$. Then, by Claim~\ref{claim:connecting-brooms}, we get that $G[\bigcup_{i\in [2t]}W''_i]$ contains an $(h,2t(h+1)-2h)$-double-broom that starts at $W_1$ and ends in $W_{2t}$.

Now the analysis differs depending on the value of $j$.
Suppose first that $j < k$.
Since $(Y_j, W_1)$ and $(W_{2t},X_{j+1})$ both come from an edge in $H$ and the ends of the double-brooms have at least $\varepsilon m$ vertices, we can use the same argument as in \eqref{equation:joining-brooms} to find the desired path $P_j$.
Since $Q_j$ is a good even-walk between $y_j$ and $x_{j+1}$ in $H$, we have that $2t+2 = |Q_j|$,
and therefore $P_j$ has precisely $2t(h+1) = (h+1)(|Q_j|-2)$ vertices. 

We are only left with the case when $j=k$. Again, we have that $G[\bigcup_{i\in [2t]}W''_i]$ contains an $(h,2t(h+1)-2h)$-double-broom starting at $A'_{k}\subset W_1$ and ending at $B'_{k}\subset W_{2t}$. 
As before, we can find an edge between $B_{k}\subset Y_{k}$ (the end set of $F_{k}$) and $A'_{k}$, as both have at least $\varepsilon m$ vertices and since they come from an edge in $H$. To connect to $F_1$, however, we cannot use a single edge and thus need to make a length 2 connection via the set $W''_{2t+1}$. To do so, we need to find a vertex $u\in W''_{2t+1}$ that has a neighbour in both $B'_{k}$ and $A_1$. By Lemma~\ref{lemma:regularpairs}, the number of vertices in $W_{2t+1}$ with no neighbours in $B'_{k}$ is at most $\varepsilon m$, and the number of vertices in $W_{2t+1}$ with no neighbours in $A_1$ is also at most $\varepsilon m$.
Therefore, since $|W''_{2t+1}|> 2\varepsilon m$, we can find such a vertex $u$.

Since $Q_k$ is a good-odd walk, we have $|Q_k| = 2t+3$ and thus $P_k$ has precisely $2t(h+1) + 1 = (h+1)(|Q_k| - 3) + 1$ vertices. Then, using the double-brooms $F_i$'s and the paths $P_i$'s connecting $F_i$ with $F_{i+1}$, we find a cycle of length
\[\sum_{i=1}^{k}(2h+s_i)+\sum_{i=1}^{k-1}(h+1)(|Q_i|-2) +(h+1)(|Q_{k}|-3)+1=2\ell+1,\]
which finishes the proof.
\end{proof}

\subsection{Proof of the Stability lemma}The last two lemmas that we need are well-known Tur\'an-type results for cycles. 
\begin{lemma}[Erd\H os--Gallai~\cite{erdHos1959maximal}]\label{lemma:ErdosGallai} For all $k\ge 3$ and $n\ge 1$, every $n$-vertex graph with no cycle of length at least $k$ has at most $\frac{1}{2}(k-1)(n-1)$ edges.
\end{lemma}
\begin{lemma}[Jackson~\cite{JACKSON1985118}]\label{lemma:jackson}Let $n,m,t\in\mathbb N$ satisfy $n\ge m\ge t\ge 2$ and $m\le 2t-2$. Suppose $G$ is a bipartite graph with parts of size $n$ and $m$ such that $e(G)>(n-1)(t-1)+m$. Then, $G$ contains a cycle of length at least $2t$.
    
\end{lemma}

We are now ready to prove Theorem~\ref{theorem:stability:1}. 
The proof is divided into three steps: starting from $G = G(N,p)$, we apply regularity; then we deduce that the reduced graph should follow an extremal colouring; and finally we transfer that information to $G$.

\begin{proof}[Proof of Theorem~\ref{theorem:stability:1}]
During the proof we will choose constants in the following hierarchy
\[1/n_0\ll1/K\ll \eta  \ll \varepsilon \ll  \delta \ll \delta'\ll \alpha' \ll \alpha \ll 1.\]

\noindent \emph{Step 1: Applying regularity.}
Let $G=G(N,p)$.
By Lemma \ref{lemma:upper-uniform}, we know that for $p\geq K/N$, w.h.p. $G$ is $(\eta,p)$-uniform, thus, in particular, we have  $e(G) = (1\pm \eta) p \binom{N}{2}$. Let us consider a red-blue colouring of $E(G)$, and let $G_R$ and $G_B$ be the graphs formed by the red and blue edges, respectively.
By assumption, neither $G_R$ nor $G_B$ contain a copy of $C_n$.
Since both $G_R$ and $G_B$ are $(\eta,p)$-upper uniform, we may use the Colourful sparse regularity lemma (Theorem~\ref{theorem:regularity}) in order to find an $(\varepsilon,p)$-regular partition $V(G)=V_0\cup V_1\cup\dots \cup V_k$, with $k\geq 1/\varepsilon$, so that all but at most $\varepsilon k^2$ pairs $(V_i,V_j)$ are $(\varepsilon,p)$-regular in both $G_R$ and $G_B$.

Let $\Gamma_R$ and $\Gamma_B$ be the $(\varepsilon, p,\varepsilon)$-reduced graph of $G_R$ and $G_B$, respectively, and set $\Gamma=\Gamma_R\cup \Gamma_B$. 
For all but at most $\varepsilon k^2$ choices, the pair $(V_i, V_j)$ is $(\varepsilon, p)$-regular both for $G_R$ and $G_B$; we will show that any such pair belongs to $\Gamma$.
Indeed, since $G$ is $(\eta, p)$-uniform, we know that for every such pair, $G[V_i, V_j]$ contains at least $(1 - \eta)p|V_i||V_j|$ edges.
Therefore, in the most popular colour $c$ among those used in $G[V_i, V_j]$,
we certainly have $G_c[V_i, V_j] \geq (1 - \eta)p|V_i||V_j|/2 \geq \varepsilon p |V_i||V_j|$, and therefore the pair $(V_i, V_j)$ belongs to $\Gamma_c \subseteq \Gamma$.
We deduce then that
\[e(\Gamma)\ge (1-5\varepsilon)\frac{k^2}{2}.\]
Now we colour the edges of $\Gamma$ by declaring an edge red or blue if it belongs to $\Gamma_R$ or $\Gamma_B$, respectively, where ties are broken arbitrarily.

\medskip \noindent \emph{Step 2: Extremal colouring in the reduced graph.} In this part, we will show that the bound $N\geq (1-\delta)R(C_n)$ implies that this colouring of $\Gamma$ is either $\alpha'$-odd- or $\alpha'$-even-extremal, depending on the parity of $n$.

Assume that $n$ is odd first, and note that $\delta'\ll \alpha'$ is enough to apply Proposition~\ref{proposition:stability:odd} with $\delta', \alpha'$ in place of $\delta$ and $\alpha$.
Let $k_1$ be the largest odd number such that $k\geq (2-\delta')k_1$.
Proposition \ref{proposition:stability:odd} states that the colouring of $\Gamma$ either is $\alpha'$-odd-extremal or it contains a monochromatic copy of $C_{k_1}$. 
We would like to rule out this last possibility.
If it contains a monochromatic $C_{k_1}$, say in red, then $G_R$ contains an $(\varepsilon, p, m)$-blow-up of an odd connected matching on at least $k_1-1$ vertices, for some $m\geq \lfloor (1-\varepsilon)N/k\rfloor$.
Therefore, by applying Proposition \ref{prop:cycles-in-blowups}, with $M = 3$, we get that $C_n\subset G_R$, as long as
\begin{equation}\label{eq:odd component}240k^2\log_2 \varepsilon m \leq n-1 \leq (1-50\varepsilon)(k_1-1)m.\end{equation}
The lower bound in~\eqref{eq:odd component} holds as $N= O(n)$ and $n\gg \log n$, for sufficiently large $n$. Moreover, we will assume this for all future applications of Proposition \ref{prop:cycles-in-blowups}, so we will not check again if this inequality holds.  For the upper bound in~\eqref{eq:odd component}, first notice that $k_1-1\geq k/(2-\delta')-3\geq (1+\delta'/2)k/2$, since $k\geq 1/\varepsilon$. Also, $m\geq (1-2\varepsilon)N/k \geq (1-\delta-2\varepsilon)2n/k$, and thus 
\begin{align*}
(1-50\varepsilon)(k_1-1) m \geq (1-50\varepsilon) \left(1+\frac{\delta'}2\right)\frac{k}{2} \frac{(1-\delta-2\varepsilon) 2n}{k} \geq& \left(1-\delta-52\varepsilon+ \frac{\delta'}{2}\right)n \geq \ n,
\end{align*}
as long as $ \delta, \varepsilon \ll \delta'$.
In the above inequality, we used that $N\geq (1-\delta)2n$ for the odd case and that we could find odd cycles of length roughly $k/2$.
The conclusion is that $\Gamma$ cannot contain a monochromatic copy of $C_{k_1}$, and therefore the colouring of $\Gamma$ is $\alpha'$-odd-extremal, as required.

The argument changes straightforwardly when considering the case of even $n$, so we omit it.
In any case, the conclusion we get is that the red-blue colouring of $E(\Gamma)$ is $\alpha'$-odd-extremal, or $\alpha'$-even-extremal, according to the parity of $n$. 

\medskip \noindent \emph{Step 3: Extremal colouring in the random graph.}
Now that we have established that $\Gamma$ follows an extremal colouring, we will show that $G$ inherits this colouring.
We consider a partition $V(\Gamma)=X\cup Y$, with $|X|\geq |Y|$, given by the definition of extremal colourings (red colour inside parts and blue across parts).
We also consider the partition $V(G)=X'\cup Y'$ given by the union of the corresponding clusters contained in $X$ or in $Y$, respectively. 

\begin{claim}\label{claim:biparte:alpha}
$e(G_R[X',Y']) < \alpha e(G[X',Y'])/2$.
\end{claim}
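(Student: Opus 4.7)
My plan is to bound the red edges in $G[X', Y']$ by summing contributions from each cluster pair crossing the partition. It is convenient first to specify the tie-breaking used when defining the colouring of $\Gamma$: break ties in favour of red, so that every pair $(V_i, V_j)$ coloured blue in $\Gamma$ lies in $\Gamma_B \setminus \Gamma_R$. Consequently, for every such blue pair, either the pair is not $(\varepsilon, p)$-regular in $G_R$ (and there are at most $\varepsilon \binom{k}{2}$ such irregular-in-$G_R$ pairs overall), or it is $(\varepsilon, p)$-regular in $G_R$ with $p$-density smaller than $\varepsilon$. The extremal argument in Step 2 goes through for any tie-breaking convention, so this choice does not affect what has already been established.

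Write $m$ for the common cluster size. I would classify the cluster pairs $(V_i, V_j)$ with $V_i \in X$ and $V_j \in Y$ into three groups. First, pairs coloured red in $\Gamma$: by the $\alpha'$-extremal property, there are at most $\alpha' e_\Gamma(X, Y) \leq \alpha' |X||Y|$ such pairs, each contributing at most $(1+\eta) p m^2$ red edges by $(\eta,p)$-upper-uniformity applied to $V_i \cup V_j$. Second, pairs coloured blue in $\Gamma$ which are $(\varepsilon, p)$-regular in $G_R$: by the tie-breaking convention each contributes fewer than $\varepsilon p m^2$ red edges. Third, pairs not in $\Gamma$ at all, together with blue pairs in $\Gamma$ that are irregular in $G_R$: altogether there are $O(\varepsilon k^2)$ such pairs (using $e(\Gamma) \geq (1-3\varepsilon) k^2/2$ plus the regularity bound), each contributing at most $(1+\eta) p m^2$ red edges. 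Summing these three contributions and using $|X||Y| m^2 = |X'||Y'|$ along with $k^2 m^2 = O(N^2) = O(|X'||Y'|)$ (since $|X|, |Y| = \Theta(k)$) yields
\[ e(G_R[X', Y']) \leq (\alpha' + O(\varepsilon))(1+\eta) p |X'||Y'|. \]

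To finish, since $|X'|, |Y'| \geq \eta N$, applying $(\eta, p)$-uniformity of $G$ gives $e(G[X', Y']) \geq (1-\eta) p|X'||Y'|$, so
\[ \frac{e(G_R[X', Y'])}{e(G[X', Y'])} \leq 2(\alpha' + O(\varepsilon)) < \frac{\alpha}{2}, \]
using the hierarchy $\eta \ll \varepsilon \ll \alpha' \ll \alpha$ established at the beginning of the proof. The main subtlety lies in pairs belonging to $\Gamma_R \cap \Gamma_B$: without the strategic tie-breaking, such pairs (when coloured blue in $\Gamma$) could individually carry close to $(1+\eta) pm^2$ red edges despite being reduced-blue, and the bound would fail. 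Tie-breaking in favour of red absorbs these pairs cleanly into the first group and resolves the issue.
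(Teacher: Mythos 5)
Your proof is correct, and it takes a genuinely different route from the paper. The paper argues by contradiction: assuming $e(G_R[X',Y']) \geq \alpha e(G[X',Y'])/2$, it first deduces $e(\Gamma_R[X,Y]) \geq \alpha k^2/100$, then splices a long red path in $\Gamma_R[X,Y]$ onto a near-Hamilton red cycle in $\Gamma_R[X]$ (obtained via Erd\H{o}s--Gallai from the extremal structure) to build an odd red connected matching on $|X| + \alpha k/400$ vertices, and finally applies \cref{prop:cycles-in-blowups} to produce a red $C_n$, contradicting the hypothesis. Your argument is a direct cluster-by-cluster count, and the tie-breaking observation is the key device: with ties to red, ``coloured red'' coincides with ``in $\Gamma_R$'', so the $\alpha'$-extremal structure of $\Gamma$ directly bounds $e(\Gamma_R[X,Y])$ by $\alpha'|X||Y|$. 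Combined with regularity and cluster-level uniformity (which the paper also invokes in Step~1, so $m \geq \eta N$ is implicitly available), this gives the required bound without any further appeal to the absence of monochromatic $C_n$. What your approach buys is simplicity; what it gives up is uniformity across all three claims. For \cref{claim:blue-edges-inside-odd} one would instead want ties broken toward blue so that ``red inside $X$'' implies ``not in $\Gamma_B$'', which is incompatible with your red tie-breaking; a red-coloured pair inside $X$ may still lie in $\Gamma_B$ and carry $\Theta(pm^2)$ blue edges, so the naive count fails there. The paper's contradiction argument is insensitive to tie-breaking and handles all three claims with the same machinery, which is why it is used uniformly. For the claim in question, your proof stands.
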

\begin{proofclaim}
   For the sake of contradiction, assume that $e(G_R[X',Y']) \geq \alpha e(G[X',Y'])/2$.    We will show that $\Gamma_R$ contains an odd connected matching on at least $|X|+\alpha k/400$ vertices, in which case we can find a copy of $C_n$ in red by using Proposition~\ref{prop:cycles-in-blowups}, contradicting that $G$ contains no monochromatic $C_n$.
    
    Firstly, we will prove that $\Gamma_R(X,Y)$ is dense. As $G$ is $(\eta,p)$-uniform, we have that 
    \[e_G(X',Y')\geq (1-\eta)p|X'||Y'| \geq \frac{(1-\alpha')^2pN^2}{18}\geq \frac{pN^2}{36},\]
    provided $\eta,\alpha' \ll 1$. By discounting the edges from $G$ which are not present in $\Gamma$, we have 
    \[\frac 1pe_{G_R}(X',Y')-10\varepsilon N^2\le \sum_{(A,B)\in \Gamma_R[X,Y]}d_p(A,B)|A||B|\le e(\Gamma_R[X,Y])\left(\frac{N}{k}\right)^2,\]
    and thus
    \begin{equation}\label{eq:red-edges-crossing}e(\Gamma_R[X,Y])\ge \left(\frac{k}{N}\right)^2\cdot \frac{\alpha}{2}\cdot\left(\frac{N^2}{36}-10\varepsilon N^2\right)\ge \frac{\alpha k^2}{100}.
    \end{equation}
   We claim that~\eqref{eq:red-edges-crossing} implies $\Gamma_R[X,Y]$ contains a path $P$ with $\alpha k/100$ vertices. Indeed,~\eqref{eq:red-edges-crossing} implies that the average degree in $\Gamma_R[X,Y]$ is at least $\alpha k/100$ and thus there exists a subgraph $\Gamma'\subset \Gamma_R[X,Y]$ with $\delta(\Gamma')\ge \alpha k/200$. As $\Gamma'$ is bipartite and $\delta(\Gamma')\ge \alpha k/200$, we can greedily find $P\subset \Gamma'$. Set $\tilde{X} = X\setminus V(P)$, and let us show now that $\Gamma_R[\tilde{X}]$ is almost complete. As the colouring of $\Gamma$ is $\alpha'$-extremal, we have
  \begin{equation}
        \label{eq:edges-complement-gamma-red}
        e(\Gamma_R^c[X])\leq \alpha' \binom{|X|}{2} +5\varepsilon k^2\leq 2\alpha' k^2,
    \end{equation}
and thus 
\[e(\Gamma_R[\tilde{X}])\geq \binom{|X|}{2}-2\alpha'k^2\ge \binom{|\tilde{X}|}{2}-2\alpha'k^2.\]
Therefore, by Lemma~\ref{lemma:ErdosGallai}, we can find a cycle $C\subset \Gamma_R[\tilde{X}]$ such that 
\begin{equation}\label{eq:size-red-cycle}
|V(C)|\geq \frac{2e(\Gamma_R[\tilde{X}])}{|\tilde{X}|-1}\ge\frac{2}{|\tilde{X}|-1}\left(\binom{|\tilde{X}|}{2} - 2\alpha'k^2\right)\ge |\tilde{X}| - 20\alpha'k, \end{equation}
where we used that $|\tilde{X}|-1\geq k/5$ if $\alpha'$ is small enough. Let $P'$ be a maximal subpath of $P$ with both ends in $X$ such that both endpoints of $P'$ have more than $|V(C)|/2$ red neighbours in $V(C)$. If $P'$ is obtained by removing one end vertex at a time, one can conclude that $|V(P)\setminus V(P')|\leq 2|X\cap (V(P)\setminus V(P'))|+2$. Using the definition of $P'$ and~\eqref{eq:edges-complement-gamma-red}, we get
\[2\alpha' k^2 \geq |X\cap (V(P)\setminus V(P'))|\cdot \frac{|V(C)|}{2} \geq |X\cap( V(P)\setminus V(P'))| \cdot \frac{k}{10},\]
using that $|\tilde{X}|\geq k/4$ provided $\alpha'\ll\alpha$ are small enough. Therefore, we have $|V(P)\setminus V(P')|\leq 50\alpha' k$. Let $u$ and $v$ be the endpoints of $P'$.  Since both $u$ and $v$ have more than $|V(C)|/2$ red neighbours in $C$, there exists an edge $u'v'\in E(C)$ such that $uu', vv' \in \Gamma_R$, which allows us to find a cycle on at least
\begin{align*}
|C|+|P'| &\geq |\tilde{X}|-20\alpha'k+|P|-50\alpha' k\ge |X|+\frac{\alpha k}{200}-70\alpha' k\ge |X|+\frac{\alpha k}{400}
\end{align*}
vertices. Note that this cycle is a connected matching on at least $|V(C)|-1$ vertices. Moreover, as $\Gamma_R[V(C)]$ spans at least $\binom{|V(C)|}{2}-2\alpha' k^2$ edges, it is easy to see that it must be non-bipartite. Finally, we may find a monochromatic copy of $C_n$ by applying Proposition~\ref{prop:cycles-in-blowups} if we prove that $n \leq (1-50\varepsilon)(|X|+\alpha k/400)m$. Indeed, using that $|X|N\geq (1-\alpha')(1-\delta)kn$ (for both parities) and that $m\geq (1-2\varepsilon)N/k$, we get that 
\begin{equation}
    \label{eq:finally}
(1-50\varepsilon)|X|m \geq (1-50\varepsilon)(1-\alpha')(1-2\varepsilon)(1-\delta)n\geq (1-2\alpha')n, 
\end{equation}
provided that $\varepsilon\ll \delta\ll\alpha'$. Using that $(1-50\varepsilon)m \geq n/2k$ for small enough $\varepsilon$, we conclude that
\[ (1-50\varepsilon)\left(|X| + \frac{\alpha k}{400}\right)m \geq (1-2\alpha')n + \frac{\alpha}{800}n \geq n,\]
as long as $\varepsilon\ll\alpha'\ll\alpha$. This finishes the proof of this claim.
\end{proofclaim}

Now the proof splits according to the parity of $n$, due to the inherent difference between these two problems. In the odd case, the cycle cannot be embedded in the blue bipartite graph, while in the even case, it is a matter of space.
\begin{claim}
\label{claim:blue-edges-inside-odd}
If $n$ is odd, then $e(G_B[X']) < \alpha e(G[X'])/2$ and $e(G_B[Y']) < \alpha e(G[Y'])/2$.
\end{claim}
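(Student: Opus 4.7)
The plan is to prove the bound $e(G_B[X']) < \alpha e(G[X'])/2$ by contradiction, with the bound on $e(G_B[Y'])$ then following by an identical argument (the $\alpha'$-odd-extremal structure is symmetric in $X$ and $Y$). Supposing $e(G_B[X']) \geq \alpha e(G[X'])/2$, the goal is to exhibit a blue \emph{odd} connected matching in $\Gamma_B$ on enough vertices to apply Proposition~\ref{prop:cycles-in-blowups} and obtain a blue $C_n$ in $G$, contradicting our hypothesis.

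The three main steps run as follows. First, transferring the assumption to the reduced graph through the $(\eta,p)$-uniformity of $G$, exactly as in the derivation of~\eqref{eq:red-edges-crossing}, yields $e(\Gamma_B[X]) \geq c_0\alpha k^2$ for an absolute constant $c_0>0$; in particular there exists at least one blue edge $xy$ in $\Gamma_B[X]$, which will later witness oddness. Second, since the colouring of $\Gamma$ is $\alpha'$-odd-extremal, $\Gamma_B[X,Y]$ has at most $\alpha' k^2$ missing edges, so after discarding $O(\sqrt{\alpha'}\,k)$ vertices of atypically small blue degree on each side one obtains an almost-complete bipartite subgraph on what remains. A straightforward K\"onig or greedy argument then supplies a matching $M\subset \Gamma_B[X,Y]$ of size at least $(1-2\sqrt{\alpha'})k/2$, and all of its endpoints together with $x$ and $y$ lie in a single component of $\Gamma_B$. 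Oddness of this component is witnessed by a blue triangle $xyv$, where $v\in Y$ is a common blue neighbour of $x$ and $y$; such a $v$ exists because each of $x$ and $y$ has at most $\sqrt{\alpha'}\,|Y|$ blue non-neighbours in $Y$. Third, applying Proposition~\ref{prop:cycles-in-blowups} with $\ell=(n-1)/2$, using $m\geq (1-2\varepsilon)N/k$, $N\geq (1-\delta)(2n-1)$, and the hierarchy $\varepsilon \ll \delta \ll \alpha'\ll \alpha$, a routine size check analogous to~\eqref{eq:finally} verifies $\ell \leq (1-50\varepsilon)\cdot \tfrac{(1-2\sqrt{\alpha'})k}{2} \cdot m$ (the logarithmic lower bound on $\ell$ being automatic), yielding the sought blue copy of $C_n$.

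The main subtlety is securing \emph{oddness} of the connected matching: a matching inside $\Gamma_B[X,Y]$ alone would lie in a bipartite component, from which one could only hope to embed an even cycle in $G_B$, and the whole argument would collapse. This is exactly where the hypothesised blue edges inside $X$ enter, through the triangle $xyv$; without them the non-bipartiteness of the blue component could not be certified. Everything else is a direct repackaging of the steps used in the proof of Claim~\ref{claim:biparte:alpha}.
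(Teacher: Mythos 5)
Your argument is essentially correct, but it departs from the paper's route in one place worth noting. The paper invokes a result of Jackson to extract a single long cycle $C\subset\Gamma_B[X,Y]$ of length $\geq 2|Y|-20\alpha'k$, which simultaneously serves as the connected matching (take alternate edges) \emph{and} as the connectivity certificate, and then certifies non-bipartiteness by observing that $V(C)\cap X$ must contain one of the many blue edges inside $X$. You instead build the matching $M$ directly by a K\"onig/greedy argument in the nearly complete bipartite graph $\Gamma_B[X',Y']$ (after discarding vertices of atypically small blue degree), and you certify non-bipartiteness separately with a blue triangle $xyv$. Both routes yield a connected matching of size $(1-O(\sqrt{\alpha'}))k$, which comfortably satisfies the hypothesis of Proposition~\ref{prop:cycles-in-blowups} for $\ell=(n-1)/2$, so the overall structure is sound; your route is more elementary (no Jackson) at the cost of needing to separately verify that $M$, $x$, $y$, and $v$ all lie in a single component of $\Gamma_B$ (which does follow since all retained vertices have at most $O(\sqrt{\alpha'}k)$ blue non-neighbours on the opposite side).

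Two small points that should be made explicit. First, the blue edge $xy$ inside $X$ cannot be taken arbitrarily: you must choose it among the at-least-$c_0\alpha k^2-2\sqrt{\alpha'}k^2>0$ blue edges in $X$ whose endpoints both avoid the discarded ``bad'' set, so that $x,y$ each have $\leq\sqrt{\alpha'}k$ blue non-neighbours in $Y$ and the common-neighbour argument for $v$ goes through. (The inequality uses $\alpha'\ll\alpha^2$, which the hierarchy permits.) Second, your stated matching size $(1-2\sqrt{\alpha'})k/2$ is too optimistic after tracking the removal of $O(\sqrt{\alpha'}k)$ vertices from each side and the $\sqrt{\alpha'}k$ non-neighbours per remaining vertex; a careful greedy count gives something like $(1-7\sqrt{\alpha'})k/2$. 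This does not affect the conclusion, since the final size check has ample slack, but the constant should be corrected or replaced by the cleaner $(1-O(\sqrt{\alpha'}))k/2$.
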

\begin{proofclaim} Suppose by contradiction that $e(G_B[X'])\geq \alpha e(G[X'])/2$ (the case obtained by replacing $X'$ with $Y'$ is analogous).
    We will use this fact to show that $\Gamma_B$ contains an odd connected matching with at least $2|Y|-20\alpha' k$ vertices, which we may use to find a blue copy of $C_n$ in $G$.
    
    We first prove that $\Gamma_B[X]$ is dense. Again, since $G$ is $(\eta,p)$-uniform, we have that
\[e(G[X'])\geq \frac{p}{2}\binom{|X'|}{2} \geq \frac{p|X'|^2}{8}\geq \frac{pN^2}{64}.\]
As in \eqref{eq:red-edges-crossing}, this lower bound implies that $e(\Gamma_B[X])\geq \alpha k^2/100$, and, by the same argument as in \eqref{eq:edges-complement-gamma-red}, we have that $e(\Gamma_B^c[X,Y])\leq 2\alpha' k^2$. Then, by Lemma~\ref{lemma:jackson}, we can find a cycle $C\subset \Gamma_B[X,Y]$ with 
\begin{align*}
    |V(C)|\geq \frac{2e(\Gamma_B[X,Y])}{|X|}-O(1)\ge \frac{2}{|X|}\cdot \left(|X||Y| - 2\alpha'k^2 \right) -O(1) \geq 2|Y| - 20\alpha'k,
\end{align*}
using that $|X|\geq k/4$ and that $k$ is large. We claim that $C$ belongs to a non-bipartite component of $\Gamma_B$. Note that because of the lower bound on $|Y|$, we have that $|X|-|Y| \leq 2\alpha' k$ and then
\[|X\setminus V(C)|= |X|-|V(C)|/2 \leq |X|-|Y|+10\alpha' k \leq 12\alpha' k.\]
Therefore, the number of edges of $\Gamma_B[X]$ touching $X\setminus V(C)$ is at most $12\alpha' k^2$. Since $\alpha'\ll \alpha$, we have $e(\Gamma_B[X])\geq \alpha k^2/100 > 12\alpha' k^2$ and therefore $V(C)\cap X$ contains a blue edge. This implies that $V(C)$ induces a large odd-matching in $\Gamma_B$.

Now we may apply Proposition~\ref{prop:cycles-in-blowups} to find a copy of $C_n$ in $G_B$, which leads to a contradiction. It is thus enough to check that $n\leq (1-50\varepsilon)(2|Y|-20\alpha' k)m$. Indeed, as $2|Y|\geq (1-\alpha')k$ and $m\geq (1-2\varepsilon)(1-\delta)2n/k$, we have
\[(1-50\varepsilon)(2|Y|-20\alpha' k)m \geq (1-30\alpha')2n\geq n,\]
since $\varepsilon\ll \delta\ll\alpha' \ll 1 $.
Thus, we have shown that $e(G_B[X']) < \alpha e(G[X'])/2$.
\end{proofclaim}
Now we move to the even case.
\begin{claim}
If $n$ is even, then $e(G_B[X']) < \alpha e(G[X'])/2$.
\end{claim}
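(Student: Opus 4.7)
The plan is to argue by contradiction along the lines of Claim~\ref{claim:blue-edges-inside-odd}: assume $e(G_B[X']) \geq \alpha e(G[X'])/2$, build a large blue connected matching in $\Gamma_B$, and invoke Proposition~\ref{prop:cycles-in-blowups} to produce a blue copy of $C_n$ in $G$, contradicting the hypothesis. The key difference from the odd case is that, since $n$ is even, a blue $C_n$ can live entirely inside a bipartite component, so the natural bipartite matching of size $2|Y|$ inside $\Gamma_B[X,Y]$ is only barely short of what Proposition~\ref{prop:cycles-in-blowups} requires; the argument must genuinely exploit the surplus blue edges inside $X$.

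First, by $(\eta,p)$-uniformity and the same transfer as in Claim~\ref{claim:blue-edges-inside-odd}, the assumed density of blue edges in $G_B[X']$ yields $e(\Gamma_B[X]) \geq \alpha k^2/100$. On the other hand, $\alpha'$-even-extremality of $\Gamma$ gives $e(\Gamma_B^c[X,Y]) \leq 2\alpha' k^2$, so $\Gamma_B[X,Y]$ is almost complete bipartite, and its edges (together with every edge of $\Gamma_B[X]$) all lie in a single large connected component of $\Gamma_B$.

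The main step is to exhibit a blue connected matching on at least $\frac{2k}{3}(1 + O(\varepsilon+\delta))$ vertices, the threshold required by Proposition~\ref{prop:cycles-in-blowups} to capture a $C_n$ with $n$ even (using $m \geq (1-2\varepsilon)N/k$ and $N \geq (1-\delta)(3n/2-1)$). To this end, extract first a matching $M \subseteq \Gamma_B[X]$ of size $\alpha k/200$, which exists since $\Gamma_B[X]$ has maximum degree at most $k$ and at least $\alpha k^2/100$ edges. Set $X_0 = X \setminus V(M)$; the extremal gap $|X|-|Y| = \Omega(k)$ absorbs the removal of $V(M)$, so $|X_0| - |Y| = \Omega(k)$, and since $\Gamma_B[X_0, Y]$ is still missing at most $2\alpha' k^2$ edges, a standard defect-Hall argument (discarding the $O(\alpha' k)$ vertices of $Y$ with low blue-degree into $X_0$ and verifying Hall's condition trivially by degree for small $|S|$ and by a global non-edge count for large $|S|$) produces a matching $M' \subseteq \Gamma_B[X_0, Y]$ saturating all but $O(\alpha' k)$ vertices of $Y$. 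Since every edge of $M \cup M'$ lies in the common blue component, $M \cup M'$ forms a connected matching on at least
\[ 2|M| + 2|M'| \geq \frac{\alpha k}{100} + 2|Y| - O(\alpha' k) \geq \frac{2k}{3} + \Omega(\alpha k) \]
vertices, which beats the threshold comfortably since $\varepsilon + \delta + \alpha' \ll \alpha$.

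Applying Proposition~\ref{prop:cycles-in-blowups} to this connected matching, after checking the cycle-length condition $n/2 \leq (1-50\varepsilon)(|M|+|M'|)m$ by a calculation essentially identical to~\eqref{eq:finally}, produces the desired blue $C_n$ in $G$, contradicting the assumption. The main conceptual obstacle, absent from the odd case where the target size $\approx 2k/3$ was beaten by a constant factor using only the bipartite structure, is that here the target threshold coincides asymptotically with $2|Y|$; one genuinely needs the $\alpha k^2/100$ surplus of blue edges inside $X$, and the hierarchy $\alpha' \ll \alpha$ is precisely what ensures the boost $\Omega(\alpha k)$ dominates the $O(\alpha' k)$ loss from bipartite saturation.
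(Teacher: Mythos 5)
Your overall strategy is correct and uses a genuinely different construction than the paper's. Where the paper builds the connected matching by (i) finding a path $P$ in $\Gamma_B[X]$, (ii) extracting a near-spanning cycle $C$ in $\Gamma_B[\tilde X, Y]$ via Jackson's theorem, and (iii) splicing a subpath $P'$ of $P$ into $C$ to form one long cycle (from which the connected matching is read off, connectivity being automatic), you instead take a matching $M$ inside $\Gamma_B[X]$ and a defect-Hall matching $M'$ in $\Gamma_B[X_0, Y]$ almost saturating $Y$, then take the union. Both constructions exploit the quantitative insight you correctly articulate: the even threshold $\approx 2k/3$ asymptotically coincides with $2|Y|$, so the $\Omega(\alpha k^2)$ surplus of blue edges inside $X$ is essential, and the hierarchy $\alpha' \ll \alpha$ is what lets the $\Omega(\alpha k)$ boost from $M$ dominate the $O(\alpha' k)$ deficits. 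The calculation of the threshold and the size bound $2|M|+2|M'|$ are fine.

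There is one gap in the connectivity step. You assert that the edges of $\Gamma_B[X,Y]$ ``together with every edge of $\Gamma_B[X]$ all lie in a single large connected component of $\Gamma_B$''; this is not literally true, since an edge of $\Gamma_B[X]$ whose two endpoints both happen to have no blue neighbours in $Y$ could be isolated from the almost-complete bipartite core, and then $M\cup M'$ need not be connected. The fix is easy but must be stated: by $e(\Gamma_B^c[X,Y]) \le 2\alpha'k^2$ and $|Y| = \Omega(k)$, at most $O(\alpha' k)$ vertices of $X$ have fewer than $|Y|/2$ blue neighbours in $Y$; these touch at most $O(\alpha' k^2) \ll \alpha k^2/100$ edges of $\Gamma_B[X]$, so $M$ can be chosen to avoid them entirely, and then each endpoint of $M$ is adjacent in blue to many vertices of $Y$, hence joined to the bipartite component containing $M'$. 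With that repair your proof goes through; the paper's cycle-plus-path construction sidesteps this bookkeeping because the object built there is connected by design.
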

\begin{proofclaim}
    By contradiction, let us assume that $e(G_B[X']) \geq \alpha e(G[X'])/2$. Again, our aim is to find a blue copy of $C_n$ in $G$.
    First, we will show that $\Gamma_B$ contains a connected matching on at least $2|Y|+ \alpha k/400$ vertices.
    Similarly as in the previous cases, one can show that $e(\Gamma_B[X]) \geq \alpha k^2/100$ and that $e(\Gamma_B^c[X,Y]) \leq 2\alpha' k^2$. Therefore, we may find a path $P\subset \Gamma_B[X]$ with $\alpha k/100$ vertices. Let $\tilde{X}=X\setminus V(P)$. Using Lemma~\ref{lemma:jackson}, we can find a cycle $C\subset \Gamma_B[\tilde{X},Y]$ such that 
    \[|V(C)|\geq  \frac{2e(\Gamma_B[\tilde{X},Y])}{|\tilde{X}|}-O(1)\ge \frac{2}{|\tilde{X}|} \left(|\tilde{X}||Y| - 2\alpha' k^2\right) - O(1) \geq 2|Y| - 30\alpha'k,\]
    \noindent  using that $|\tilde{X}|\geq k/4-|P| \geq k/5$ and that $k\geq 1/\varepsilon$. Now we take a maximal path $P'\subset P$ such that both endpoints of $P'$ have at least $|V(C)|/4$ blue neighbours in $V(C)\cap Y$. By the upper bound on $e(\Gamma_B^c[X,Y])$, we have
    \[2\alpha' k^2 \geq |V(P)\setminus V(P')| \cdot \frac{|V(C)|}{4} \geq |V(P)\setminus V(P')|\cdot \frac{k}{15},\]
and thus $|V(P)\setminus V(P')|\leq 30\alpha' k$. By the same reasoning as in Claim~\ref{claim:blue-edges-inside-odd}, there exist vertices $u',v'\in V(C)\cap Y$ at distance $2$ in $C$ such that the ends $u,v$ of $P'$ satisfy $uu',vv' \in \Gamma_B$. This yields a cycle on $|V(C)|+|V(P')|-1$ vertices. Since this cycle might be odd, we find a connected matching with at least
    \[2|Y|-30\alpha' k + \frac{\alpha}{160}k - 30\alpha'k - 2 \geq 2|Y| + \frac{\alpha}{400}k\]
vertices. Now, by checking that $2|Y|N\geq (1-\alpha')(1-\delta)kn$, the proof follows by applying Proposition~\ref{prop:cycles-in-blowups} (doing the same calculation as in \eqref{eq:finally}).
\end{proofclaim}
To finish the proof, we set $X''=X'\cup V_0$ and show that the partition $X''\cup Y'$ verifies that the colouring is $\alpha$-even/odd-extremal. Let $n$ be odd, and recall that each cluster of $\Gamma$ has $m\geq (1-2\varepsilon)N/k$ vertices. By the definition of $X'$ and $Y'$, 
\[|X'|,|Y'|\ge (1-\alpha')\frac{k}{2}m \geq (1-\alpha')(1-2\varepsilon )\frac{N}{2}\geq (1-\alpha)\frac{N}{2},\]
since $\varepsilon \ll\alpha'\ll \alpha$. By Claim~\ref{claim:blue-edges-inside-odd}, we have that $e(G_B[Y'])\leq \alpha e(G[Y'])$. Now let $E_0$ be the set of edges touching $V_0$.  Since $G$ is  $(\eta,p)$-uniform, we have that $|E_0|\leq 2\varepsilon pN^2$, and therefore 
\[e(G_B[X''])\leq e(G_B[X']) + |E_0| \leq \frac{\alpha}{2}e(G[X'']) + 2\varepsilon pN^2 \leq \alpha e(G[X'']),\]
as $\varepsilon \ll \alpha \ll 1$ and since $e(G[X'']) \geq (1-\eta)p\binom{|X''|}{2} \geq pN^2/16$. A similar argument shows that $e(G_R[X'',Y'])\leq \alpha e(G[X'',Y'])$, which proves that the colouring is $\alpha$-odd-extremal. The case when $n$ is even follows essentially the same proof.\end{proof}

\section{Rotation-extension} \label{section:rotation}

\subsection{Boosters in expanders}

To find long cycles in expander subgraphs, we will use the well-known rotation-extension technique pioneered by Pósa~\cite{Posa1976}.

A pair $\{u,v\} \in V^{(2)}$ is a \emph{booster} in a graph $G$, if $G + uv$ is Hamiltonian or its longest path is longer than that of $G$.
Observe that if $G$ is connected and non-Hamiltonian and $P$ is a longest path in $G$ from $u$ to $v$, then $\{u,v\}$ is a booster.
Indeed, $P + \{u,v\}$ closes to a cycle $C$.
If $|V(C)| = |V(G)|$, then we have a Hamilton cycle; otherwise, as $G$ is connected,  $C$ is adjacent to a vertex outside of $V(C)$, so we can find a path $P'$ which contains all of $V(C)$ and has more vertices than $|V(C)| = |V(P)|$.

\begin{lemma} \label{lemma:tapaoenboosters}
     Let $d, k, M, m, n$ satisfy $d \geq 2$, $k+1 \geq m$, $m \leq M$ and $M \leq n/4$.
     Suppose $G$ is an $n$-vertex graph which is a $(k,d)$-expander and $(m,M)$-joined.
     If $G$ is not Hamiltonian, then $G$ has at least $\frac{1}{16} \binom{n}{2}$ boosters.
\end{lemma}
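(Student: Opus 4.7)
The plan is to use Pósa's rotation-extension technique, leveraging the expansion and the joined property to control the closure of rotation endpoints. First, Lemma~\ref{lemma:improvedexpansion} upgrades $G$ to a $((n-M+1)/(d+1), d)$-expander; with $d \geq 2$ and $M \leq n/4$, this in particular makes $G$ an $(n/4, 2)$-expander, and it also forces $G$ to be connected (any connected component $C$ has $N(C) = \emptyset$, so the expansion bound forces $|C| > n/4 \geq M$, and the joined property then forbids two such components).

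Assuming $G$ is not Hamiltonian, I would fix a longest path $P$ with endpoints $u, v$ and iterate Pósa rotations at $v$ with $u$ fixed to obtain the rotation closure $R_v \subseteq V(G)$: each $v' \in R_v$ is the endpoint of some longest path from $u$. Pósa's lemma gives $|N(R_v)| \leq 2|R_v| - 2$ (using the paper's external-neighborhood notation). If $|R_v| \leq k$, the $(k,d)$-expansion would give $|N(R_v)| \geq 2|R_v|$, contradicting Pósa; hence $|R_v| \geq k+1 \geq m$, and Lemma~\ref{lemma:improvedexpansion} yields $|R_v| + |N(R_v)| \geq n - M + 1$. Combined with the Pósa bound, this forces $|R_v| \geq (n-M+1)/3 \geq n/4$.

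For each $v' \in R_v$ I would repeat the procedure, rotating at the $u$-end with $v'$ fixed, to obtain $R_{v'} \subseteq V(G)$ with $|R_{v'}| \geq n/4$ by the same argument. For any $u' \in R_{v'}$ there is a longest path $P'$ from $u'$ to $v'$, and the edge $u'v'$ must be absent from $G$ (else $P$ would not be longest). Adding $u'v'$ closes $P'$ into a cycle $C$, which either already spans $V(G)$ (so $G + u'v'$ is Hamiltonian) or, by connectivity, has a vertex outside adjacent to it, giving a strictly longer path in $G + u'v'$. In either case, $\{u', v'\}$ is a booster. Counting: the ordered pairs $(v', u')$ with $v' \in R_v$ and $u' \in R_{v'}$ number at least $(n/4)^2 = n^2/16$, and each unordered pair is counted at most twice, yielding at least $n^2/32 \geq \binom{n}{2}/16$ distinct boosters.

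The main obstacle is the lower bound $|R_v| \geq n/4$: since the expansion factor may be as small as $d = 2$, matching Pósa's slope, the argument relies on the refined Pósa bound $|N(R_v)| \leq 2|R_v| - 2$ (rather than merely $\leq 2|R_v|$) to rule out small closures before Lemma~\ref{lemma:improvedexpansion} takes over at $|R_v| \geq m$. Everything else is standard rotation-extension bookkeeping.
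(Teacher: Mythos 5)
Your proof is correct and follows essentially the same route as the paper's: establish connectivity from expansion plus joinedness, use Pósa rotation together with Lemma~\ref{lemma:improvedexpansion} to force $|S(P,u)| \geq (n-M+1)/3 \geq n/4$, and then double-count over the $\geq n/4$ endpoints of longest paths. One small slip: the correct Pósa bound for $S = S(P,u)$ (excluding $u$) is $|N(S)| \leq 2|S|-1$, not $2|S|-2$, since $|S^+| \leq |S|-1$ and $|S^-| \leq |S|$ with no further forced overlap; this is harmless for your argument, which only needs the strict inequality $|N(S)| < 2|S|$ to clash with the expansion bound $|N(S)| \geq 2|S|$.
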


To prove \cref{lemma:tapaoenboosters}, we recall the basics of the rotation-extension technique.
Let $P$ be a path in a graph $G$, whose endpoints are $u$ and $v$, and we consider its vertices to be ordered so that $u$ is its first vertex.
Given $x \in V(P)$, we write $x^{-}$ for the vertex before $x$ on $P$, and $x^{+}$ for the vertex after $x$ on $P$, if they exist.
For $X \subseteq V(P)$, we write $X^{-} = \{ x^{-} : x \in X \}$ and $X^{+} = \{ x^{+} : x \in X \}$. Note that if $x \in V(P)$ is a neighbour of $v$, then $P - xx^{+} + vx$ is a path in $G$ from $u$ to $x^{+}$ whose vertex set is $V(P)$.
We say such a path is obtained from $P$ by an \emph{elementary rotation}.
A path obtained from $P$ by a sequence of elementary rotations, with $u$ fixed, is a path \emph{derived} from $P$.
The set of ending vertices of paths derived from $P$, including $v$, will be denoted by $S(P, u)$.
Note that $S(P, u) \subseteq V(P)$, and that for every $w \in S(P,u)$ there exists a path in $G$ of the same length as $P$ with endpoints $u$ and $w$.
The following result appears in~\cite[Lemma 2.6]{BBDK2006}.

\begin{lemma} \label{lemma:rotation}
    Let $P$ be a longest path in a graph $G$ with endpoints $u$ and $v$, and let $S = S(P,u)$.
    Then $N(S) \subseteq S^{+} \cup S^{-}$.
    In particular (since $v^+$ does not exist), $|N(S)| < 2 |S|$.
\end{lemma}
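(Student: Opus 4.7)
The plan is to derive $N(S) \subseteq S^{+} \cup S^{-}$ from the maximality of $P$ together with a careful inductive analysis of how elementary rotations interact with the original ordering on $P$; the counting bound then follows for free.

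\textbf{Step 1 (Locality of $N(S)$).} I would first establish that $N(w) \subseteq V(P)$ for every $w \in S$. Fixing any derived path $P_w$ from $u$ to $w$, we have $V(P_w) = V(P)$ and $P_w$ has the same length as $P$. If $y \in N(w) \setminus V(P)$, then $P_w + wy$ is a path in $G$ strictly longer than $P$, contradicting that $P$ is longest.

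\textbf{Step 2 (Placement in $S^{+} \cup S^{-}$).} Fix $w \in S$ and $y \in N(w)$, so $y \in V(P)$ by Step~1. The key tool is that whenever $y$ is not the predecessor of $w$ on a derived path $P_w$, the elementary rotation along $wy$ produces a new derived path whose endpoint is the successor of $y$ on $P_w$, placing that successor into $S$. In the base case $w = v$, $P_w = P$ and successor on $P_w$ coincides with successor on $P$, so either $y = v^{-}$ (and then $y^{+} = v \in S$, so $y \in S^{-}$) or the rotation gives $y^{+} \in S$ (again $y \in S^{-}$). For the general case I would induct on the number of rotations producing $P_w$, maintaining as invariant a description of how the predecessor/successor relation on the current derived path relates to that on $P$; concretely, derived paths decompose into alternating forward and backward runs along $P$, and this structure is preserved by each rotation. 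The edge case $y = w^{-P_w}$ is handled by tracing how the edge $wy$ entered $P_w$: either it was an edge of $P$ to begin with (in which case $y \in \{w^{-}, w^{+}\}$ and $w \in S$ directly witnesses $y \in S^{-} \cup S^{+}$), or it was introduced by some earlier rotation, which by the invariant already placed one of $y^{+}, y^{-}$ into $S$.

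\textbf{Step 3 (Counting).} The cardinality bound is then immediate: the endpoint $v \in S$ has no successor on $P$, so $|S^{+}| \leq |S| - 1$, while $|S^{-}| \leq |S|$ trivially. Hence $|N(S)| \leq |S^{+} \cup S^{-}| \leq |S^{+}| + |S^{-}| \leq 2|S| - 1 < 2|S|$.

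\textbf{Main obstacle.} The subtle point is Step~2. Elementary rotations are naturally formulated in terms of positions on the current derived path, whereas the conclusion concerns positions on the original $P$. A naive induction stalls because a single rotation can reshuffle the ordering noticeably. The real work is in pinning down a sufficiently strong invariant on the structure of derived paths (such as the decomposition into alternating forward/backward $P$-arcs) which lets one translate "successor on $P_w$" back into information about $S^{+}$ and $S^{-}$ on $P$. Once this invariant is set up correctly, both the rotation case and the borderline case $y = w^{-P_w}$ resolve uniformly.
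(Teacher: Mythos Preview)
The paper does not give its own proof of this lemma; it is quoted from Brandt, Broersma, Diestel and Kriesell, so there is no in-paper argument to compare against. Your outline is the standard rotation argument and Steps~1 and~3 are fine, but Step~2 as written has a soft spot: the run decomposition you propose as the inductive invariant is true but not sharp enough on its own, and you have not handled the case where $y$ lies at a run boundary of $P_w$ other than as the immediate $P_w$-predecessor of $w$. In that situation the $P_w$-successor of $y$ need not be $y^{-}$ or $y^{+}$, so the rotation does not directly place one of $y^{-}, y^{+}$ into $S$, and your ``trace back how the edge entered'' argument is only spelled out for the predecessor edge.

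The clean fix is to sharpen the invariant and prove it over all derived paths simultaneously rather than along a single rotation chain: \emph{for every $y$ with $y, y^{-}, y^{+} \notin S$ and every derived path $P'$, the two $P'$-neighbours of $y$ are exactly $y^{-}$ and $y^{+}$.} The inductive step is immediate, since a rotation $P' \mapsto P' - zz' + wz$ (with $w$ the $P'$-endpoint and $z'$ the $P'$-successor of $z$) only alters the path-neighbourhoods of $w$, $z$, $z'$; here $w, z' \in S$, and $y = z$ would force the $P'$-successor $z' \in \{y^{-}, y^{+}\}$ by the inductive hypothesis, contradicting $z' \in S$. With this invariant the boundary case evaporates (such a $y$ is always interior to a run), and the conclusion is one line: for $y \in N(S)$ with $y^{-}, y^{+} \notin S$, pick $w \in S \cap N_G(y)$ and a derived path $P_w$ ending at $w$; since $w \notin \{y^{-}, y^{+}\}$ the rotation along $wy$ is nontrivial and its new endpoint lies in $\{y^{-}, y^{+}\} \cap S$, a contradiction.
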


\begin{proof}[Proof of \cref{lemma:tapaoenboosters}]
    First, we observe that $G$ is connected.
    Indeed, suppose not, and let $\{A, B\}$ be a partition of $V(G)$ without crossing edges.
    Suppose that $|A| \leq |B|$ and in particular that $|B| \geq n/2 \geq M$.
    Since $N(A) \setminus A = \emptyset$ and since $G$ is $(k,d)$-expander, then $|A| \geq k+1 \geq m$.
    Hence, any pair of sets $A' \subseteq A$ and $B' \subseteq B$ of sizes $m, M$ respectively have $e(A',B')=0$, which is a contradiction to the fact that $G$ is $(m, M)$-joined. This shows that $G$ is connected.

    Next, we show that for each vertex $u$ which is a start of a longest path $P$ in $G$, the bound $|S(P,u)| \geq (n-M+1)/3$ holds.
    This is enough to conclude.
    Indeed, we have that for each $v \in S(P,u)$, the pair $\{u,v\}$ is a booster (since $G$ is connected). 
    Thus we would have at least $(n-M+1)/3 \geq n/4$ booster pairs containing $u$, but also at least $n/4$ other endpoints of a longest path in $G$.
    These two pieces of information together imply that the number of boosters is at least $\frac{1}{16}\binom{n}{2}$, as desired.
    
    Let $P$ be a longest path in $G$ with endpoints $u$ and $v$, and let $S = S(P,u)$.
    As discussed, it is enough to show that $|S| \geq (n-M+1)/3$.
    By \cref{lemma:rotation}, we have $|N(S)| < 2|S|$.
    Since $G$ is a $(k,2)$-expander, this implies $|S| \geq k+1 \geq m$.
    Then Lemma~\ref{lemma:improvedexpansion} implies that $|N(S)| \geq n - M -|S|+ 1$.
    Therefore, we deduce $n - M + 1 \leq 3 |S|$ and thus $|S| \geq (n-M+1)/3$, as required.
\end{proof}

\subsection{Boosters in bipartite expanders}

We need to tailor the rotation-extension technique to work in bipartite graphs.
This has been done for balanced bipartite graphs, e.g. by Frieze~\cite{Frieze1985} and Bollobás and Kohayakawa~\cite{BollobasKohayakawa1991}, our treatment here is slightly different to allow for unbalanced graphs.

Given a bipartite graph $G$, with parts $V_1$ and $V_2$ such that $|V_1| \geq |V_2|$, we say that a pair $\{u,v\} \notin E(G)$ with $u \in V_1$, $v \in V_2$ is a \emph{bipartite booster for $G$}, if $G+uv$ contains a cycle of length $2|V_2|$ or its longest path is longer than that of $G$.
Given a set of vertices $S$, an \emph{$S$-path} is a path whose endpoints are in $S$.

\begin{lemma} \label{lemma:prebipartiteboosters}
    Let $d, k, m, n$ satisfy $d \geq 2$, $k+1 \geq m$ and $m \leq n$.
    Suppose $G$ is a bipartite graph, with parts $V_1$ and $V_2$ such that $|V_1| \geq |V_2|+m = n+m$, which is a $(k,d)$-bipartite-expander and $m$-bipartite-joined.
    Then,
    \begin{enumerate}
        \item \label{item:prebipartiteboosters-Gconnected} $G$ is connected,
        \item \label{item:prebipartiteboosters-V2path} if $P$ is a longest path in $G$, then $P$ is a $V_1$-path,
        \item \label{item:prebipartiteboosters-booster} if $G$ does not contain a $2|V_2|$-length cycle, then for any longest $V_1$-path $P = v_0 v_1 \dotsb v_\ell$, the pair $\{ v_1, v_{\ell} \}$ is a bipartite booster.
    \end{enumerate}
\end{lemma}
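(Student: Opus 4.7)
The plan is to handle the three parts in sequence, with the difficulty concentrated in~\ref{item:prebipartiteboosters-booster}.

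For~\ref{item:prebipartiteboosters-Gconnected}, suppose by contradiction that $G$ is disconnected and choose a partition $V(G)=X\cup Y$ with no crossing edges, setting $X_i=X\cap V_i$ and $Y_i=Y\cap V_i$. Since $e(X_1,Y_2)=e(Y_1,X_2)=0$, the $m$-bipartite-joined property forces one of each pair $\{|X_1|,|Y_2|\}$ and $\{|Y_1|,|X_2|\}$ to be strictly less than $m$. After relabelling $X\leftrightarrow Y$ and $V_1\leftrightarrow V_2$ if necessary we may assume $|X_1|,|X_2|<m\leq k+1$; if additionally $X_1\neq\emptyset$, bipartite expansion (with $d\geq 2$) gives $|X_2|\geq |N(X_1)|\geq 2|X_1|$, and symmetrically $|X_1|\geq 2|X_2|\geq 4|X_1|$, a contradiction.

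For~\ref{item:prebipartiteboosters-V2path}, suppose a longest path $P=v_0v_1\cdots v_\ell$ has an endpoint in $V_2$. Rotating $P$ with $v_0$ fixed, bipartite parity forces the derived endpoint set $S=S(P,v_0)$ to lie in the same class as the other endpoint of $P$; combining \cref{lemma:rotation} with bipartite expansion gives $|S|\geq k+1\geq m$. Since $P$ meets $V_1$ in at most $|V_2|$ vertices and $|V_1|\geq |V_2|+m$, we have $|V_1\setminus V(P)|\geq m$, and $m$-bipartite-joined supplies an edge from $S$ to $V_1\setminus V(P)$; extending the corresponding derived path of length $\ell$ along this edge gives a path of length $\ell+1$, contradicting the maximality of $P$.

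For~\ref{item:prebipartiteboosters-booster}, let $G'=G+v_1v_\ell$ (which is still $(k,d)$-bipartite-expander and $m$-bipartite-joined). The path $P$ together with the new edge closes to the cycle $C=v_1v_2\cdots v_\ell v_1$ of length $\ell$ in $G'$, so if $\ell=2|V_2|$ we are done. Otherwise $\ell<2|V_2|$, and the plan is to produce a path of length $>\ell$ in $G'$ by applying rotation-extension to the `rotated' path $P^\star=v_0v_1v_\ell v_{\ell-1}\cdots v_2$, which has length $\ell$ in $G'$ with both endpoints $v_0,v_2\in V_1$. Rotating $P^\star$ with $v_0$ fixed in $G'$ yields, as in~\ref{item:prebipartiteboosters-V2path}, a set $T\subseteq V_1\cap V(P)$ with $|T|\geq m$; whenever $|V_2\setminus V(P)|\geq m$, the $m$-bipartite-joined property gives an extending edge from $T$ to $V_2\setminus V(P)$ and we finish.

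The main obstacle is the tight case $|V_2\setminus V(P)|<m$, where $P$ covers nearly all of $V_2$ and the joined step no longer applies directly. The remedy should be to iterate rotations at both endpoints of $P^\star$ to obtain more flexibility, and then exploit the cycle $C$ together with the pendant vertex $v_0$ (adjacent to $v_1\in V(C)$) to either route through the few unused $V_2$-vertices and produce the required longer path, or directly patch the structure into a $2|V_2|$-cycle in $G'$, which automatically witnesses the booster property.
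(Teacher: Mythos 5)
Your argument for part~(iii) is where the real gap lies. In the case $|V_2 \setminus V(P)| \geq m$ your rotation of $P^\star = v_0 v_1 v_\ell v_{\ell-1} \dotsb v_2$ in $G' = G + v_1 v_\ell$ works: if $P^\star$ were a longest path in $G'$ then the derived endpoint set $T \subseteq V_1$ satisfies $|N(T)| < 2|T|$, expansion forces $|T| \geq m$, and joinedness to $V_2 \setminus V(P)$ contradicts $N(T) \subseteq V(P^\star)$. But in the tight case $|V_2 \setminus V(P)| < m$ you only gesture at a remedy (``iterate rotations at both endpoints \dots or directly patch the structure into a $2|V_2|$-cycle''), and that is precisely where the content of the lemma lives; as it stands this is not a proof. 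The paper avoids the case split entirely and uses the pieces you already have in a different way: since $\ell < 2|V_2|$ there is some $u \in V_2 \setminus V(P)$, and by part~(i) (connectivity) there is a shortest path $Q$ from $u$ to $V(P)$. Since $P$ is a longest path in $G$, $Q$ cannot terminate at $v_0$; hence $Q$ lands on the cycle $C = v_1 \dotsb v_\ell v_1$, and concatenating $Q$ with $C$ produces in $G'$ a path of length at least $\ell$ whose endpoint $u$ lies in $V_2$. Applying part~(ii) to $G'$, such a path is never a longest path, so $G'$ has a path strictly longer than $\ell$ and $\{v_1, v_\ell\}$ is a bipartite booster. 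This single argument replaces both your sub-cases and makes essential use of connectivity, which your outline never invokes in part~(iii).

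There is also a smaller but genuine gap in your proof of part~(i). The reduction ``after relabelling we may assume $|X_1|, |X_2| < m$'' is not exhaustive: the joinedness constraints allow the case $|X_2|, |Y_2| < m$ (both halves of $V_2$ small), and this cannot be brought into your normal form by swapping $X \leftrightarrow Y$ or $V_1 \leftrightarrow V_2$, because after the second swap the contradiction you need is $|V_2| \geq 2m$, and the hypotheses give only $m \leq n = |V_2|$, not $n \geq 2m$. In that remaining case one of $|X_1|, |Y_1|$ is at least $m$ and may exceed $k$, so bipartite expansion cannot be applied to it directly. The fix, used in the paper, is to apply expansion to a subset $X_1' \subseteq X_1$ of size exactly $|X_2| < m \leq k+1$; since $N(X_1') \subseteq X_2$ this forces $|X_2| \geq d|X_2| \geq 2|X_2|$, a contradiction once $X_2 \neq \emptyset$ (and the case $X_2 = \emptyset$ is immediate from expansion on a single vertex of $X_1$).
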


\begin{proof}
    To see~\ref{item:prebipartiteboosters-Gconnected}, suppose there exists a partition $\{A,B\}$ of $V(G)$ without crossing edges.
    Suppose $|A \cap V_1| \leq |B \cap V_1|$.
    In particular, $|B \cap V_1| \geq m$.
    Since $G$ is $m$-bipartite-joined, we see $|N(B \cap V_1)| > |V_2| - m$ and thus $|A \cap V_2| < m$.
    Since $G$ is a $(k,d)$-bipartite-expander, we have $|N(A \cap V_2)| \geq d|A \cap V_2| \geq |A \cap V_2|$.
    Select $A' \subseteq A \cap V_1$ of size precisely $|A \cap V_2| < m$. Then we have $N(A') \subseteq A \cap V_2$, but $|A \cap V_2| \geq |N(A')| \geq 2 |A'| = 2 |A \cap V_2|$, a contradiction.
    
    To see~\ref{item:prebipartiteboosters-V2path}, let $P$ be a longest path in $G$ and suppose it is not a $V_1$-path.
    Let $u, v$ be its endpoints, suppose $v \in V_2$, and let $S = S(P,u)$.
    Note that $S \subseteq V_2$, since $v \in V_2$.
    By \cref{lemma:rotation}, $|N(S)| < 2|S|$.
    Since $G$ is $(k,d)$-bipartite-expander and $d \geq 2$, we deduce $|S| \geq k+1 \geq m$.
    Note that $|V_1 \setminus V(P)| \geq |V_1| - |V_2| \geq m$.
    Since $G$ is $m$-bipartite-joined, there exists an edge between $S$ and $V_1 \setminus V(P)$.
    Hence, there is a longest path $P'$ with an endpoint having a neighbour outside of $V(P')$, which contradicts the maximality of~$P$.
    
    Finally, to see~\ref{item:prebipartiteboosters-booster}, let $P = v_0 v_1 \dotsb v_{\ell}$ be a longest $V_1$-path in $G$.
    Note that adding $\{v_1, v_{\ell}\}$ to $G$ gives a cycle $C$ of length $\ell$ and $V(C) = V(P) \setminus \{ v_0 \}$.
    If $\ell = 2|V_2|$, then $\{v_1, v_{\ell}\}$ is a bipartite booster, and so we may suppose $\ell < 2 |V_2|$.
    Note that $|V(P) \cap V_2| < |V_2|$, and so there exists a vertex $u \in V_2 \setminus V(P)$.
    Since $G$ is connected (by~\ref{item:prebipartiteboosters-Gconnected}), there exists a shortest path $Q$ starting in $u$ and ending in some vertex of $V(P)$.
    This path cannot end in $v_0$, as that would yield a longer path than $P$ in $G$, so it must end in $V(C)$.
    Merging $Q$ with $C$ yields a path of length at least $\ell$ in $G+v_1v_\ell$ which is not a $V_1$-path, which by~\ref{item:prebipartiteboosters-V2path}, means that $G+v_1v_\ell$ has a $V_1$-path of length at least $\ell+1$.
    We conclude that $G+v_1v_\ell$ has a path longer than that of $G$, and thus $\{v_1, v_\ell\}$ is a bipartite booster.
\end{proof}

\begin{lemma}
\label{lemma:bipartite-boosters}
    Let $d, k, m, n$ satisfy $d \geq 2$, $k+1 \geq m$, and $n \geq 5m$.
    Suppose $G$ is a bipartite graph with parts $V_1, V_2$ such that $|V_1| \geq |V_2|+m = n+m$, which is a $(k,d)$-bipartite-expander and $m$-bipartite-joined.
    If $G$ does not contain a cycle of length $2|V_2|$, then it contains at least $n^2/8$ bipartite boosters.
\end{lemma}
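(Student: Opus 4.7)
My plan mirrors the proof of Lemma~\ref{lemma:tapaoenboosters}, performing two rounds of Pósa-style rotation at both ends of a longest $V_1$-path and invoking Lemma~\ref{lemma:prebipartiteboosters} to recognize the resulting bipartite boosters. By that lemma, $G$ is connected, every longest path in $G$ is a $V_1$-path, and for every longest $V_1$-path $P = v_0 v_1 \dotsb v_\ell$ the pair $\{v_1, v_\ell\}$ is a bipartite booster.

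Pick a longest $V_1$-path $P = v_0 v_1 \dotsb v_\ell$ and fix $v_\ell$. Rotating at the $v_0$-side, Lemma~\ref{lemma:rotation} yields $|N_G(S)| < 2|S|$ for $S = S(P, v_\ell) \subseteq V_1$. The $(k,d)$-bipartite-expander property with $d \geq 2$ rules out $|S| \leq k$; hence $|S| \geq k+1 \geq m$, and the $m$-bipartite-joined property then gives $|N(S)| \geq |V_2| - m + 1$. Combining these bounds, $|S| \geq (n-m+1)/2 \geq 2n/5$, using $n \geq 5m$. For each $u \in S$ pick a derived longest $V_1$-path $P_u = u w_u \dotsb v_\ell$; by Lemma~\ref{lemma:prebipartiteboosters}\ref{item:prebipartiteboosters-booster}, $\{w_u, v_\ell\}$ is a bipartite booster. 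Then fix $u$ and rotate $P_u$ at the $v_\ell$-side to obtain a set $S_u \subseteq V_1$ with $|S_u| \geq 2n/5$ by the same calculation; since such rotations preserve both the start $u$ and its second vertex $w_u$, every $v' \in S_u$ yields a bipartite booster $\{w_u, v'\}$.

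Write $W = \{w_u : u \in S\} \subseteq V_2$. For each $w \in W$ we obtain at least $2n/5$ distinct boosters of the form $\{w, \cdot\}$, and the sets of pairs associated with distinct $w \in W$ are automatically disjoint. Hence the number of bipartite boosters is at least $|W| \cdot (2n/5)$, so it would suffice to prove $|W| \geq 2n/5$, giving at least $(2n/5)^2 = 4n^2/25 > n^2/8$ boosters.

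The main obstacle is establishing $|W| \geq 2n/5$: a priori, many $u \in S$ could share the same second vertex $w_u$ (which must be a neighbour of $u$ on the derived path), and we have no individual degree bounds. My plan to overcome this is to apply a dual rotation argument to the sub-path $P_- = v_1 v_2 \dotsb v_\ell$, which is a longest $V_2$-to-$V_1$ path in $G$ (any longer such path would contradict the longest-path property of $P$ via Lemma~\ref{lemma:prebipartiteboosters}\ref{item:prebipartiteboosters-V2path}). A Pósa-style analysis on $P_-$---using that for any $s \in S(P_-, v_\ell) \subseteq V_2$, neighbours of $s$ inside $V(P_-)$ lie in $S^+ \cup S^-$ while neighbours outside $V(P_-)$ lie in $V_1 \setminus V(P_-)$---combined with the length bound $\ell \geq 2n - 2m + 2$ (which itself follows from applying Lemma~\ref{lemma:rotation} to $P$ and using that the neighbours of reachable endpoints must lie inside $V(P) \cap V_2$) produces many possible new starts in $V_2$. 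Each such start $w$ with a neighbour $x \in V_1 \setminus V(P_-)$ extends to a longest $V_1$-path $x w \dotsb v_\ell$ whose second vertex is $w$, placing $w$ in $W$; and the $m$-bipartite-joined property caps the number of exceptional $w$'s (those with no neighbour in $V_1 \setminus V(P_-)$) at $m - 1$. Pinning down this count tightly enough to reach $2n/5$ is where the core combinatorial effort goes.
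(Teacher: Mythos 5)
You are right to flag the bound on $|W|$ as the crux: it is a genuine gap, and the rest of your argument does not resolve it. Your counting scheme bins boosters by their $V_2$-side endpoint $w_u$, so the bound $|S_u| \geq 2n/5$ only guarantees many boosters per \emph{distinct} value of $w_u$, and nothing in your first two rounds of rotation prevents many $u \in S$ from sharing the same second vertex. The "dual rotation on $P_-$" plan is plausible but is not carried out, and the claim that the exceptional $w$'s are at most $m-1$ in number does not by itself force $|W| = \Omega(n)$; as written, there is no argument that delivers the lower bound you need.

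The paper sidesteps this collision problem entirely by binning on the other side. Fix a single longest $V_1$-path $P$ with start $u$, and let $T \subseteq V_2$ be the set of \emph{penultimate} vertices (the second-to-last vertex, read from $u$) over all paths derived from $P$ with $u$ fixed. Reading each derived path $u \dotsb w v'$ backwards, Lemma~\ref{lemma:prebipartiteboosters}\ref{item:prebipartiteboosters-booster} gives that $\{u, w\}$ is a booster for every $w \in T$, so $u$ is incident to $|T|$ boosters. Since every $v \in S$ is also an endpoint of a longest $V_1$-path, the same argument applied to $v$ gives $|T_v|$ boosters incident to $v$, and pairs incident to distinct $v \in S \subseteq V_1$ are automatically distinct (the $V_1$-vertex determines the pair). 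This removes any need to track how many distinct second vertices arise across different starting points. The key lemma you are missing is the lower bound on $|T|$: the paper shows that for each $x \in S$, at least one of $x^-, x^+$ lies in $T$, by considering the last rotation in the derivation of the path ending at $x$ in which $x$ still has both of its original $P$-neighbours. That claim yields $|T| \geq |S|/2$ and hence the quadratic booster count, and it plays the role your unproven $|W| \geq 2n/5$ was supposed to play. (As a side remark, your computation $|N(S)| \geq |V_2| - m + 1$, hence $|S| > (n-m+1)/2 \geq 2n/5$, is the correct one; the paper's intermediate line $|N(S)| \geq |V_1| - m + 1 > n$ cannot be right since $N(S) \subseteq V_2$, so the constant in the paper's final count is slightly off, but this is immaterial for the application, which only needs $\Omega(n^2)$ boosters.)
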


\begin{proof}
    Let $P$ be a longest path in $G$, starting at the vertex $u$.
    By \cref{lemma:prebipartiteboosters}\ref{item:prebipartiteboosters-V2path}, $P$ is a $V_1$-path.
    Recall that $S = S(P, u) \subseteq V_1$ is the set of ending vertices of paths derived from $P$.
    Let $T \subseteq V_2$ be the set of penultimate vertices of all paths derived from $P$.
    That is, if $u \dotsb wv$ is any path derived from $P$, then $w \in T$ and $v \in S$.
    Note that $T, S \subseteq V(P)$.
    We note that it is enough to show that $|S| \geq n/2$ and $|T| \geq n/4$.
    Indeed, by \cref{lemma:prebipartiteboosters}\ref{item:prebipartiteboosters-booster}, for each $w \in T$ we have that $\{u,w\}$ is a bipartite booster, so $|T| \geq n/4$ implies that $u$ is contained in at least $n/4$ bipartite boosters.
    But $|S| \geq n/2$ means that at least $n/2$ vertices are also endpoints of a longest path in $G$, so this means that those vertices are also contained in at least $n/4$ bipartite boosters each.
    Thus the number of bipartite boosters is at least $|S|n/4 \geq n^2/8$, as desired.
    
    \cref{lemma:rotation} implies that $|N(S)| < 2|S|$. As $G$ is $(k,2)$-bipartite-expander, then $|S| \geq k+1 \geq m$. Also, since $G$ is $m$-joined and $|V_1| \geq n+m$, then $|N(S)| \geq |V_1| - m + 1 > n$.
    Therefore $|S| > |N(S)|/2 > n/2$, as required.
    
    So it only remains to show that $|T| \geq n/4$. Given $x \in V(P)$, recall that $x^-, x^+$ are the predecessor and successor of $x$ with respect to the vertex ordering in
    $P$.
    We will show that
 if $x \in S$, then $x^{-} \in T$ or $x^+ \in T$.
    This easily implies that $|T| \geq |S|/2 \geq n/4$, as desired.
    
    To show 
    this,
    suppose, for a contradiction, that $x \in S$ but $x^-, x^+ \notin T$.
    Since $x \in S$, there is a sequence $P_0, P_1, \dotsc, P_t$ of paths derived from $P$, where $P_0 = P$, such that $P_t$ has endpoints $u$ and $x$, and for $1 \leq i \leq t$, the path $P_{i}$ is obtained from $P_{i-1}$ by an elementary rotation.
    Let $0\le j\le t$ be the maximum integer such that $x$ is adjacent to both $x^+$ and $x^-$ in $P_j$.
    Our assumptions imply that $0 \leq j < t$.
    Suppose $P_j = u \dotsb x_1 x x_2 \dotsb w$, with $\{x_1, x_2\} = \{x^-, x^+\}$.
    By the choice of $j$, $x$ is adjacent to both $x^+$ and $x^-$ in $P_j$, but not in $P_{j+1}$.
    Since $P_j$ is a longest path, it is a $V_1$-path, so the only possibility is that the edge $wx_1$ was used to rotate $P_j$ and obtain $P_{j+1}$.
    But this means that $x$ is the endpoint of $P_{j+1}$ and $x_2$ is the penultimate vertex of $P_{j+1}$.
    Thus $x_2 \in T$, a contradiction.
\end{proof}

\subsection{Boosters in subgraphs of random graphs}
In this section, we modify an argument by Lee and Sudakov~\cite{LS2012} which shows that, with high probability, every sufficiently sparse expander subgraph $H \subseteq G(n,p)$ has a booster in $G(n,p)$.
We need to modify their argument slightly to allow for bipartite boosters and to give a lower bound on the number of such objects appearing on $G(n,p)$, but otherwise the argument is the same.

Let $\mathcal{F}$ be a family of graphs on the same $n$-vertex set $V$ together with a family of subgraphs $\mathcal{B} = \{ B_F \subseteq \binom{V}{2} : F \in \mathcal{F} \}$, one for each $F \in \mathcal{F}$.
We say $(\mathcal{F}, \mathcal{B})$ is \emph{$(\delta , \alpha)$-boosterable} if
\begin{enumerate}
    \item for each $F \in \mathcal{F}$, $|E(F)| \leq \delta n^2$, and
    \item for each $F \in \mathcal{F}$,  $B_F \subseteq \binom{V}{2} \setminus E(F)$ and $|B_F| \geq \alpha n^2$.
\end{enumerate}
For instance, $\mathcal{F}$ can be taken as the set of all expander, joined, non-Hamiltonian subgraphs on a vertex set $V$, and for each $F \in \mathcal{F}$ we can take $B_F$ as the set of boosters of $F$.

\begin{lemma} \label{lemma:conditionalbooster}
    Let $\alpha,\delta>0$ satisfy $1024 \delta \leq \alpha^2$ and $\delta \leq 1/e$. Let $G = G(n,p)$ for $p \geq 1/n$, and let $(\mathcal{F}, \mathcal{B})$ be a $(\delta p, \alpha)$-boosterable family on $V(G)$. Then, w.h.p. for any $F \in \mathcal{F}$ with $F \subseteq G$, we have $|B_F \cap E(G)| > \alpha p n^2 / 2$.
\end{lemma}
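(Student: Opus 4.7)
The plan is a standard union bound over candidate subgraphs $F$, in the spirit of the Lee--Sudakov trick alluded to in the introduction to this section. The key observation is that by the definition of boosterability, $B_F\cap E(F)=\emptyset$, so the two events $\{F\subseteq G\}$ and $\{|B_F\cap E(G)|\le \alpha pn^2/2\}$ are determined by disjoint sets of potential edges of $G(n,p)$, and hence are independent.

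First I would fix $F\in\mathcal{F}$ and apply Chernoff's inequality. Since $|B_F\cap E(G)|\sim \mathrm{Bin}(|B_F|,p)$ has mean at least $\alpha pn^2$, the probability that it falls below half of its mean is at most $\exp(-\alpha pn^2/8)$. Together with $\Pr[F\subseteq G]=p^{|E(F)|}$ and the independence above, this gives
\[
\Pr\bigl[F\subseteq G \text{ and } |B_F\cap E(G)|\le \alpha pn^2/2\bigr]\le p^{|E(F)|}\exp(-\alpha pn^2/8).
\]
Next I would union bound over $F\in\mathcal{F}$. Since $|E(F)|\le \delta pn^2$, the total failure probability is at most
\[
\sum_{m=0}^{\lfloor \delta pn^2\rfloor}\binom{\binom{n}{2}}{m} p^m \cdot \exp(-\alpha pn^2/8).
\]
Using the elementary estimate $\binom{\binom{n}{2}}{m}p^m\le (epn^2/(2m))^m$ and noting that $m\mapsto (epn^2/(2m))^m$ is increasing on $[0,pn^2/2]$ (which contains the summation range, since $1024\delta\le \alpha^2\le 1$ forces $\delta<1/2$), the supremum over the sum is attained at $m\approx\delta pn^2$, yielding a bound of $2\delta pn^2\cdot (e/(2\delta))^{\delta pn^2}\cdot \exp(-\alpha pn^2/8)$.

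The main obstacle, and the reason for the specific constants in the hypothesis, is to show that the combinatorial factor $(e/(2\delta))^{\delta pn^2}$ is absorbed by $\exp(\alpha pn^2/8)$. This reduces to the inequality $\delta\log(e/(2\delta))\le \alpha/16$. A clean way to verify it: for $\delta\le 1/e$ one has $\log(e/(2\delta))\le 2\log(1/\delta)$, and a one-variable calculus check gives $\sqrt{\delta}\log(1/\delta)\le 2/e\le 1$ on this range, hence $\delta\log(1/\delta)\le \sqrt{\delta}$; now the hypothesis $1024\delta\le \alpha^2$ gives $\sqrt{\delta}\le \alpha/32$, and combining these estimates yields $\delta\log(e/(2\delta))\le 2\sqrt{\delta}\le \alpha/16$. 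Plugging back in leaves
\[
\Pr[\text{bad}]\le 2\delta pn^2\cdot \exp(-\alpha pn^2/16),
\]
and since $p\ge 1/n$ forces $pn^2\ge n\to\infty$, the right-hand side tends to $0$, giving the claimed w.h.p. statement.
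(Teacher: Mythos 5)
Your proof is correct and follows essentially the same route as the paper: independence of $\{F\subseteq G\}$ and $\{|B_F\cap E(G)|\le \alpha pn^2/2\}$ via $B_F\cap E(F)=\emptyset$, a Chernoff bound, a union bound over graphs with at most $\delta pn^2$ edges using the monotonicity of $t\mapsto (x/t)^t$ on $[0,x/e]$, and then the elementary estimate $\delta\log(1/\delta)\le\sqrt{\delta}$ together with $1024\delta\le\alpha^2$ to absorb the combinatorial factor into the Chernoff gain. The only cosmetic differences are using $\binom{\binom{n}{2}}{m}$ rather than the paper's $\binom{n^2}{t}$ (giving the factor $e/(2\delta)$ instead of $e/\delta$) and stating the final polynomial prefactor as $2\delta pn^2$ instead of $n^2$; neither affects the conclusion.
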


\begin{proof}
    For any $F \in \mathcal{F}$,
    since $B_F \cap E(F) = \emptyset$,
    we have
    \[ \probability( |B_F \cap E(G)| \leq \alpha p n^2 / 2 | F \subseteq G ) = \probability( |B_F \cap E(G)| \leq \alpha p n^2 / 2) \leq e^{- \alpha p n^2 / 8}, \]
    where the last inequality follows from a Chernoff bound, using that $|B_F \cap E(G)|$ is a sum of independent $\{0,1\}$ random variables and $\expectation[|B_F \cap E(G)|] \geq \alpha n^2p$.
    
    Let $q$ be the probability that for some $F \in \mathcal{F}$, $F \subseteq G$ but $|B_F \cap E(G)| \leq \alpha p n^2 / 2$. Then, by a union bound, we have
    \begin{align*}
        q
        & \leq \sum_{F \in \mathcal{F}} \probability( F \subseteq G \land |B_F \cap E(G)| \leq \alpha p n^2 / 2 ) \\
        & = \sum_{F \in \mathcal{F}} \probability( |B_F \cap E(G)| \leq \alpha p n^2 / 2 | F \subseteq G ) \probability(F \subseteq G) \\
        & \leq e^{-\alpha p n^2 / 8} \sum_{F \in \mathcal{F}} \probability(F \subseteq G)
        \leq e^{-\alpha p n^2 / 8} \sum_{F \subseteq K_n, |E(F)| \leq \delta p n^2} \probability(F \subseteq G) \\
        & \leq e^{-\alpha p n^2 / 8 } \sum_{t = 0 }^{\delta p n^2} \binom{n^2}{t} p^t
        \leq  e^{-\alpha p n^2 / 8 } \sum_{t = 0 }^{\delta p n^2} \left( \frac{e n^2 p}{t} \right)^t.
    \end{align*}
    Note that for a fixed $x > 0$, the function $t \mapsto (x/t)^t$ is increasing between $0$ and $x/e$.
    Since $\delta \leq 1/e$,
    we have that each term of the last sum is at most $(e/\delta)^{\delta p n^2}$.
    Therefore we obtain
    \[ q \leq e^{-\alpha p n^2 / 8 } \sum_{t = 0 }^{\delta p n^2} \left( \frac{e n^2 p}{t} \right)^t \leq n^2 e^{-\alpha p n^2 / 8} \left(\frac{e}{\delta}\right)^{\delta p n^2} = n^2 e^{ pn^2( \delta(1 - \ln \delta) - \alpha/8)} \leq n^2 e^{-\alpha p n^2 / 16}, \]
    where in the last step we used $-x \ln x \leq \sqrt{x}$ (valid for all $x > 0$) and $1024 \delta \leq \alpha^2$ to deduce $\delta(1 - \ln \delta) \leq \delta + \sqrt{\delta} \leq 2 \sqrt{\delta} \leq \alpha/16$.
    Then $p \geq 1/n$ shows that the last term is $o(1)$, as required.
\end{proof}

\section{Finding cycles} \label{section:cycles}

After applying the stability results to understand the global structure of the colouring of the random graph $G$, our task will be to find cycles in some monochromatic subgraph related to the extremal colouring.
That subgraph will inherit \emph{expansion} properties of $G$, which we will use to find cycles of prescribed lengths.
The goal of this section is to prove the following lemma.

\begin{lemma}[Cycle finder lemma] \label{lemma:cyclesprescribedlength}
    For each $\lambda > 0$,
    there exist $C, \varepsilon > 0$ such that the following holds with high probability.
    Let $p \geq C/n$ and $G = G(n,p)$.
    For all subgraphs (not necessarily induced) $H\subseteq G$ such that
    \begin{enumerate}
        \item \label{item:cyclesprescribedlength-largevertexset} $|V(H)| > \lambda n$,
        \item
        \label{item:cyclesprescribedlength-mindegree}$\delta(H) \geq \lambda np$, and
        \item \label{item:cyclesprescribedlength-manyedges} $|E(G[V(H)])| - |E(H)| \leq \varepsilon p n^2$;
    \end{enumerate}
    it holds that $C_{\ell} \subseteq H$ for all $\ell \in \{ 3 \log_2 n, \dotsc,  |V(H)|\}$.
\end{lemma}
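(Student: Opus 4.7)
Fix $B$ large and $C, \varepsilon > 0$ appropriately small depending on $\lambda$. Condition on the high-probability events for $G = G(n,p)$ with $p \geq C/n$: $G$ is $(\eta, p)$-uniform for some $\eta \ll \varepsilon$ (by \cref{lemma:upper-uniform}), $\Delta(G) \leq 2np$ (by Chernoff), $G$ is $(C/p, c_0 n)$-joined for some small $c_0 > 0$ (by \cref{lemma:Gnpisjoined}), $e_G(S) \leq \max(2|S|, 2p|S|^2)$ for every $S \subseteq V(G)$ (union bound), and the conclusion of \cref{lemma:conditionalbooster} for the family of sparse expander-plus-joined subgraphs of $K_n$. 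Given $H \subseteq G$ satisfying (i)--(iii), set $n' := |V(H)|$.

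The first step is to transfer strong expansion and joined properties from $G$ to $H$. For small $|S| \leq \lambda n / 4$, combining $\delta(H) \geq \lambda np$ with the bound on $e_G(S)$ yields $e_H(S, V(H) \setminus S) \geq (\lambda np/2)|S|$; since each vertex outside $S$ has at most $\min(|S|, 2np)$ neighbours in $S$, this gives $|N_H(S)| \geq \lambda np/2 \geq 6|S|$ whenever $|S| \leq \lambda np/12$, so $H$ is a $(\lambda np/12, 6)$-expander. For the joined property, $(\eta, p)$-uniformity of $G$ and the hypothesis $|E(G[V(H)])| - |E(H)| \leq \varepsilon p n^2$ together force any disjoint $A, B \subseteq V(H)$ with $|A||B|$ sufficiently large (relative to $\varepsilon n^2$) to have an $H$-edge; this makes $H$ into $(C/p, c_1 n)$-joined for some $c_1 > 0$. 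Applying \cref{lemma:improvedexpansion} then upgrades the expansion of $H$ into $(\Omega_\lambda(n), 2)$-expansion.

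To find $C_\ell \subseteq H$ for each $\ell \in [B\log n, n']$, I would split into two regimes. For $\ell$ linear in $n$: carve out a subset $U \subseteq V(H)$ of size $\ell$ preserving expansion (via \cref{lemma:friedmanpippenger-expansion}) and construct a Hamilton cycle of $H[U]$ by the booster method, starting from a sparse expander-plus-joined skeleton $F_0 \subseteq H[U]$ and iteratively adjoining boosters from $E(H[U])$ using \cref{lemma:conditionalbooster} and \cref{lemma:tapaoenboosters}. The key here is that the missing-edge budget $\varepsilon p n^2$ is much smaller than the $\Omega(\alpha p n^2)$ boosters guaranteed by \cref{lemma:conditionalbooster}, so at each step a booster lies in $E(H[U])$; the resulting Hamilton cycle of $H[U]$ is a cycle of length exactly $\ell$ in $H$. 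For short $\ell \in [B\log n, o(n)]$: embed a double-broom (a path capped with two small binary trees, in the spirit of \cref{claim:brooms-in-the-matching}) of total length $\ell$ into $H$ via the tree-embedding machinery of \cref{lemma:embedding:expander}, and close it into a cycle of length exactly $\ell$ by producing an edge between the two leaf sets, guaranteed by the joined property of $H$ at the appropriate scale.

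The main obstacle will be the short-cycle regime: the joined property of $H$ is inherently asymmetric (one side of size $\Theta(1/p)$, the other $\Theta(n)$), so closing a double-broom into a cycle of length near $B\log n$ requires carefully balancing the depths of the terminal binary trees (whose leaf sets serve as the "closing" pair) against the length of the connecting path, all while respecting the small-set expansion threshold. The tools at hand — strong small-set expansion $|N_H(S)| \geq \lambda np / 2$ for $|S| \leq \lambda np/12$, global $(\Omega_\lambda(n), 2)$-expansion, and the joined properties of $H$ — together are exactly what allows the precise length $\ell$ to be hit by tuning the broom's structure.
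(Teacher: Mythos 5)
Your overall plan matches the paper's: establish that $H$ is an expander and joined, then split into a short regime (tree embedding, closed into a cycle via joinedness) and a long regime (trim $|V(H)|-\ell$ vertices via Friedman--Pippenger, then run the Pósa-rotation/booster machinery on a sparsified skeleton). The treatment of the long regime and the use of \cref{lemma:sparsificator}-type thinning before adding boosters is exactly right, and so is the short-cycle construction.

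However, there is a genuine gap in how you establish expansion of $H$. Your argument derives $(\lambda np/12, 6)$-expansion deterministically from the minimum degree $\delta(H)\geq\lambda np$, the bound on $e_G(S)$, and the maximum degree. This is fine, but when $p$ is close to $C/n$ the range $\lambda np/12$ is only $\Theta(1)$, so you have expansion only for constant-size sets. Your attempt to upgrade this via \cref{lemma:improvedexpansion} requires $k+1\geq m$ with $k=\lambda np/12$ and $m=\Theta(1/p)$, i.e.\ essentially $np^2\geq\text{const}$, which fails badly for all $p$ between $C/n$ and roughly $n^{-1/2}$. And for the sets of size between $\Theta(np)$ and $\Theta(n)$, your edge-counting bound $e_H(S, V(H)\setminus S) \geq \lambda np|S|/2$ combined with the trivial cap of $2np$ on degree into $S$ gives only $|N_H(S)|\geq(\lambda/4)|S|$, a sub-constant expansion factor for small $\lambda$. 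Neither $(\eta,p)$-uniformity nor joinedness repairs this: uniformity only controls sets of size $\geq\eta n$, and joinedness bites only above scale $\Theta(1/p)$, leaving a large window of set sizes (e.g.\ $\sqrt n$ when $p = \Theta(1/n)$) uncovered.

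The paper closes this window with \cref{lemma:expansionfromminimumdegree}, which you do not invoke: with high probability over $G(n,p)$, \emph{every} subgraph with $\delta\geq\eta np$ is a $(cn,d)$-expander, for constants $c, d$ depending only on $\eta, d$. This is not a deterministic consequence of degrees and uniformity---it is proved by a union bound over all candidate non-expanding sets $X$ and their neighbourhoods $Y$, exploiting the randomness of $G$ to rule out the unusually dense patches that a failure of expansion would force. This single probabilistic statement is what makes the Friedman--Pippenger trimming at linear scale (and the booster lemma, which needs $(m,M)$-joined with $m\leq M$ and $k+1\geq m$) go through for the full range $p\geq C/n$. Without it your argument only works in the denser regime $p\gtrsim n^{-1/2}$. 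Incidentally, your worry about ``balancing the broom depths'' in the short regime is misplaced: fixing the binary-tree depths at $\log_2(2cn)$ and closing via $2cn$-joinedness is straightforward once the linear-scale expansion is in hand.
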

We also have the following `bipartite' version of \cref{lemma:cyclesprescribedlength}.

\begin{lemma}[Bipartite cycle finder lemma] \label{lemma:cyclesprescribedlength-bipartite}
    For all $\lambda > 0$ there exist $ C, \varepsilon > 0$ such that the following holds with high probability.
    Let $p \geq C/n$ and $G = G(n,p)$.
    For all bipartite subgraphs (not necessarily induced) $H\subseteq G$ with vertex classes $V_1, V_2$ such that
    \begin{enumerate}
        \item \label{item:cyclesprescribedlength-bip-largevertexset} $|V_1| \geq 3|V_2|/2$, $|V_2| > \lambda n$, 
        \item
        \label{item:cyclesprescribedlength-bip-mindegree}$\delta(H) \geq \lambda np$, and
        \item \label{item:cyclesprescribedlength-bip-manyedges} $|G[V_1,V_2]| - |H[V_1,V_2]| \leq \varepsilon p n^2$;
    \end{enumerate}
    it holds that $C_{\ell} \subseteq H$ for all even $\ell \in \{ 3 \log_2 n, \dotsc,  2|V_2|\}$.
\end{lemma}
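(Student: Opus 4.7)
I would prove \cref{lemma:cyclesprescribedlength-bipartite} by extending the bipartite rotation-extension machinery of \cref{section:rotation} to arbitrary subgraphs $H\subseteq G=G(n,p)$, in direct analogy with the non-bipartite Cycle finder lemma.

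\textbf{Step 1: $H$ is a bipartite expander and bipartite-joined.} The first task is to show that $H$ inherits from $G$ the following structural properties: for appropriate constants $d=d(\lambda)\geq 4$, $k=\Theta(n)$, and $m=\Theta(\log(np)/p)$, $H$ is a $(k,d)$-bipartite-expander and $m$-bipartite-joined. Expansion at small scales (sets of size up to $O(1/p)$) follows from the minimum-degree hypothesis $\delta(H)\geq \lambda np$ combined with the classical fact that $G(n,p)$ has no small dense subsets. At medium and large scales, one uses the upper-uniformity of $G$ (\cref{lemma:upper-uniform}), the joining of $G$ (\cref{lemma:Gnpisjoined}), and the edge-loss bound $|E(G[V(H)])|-|E(H)|\leq \varepsilon p n^2$ to transfer the corresponding properties from $G$ to $H$.

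\textbf{Step 2: A cycle of length $2|V_2|$.} Suppose $H$ contains no $C_{2|V_2|}$. By the bipartite booster count (the unlabelled lemma immediately following \cref{lemma:prebipartiteboosters}) applied to $H$, possibly after a mild reduction ensuring $|V_1|\geq |V_2|+m$, $H$ admits at least $n^2/8$ bipartite boosters, all of which are non-edges of $H$. To translate this into a contradiction, I would combine it with \cref{lemma:conditionalbooster}, applied to a suitable family $\mathcal{F}$ of \emph{sparse} witness subgraphs of $G$ (for instance, minimal $(k,d)$-bipartite-expander subgraphs, which have $O(n)$ edges) whose booster sets contain all bipartite boosters of $H$. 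This yields $\Omega(p n^2)$ bipartite boosters of $H$ inside $E(G)$; since all of them are non-edges of $H$, they lie in $E(G)\setminus E(H)$, contradicting the edge-loss hypothesis as long as $\varepsilon$ is small enough.

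\textbf{Step 3: All even lengths, and main obstacle.} For an intermediate even length $\ell$ with $B\log n\leq \ell<2|V_2|$, I would carve out a balanced bipartite subgraph $H_\ell\subseteq H$ with $|V_i\cap V(H_\ell)|=\ell/2$ for $i\in\{1,2\}$, still satisfying the hypotheses with slightly worsened constants, and then apply Step~2 to $H_\ell$ to obtain $C_\ell\subseteq H_\ell\subseteq H$. A greedy vertex-deletion procedure, removing a vertex of smallest current degree alternately from each side, should preserve $\delta(H_\ell)\geq \lambda'np$ and a suitable edge-loss bound, provided $\ell/2\gg \log n/p$. The lower bound $\ell\geq B\log n$ then comes from the requirement that the bipartite-joined property still hold in $H_\ell$ at scale $\Theta(\log(np)/p)$, which is the finest scale at which $G(n,p)$ itself is joined. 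The main technical obstacle is the bookkeeping in Step~2: one must set up the witness family so that the sparsity hypothesis of \cref{lemma:conditionalbooster} is satisfied while its booster sets cover the bipartite boosters of $H$, which is the essence of why one reduces $H$ to a minimal expander subgraph before running the Lee--Sudakov-style argument.
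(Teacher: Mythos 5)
Your high-level outline (bipartite expansion from minimum degree, bipartite boosters, and the conditional-booster lemma in $G(n,p)$) is the right family of tools, but Steps~2 and~3 both contain genuine gaps, and the overall architecture you propose differs from the paper's in a way that does not quite close.

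In Step~2 you want a \emph{direct} contradiction: $H$ has $\geq n^2/8$ bipartite boosters, all non-edges of $H$; many of them should lie in $E(G)$ by \cref{lemma:conditionalbooster}; hence $E(G)\setminus E(H)$ is large. The obstruction is that \cref{lemma:conditionalbooster} requires the family $(\mathcal{F},\mathcal{B})$ to be fixed \emph{before} exposing $G$, with each $B_F$ a deterministic function of $F$. You propose taking for $\mathcal{F}$ a family of sparse witness subgraphs whose booster sets ``contain all bipartite boosters of $H$.'' But $H$ is a random object depending on $G$, so ``boosters of $H$'' cannot serve as $B_F$; and if instead you take $B_F$ to be the bipartite boosters of $F$ itself, the containment $B_F\supseteq B_H$ fails (a booster of $H$ extends $H$'s longest path, which is generally longer than $F$'s, so it need not boost $F$; the opposite inclusion fails for the symmetric reason). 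You also assert that ``minimal $(k,d)$-bipartite-expander subgraphs have $O(n)$ edges,'' which is not a standard fact and is not what the paper uses. The paper sidesteps all of this by a different architecture: it first runs the \emph{Sparsificator} (\cref{lemma:sparsificator}) on $H$ to produce a spanning $H'\subseteq H$ with $O(\delta pn^2)$ edges while keeping minimum degree and joinedness, then \emph{iteratively} adds bipartite boosters, each time taking a booster of the \emph{current} sparse subgraph $H_{i-1}$ that happens to lie in $E(H)$ (possible because $H_{i-1}$ has $\Omega(pn^2)$ boosters in $G$ and only $\varepsilon p n^2$ edges of $G[V(H)]$ are missing from $H$). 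This iteration never needs to relate boosters of $H$ to boosters of a sparse subgraph; it only ever reasons about the subgraph it has built so far.

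In Step~3 your proposal to greedily delete vertices alternately from $V_1$ and $V_2$ to obtain a \emph{balanced} $H_\ell$ does not preserve minimum degree: when $|V_2|$ is much larger than $\lambda np$, removing $|V_2|-\ell/2$ vertices from $V_2$ can wipe out all of a $V_1$-vertex's neighbourhood, and nothing in a greedy minimum-degree rule prevents this. The paper never balances the two sides. It first sparsifies, then uses the bipartite Friedman--Pippenger lemma (\cref{lemma:friedmanpippenger-expansion-bipartite}) to delete exactly $|V_2|-\ell$ vertices \emph{from $V_2$ alone}, preserving bipartite expansion by construction; the resulting $H^\ast$ has $|V_2'|=\ell$ and the booster argument then produces a cycle of length $2\ell$. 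For the short range $B\log n\leq 2\ell<(1-b/10)2|V_2|$, the paper (implicitly, by analogy with the non-bipartite proof) obtains the cycle directly from a tree/double-broom embedding in the expander, which you do not address at all.

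So: the ingredients you name are the correct ones, but you are missing the Sparsificator as the device that makes \cref{lemma:conditionalbooster} applicable, the iterative booster-adding scheme that replaces your one-shot contradiction, the bipartite Friedman--Pippenger pruning of $V_2$ that replaces your balanced carving, and the tree-embedding route for short lengths.
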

After gathering some tools on expansion, we will give the proof of \cref{lemma:cyclesprescribedlength} at the end of this section.

\subsection{Expansion from minimum degree}
Now we show that each subgraph of a random graph with sufficiently large minimum degree has nice expansion properties.
Similar arguments in a similar setting were given, e.g. by Krivelevich, Lubetzky and Sudakov~\cite{KLS2014}.

\begin{lemma} \label{lemma:expansionfromminimumdegree}
    For each $\eta, d > 0$, there exist $C, c > 0$ such that the following holds.
    Let $p \geq C/n$ and $G = G(n,p)$. Then w.h.p.
    \begin{enumerate}
        \item for each $G' \subseteq G$ with $\delta(G') \geq \eta pn$, $G'$ is a $(cn,d)$-expander, and
        \item for each bipartite $G' \subseteq G$ with $\delta(G') \geq \eta pn$, $G'$ is a $(cn,d)$-bipartite-expander.
    \end{enumerate}
\end{lemma}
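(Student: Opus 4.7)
My plan is to reduce the expansion of an arbitrary subgraph to a counting property of $G$ itself. Suppose $G' \subseteq G$ has $\delta(G') \geq \eta pn$ and fails to be a $(cn,d)$-expander, as witnessed by some $S \subseteq V(G')$ with $|S| = s \leq cn$ and $|N_{G'}(S)| < ds$. Setting $T = S \cup N_{G'}(S)$ we have $|T| < (d+1)s$, and since every $v \in S$ has at least $\eta pn$ $G'$-neighbours, all of them lying in $T$, the same bound holds in $G$; summing over $v \in S$ yields $2 e_G(S) + e_G(S, T \setminus S) \geq \eta pn s$, so at least one of $e_G(S) \geq \eta pns/4$ or $e_G(S, T \setminus S) \geq \eta pns/2$ must occur. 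The task reduces to showing that w.h.p. $G(n,p)$ contains no such dense configuration.

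I would then distinguish two regimes in $s$. If $s \leq \eta pn/(2(d+1))$, then $|T| < \eta pn/2$, so $T$ is simply too small to host $\eta pn$ neighbours of any $v \in S$; choosing $C$ large enough that $\eta pn \geq 2(d+1)$ disposes of this regime directly via the min-degree hypothesis. For the moderate range $s \in (\eta pn/(2(d+1)), cn]$, I would apply Chernoff's upper tail together with a union bound. For the event $e_G(S) \geq \eta pns/4$, Chernoff gives $\Pr \leq (2es/(\eta n))^{\eta pns/4}$; combined with $\binom{n}{s} \leq (en/s)^s$, the logarithm of the union-bound contribution at a given $s$ is $s \cdot [1 + (\eta pn/4)\log(2e/\eta) - (\eta pn/4 - 1)\log(n/s)]$. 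Viewed as a function of $s$ this expression is convex, so for $c$ small enough that $cn$ lies to the left of the interior minimiser it is decreasing on the problematic range, and its maximum there is attained at the left boundary $s^\ast_L = \eta pn/(2(d+1))$. Since $\log(n/s^\ast_L) \geq \log n - O(1)$, that maximum is at most $-s^\ast_L (\eta pn/4 - 1)\log n + O(s^\ast_L)$, yielding a per-configuration bound of $n^{-k}$ with $k = s^\ast_L(\eta pn/4 - 1)$. Provided $C$ is chosen large enough (depending on $\eta$ and $d$) that $k > 1$, summing over $s$ in the problematic range gives a total of $O(n^{1-k}) = o(1)$. The event $e_G(S, T \setminus S) \geq \eta pns/2$ is handled identically, union-bounding over disjoint pairs $(S, T \setminus S)$ of sizes $s$ and at most $ds$.

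Part (ii) runs along the same lines: a failure of bipartite expansion at some $S \subseteq V_i$ produces a pair $(S, T)$ with $T \subseteq V_{3-i}$, $|T| < ds$ and $e_G(S, T) \geq \eta pns$, and the analogous Chernoff-plus-union-bound calculation gives the required bound (and is in fact slightly simpler, as there are no edges-within-$S$ to account for). The main obstacle is the bookkeeping for the moderate-$s$ regime: because $\eta pn$ is merely a constant when $p = \Theta(1/n)$, the problematic range of $s$ may span from a constant up to $\Theta(n)$, and one must verify that the Chernoff/union-bound exponent is uniformly negative across it. This is achieved by taking $C$ large enough that the quantity $s^\ast_L(\eta pn/4 - 1) = \Theta(\eta^2 C^2/(d+1))$ exceeds $1$ (forcing $C \gtrsim \sqrt{d+1}/\eta$ up to absolute constants), together with $c$ small enough (depending on $\eta$ and $d$) that the right endpoint $s = cn$ stays to the left of the interior minimiser of the convex exponent.
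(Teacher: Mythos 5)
Your overall strategy is essentially the same first-moment union bound that the paper uses: a failure of expansion at a set $S$ of size $s$ forces many $G$-edges to live inside the small set $T=S\cup N_{G'}(S)$, and you bound the probability of that density by Chernoff plus a union bound over $(S,T)$. Your handling of the very small-$s$ regime (where $|T|<\eta pn$ already contradicts the minimum degree, no probability needed) is slightly cleaner than the paper's, which applies the union bound there too (the paper splits instead at $x\approx\sqrt{c/p}$, for a different reason). Both proofs then hinge on showing the union-bound exponent is uniformly $\lesssim-\log n$ across the remaining range of $s$.

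However, the key inequality you invoke — $\log(n/s^\ast_L)\geq\log n-O(1)$ — is false except when $p=\Theta(1/n)$. Since $s^\ast_L=\eta pn/(2(d+1))$, that bound requires $s^\ast_L=O(1)$, i.e.\ $pn=O(1)$; for constant $p$ one has $\log(n/s^\ast_L)=O(1)$ instead. Consequently the derived bound $f(s^\ast_L)\leq -s^\ast_L(\eta pn/4-1)\log n+O(s^\ast_L)$ does not follow; for constant $p$ the true value of $f(s^\ast_L)$ is $-\Theta(n^2)$, which is much \emph{less} negative than your claimed $-\Theta(n^2\log n)$, so the asserted inequality fails. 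A secondary issue of the same kind: the terms you absorb into ``$O(s^\ast_L)$'' include $s^\ast_L\cdot\tfrac{\eta pn}{4}\log(2e/\eta)$, which is $\Theta(s^\ast_L\cdot pn)$, not $O(s^\ast_L)$, once $p\gg 1/n$. None of this sinks the proof — for $p$ away from $\Theta(1/n)$ the exponent is in fact much more negative than you need, and your estimate is tight precisely in the hardest regime $p=\Theta(1/n)$ — but as written the intermediate inequalities are incorrect outside that regime and need to be replaced by a calculation (or a simple separate remark) covering $pn\to\infty$. Once that bookkeeping is repaired, the argument matches the paper's in substance.
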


\begin{proof}
    We will only prove the statement for non-bipartite graphs, since the statement for bipartite graphs has essentially the same proof.
    Without loss of generality we assume $d \geq 4$, since that gives a stronger statement.
    Set
    \begin{equation*}
        c = \eta^2/(16 e^2 (2d+1)^2)\quad \text{ and }\quad
        C = \max \left\{ 4 (d+1) \eta^{-1}, 8 \eta^{-1} \right\}.
    \end{equation*}
    
    Let $k = c n$.
    Suppose $G' \subseteq G$ is a graph with $\delta(G') \geq \eta p n$ which is not a $(k, d)$-expander.
    Let $X \subseteq V(G')$ such that $|X| \leq k$ and $|N_{G'}(X) \setminus X| \leq d |X|$, and let $Y = N_{G'}(X) \setminus X$.
    As $\delta(G') \geq \eta p n$, we conclude that from the $\binom{|X|}{2} + |X||Y|$ potential edges incident to $X$ in $G'$, at least $\eta p n |X|/2$ of them are actually present in $G'$, and thus also in $G$.
    The probability of that event can be bounded from above by using a union bound over all possible choices of $X$ and $Y$,
    and we obtain (with explanations to follow)
    \begin{align*}
        \sum_{1 \leq x \leq k} \binom{n}{x} \binom{n}{dx} \binom{x^2/2 + dx^2}{\eta p n x/2} p^{\eta p n x/2}
        & \leq \sum_{1 \leq x \leq k} \left(\frac{en}{x}\right)^x \left(\frac{en}{dx}\right)^{dx} \left(\frac{e(x^2+2dx^2)}{\eta p n x}\right)^{\eta p n x/2} p^{\eta p n x/2} \\
        & = \sum_{1 \leq x \leq k} \left[ \frac{e^{d+1}}{d^d} \left( \frac{n}{x} \right)^{d+1} \left( \frac{x}{n} \right)^{\eta p n /2}  \left( \frac{e(2d+1)}{\eta} \right)^{\eta p n /2} \right]^x \\
        & \leq \sum_{1 \leq x \leq k} \left[ \frac{e^{d+1}}{d^d} \left( \frac{x}{n} \right)^{\eta p n / 4}  \left( \frac{e(2d+1)}{\eta} \right)^{\eta p n /2} \right]^x \\
         & \leq \sum_{1 \leq x \leq k} \left[ \left( \frac{x}{n} \right)^{\eta p n / 4}  \left( \frac{e(2d+1)}{\eta} \right)^{\eta p n /2} \right]^x \\
        & = \sum_{1 \leq x \leq k} \left[ \left( \frac{x}{n} \right)^{1 / 4}  \left( \frac{e(2d+1)}{\eta} \right)^{1/2} \right]^{\eta p n x}.
    \end{align*}
    Here, in the first inequality we used the bound $\binom{n}{k} \leq (en/k)^k$; in the second inequality we used the choice of $C$ and $p \geq C/n$ to deduce $d+1 \leq \eta p n / 4$, and in the third inequality we used that $d \geq 4$ to deduce $e^{d+1}/d^d \leq 1$.
    
    We separate the last sum in two terms $S_1$ and $S_2$, where $S_1$ consists on the sum of the first $k_1 := (c/p)^{1/2}$ terms, and $S_2$ consists of the sum of the terms from $k_1$ to $k$.
    We will show that both $S_1$ and $S_2$ are $o(1)$,
    from which we conclude that the lemma holds.
    
    We begin by considering $S_1$.
    Since $p \geq C/n$ we have $k_1 = O(n^{1/2})$ and therefore for each $x \leq k_1$ we have $x/n = O(n^{-1/2})$.
    Thus we have $(x/n)^{1/4} = O(n^{-1/8})$, and therefore
    \begin{align*}
        S_1
        & = \sum_{1 \leq x \leq k_1} \left[ \left( \frac{x}{n} \right)^{1 / 4}  \left( \frac{e(2d+1)}{\eta} \right)^{1/2} \right]^{\eta p n x} 
        \leq \sum_{1 \leq x \leq k_1} \left( O(n^{-1/8}) \right)^{\eta p n x} \\
        & \leq \sum_{1 \leq x \leq k_1} \left[ O(n^{-\eta p n /8}) \right]^x 
        \leq  \sum_{1 \leq x \leq k_1} \left[ O(n^{-\eta C /8}) \right]^x,
    \end{align*}
    where the last inequality follows since $p \geq C/n$, which we use to establish $O(n^{- \eta p n/8}) = O(n^{- \eta C /8})$.
    For large enough $n$, the term $O(n^{- \eta C/8})$ is less than $1$, and therefore
    \begin{align*}
        S_1
        & \leq \sum_{1 \leq x \leq k_1} \left[ O(n^{-\eta C /8}) \right]^x
        \leq \sum_{1 \leq x \leq k_1} O(n^{-\eta C /8}) = O(k_1 n^{- \eta C /8})=O(n^{1/2 - \eta C/8}).
    \end{align*}
    Since $C \geq 8 \eta^{-1}$, we conclude that $S_1 = O(n^{-1/2}) = o(1)$.
    
    Now we consider $S_2$.
    For each $x \leq k = cn$ we have $x/n \leq c$.
    Hence, we have
    \begin{align*}
        S_2
        & = \sum_{k_1 \leq x \leq k}  \left[ \left( \frac{x}{n} \right)^{1 / 4}  \left( \frac{e(2d+1)}{\eta} \right)^{1/2} \right]^{\eta p n x} \\
        & \leq \sum_{k_1 \leq x \leq k} \left[ c^{1 / 4}  \left( \frac{e(2d+1)}{\eta} \right)^{1/2} \right]^{\eta p n x}
        \leq \sum_{k_1 \leq x \leq k} \left( \frac{1}{2} \right)^{\eta p n x},
    \end{align*}
    where in the last inequality we used $c \leq \eta^2/(16 e^2 (2d+1)^2)$.
    Using the bound $x \geq k_1 = (c/p)^{1/2}$ and $p \geq C/n$, we have that $\eta p n x = \Omega(n^{1/2})$.
    Since $k = O(n)$, we conclude that
    \begin{align*}
        S_2
        & \leq \sum_{k_1 \leq x \leq k} \left( \frac{1}{2} \right)^{\Omega(n^{1/2})} = O(k 2^{-\Omega(n^{1/2})}) = O(n 2^{-\Omega(n^{1/2})}) = o(1),
    \end{align*}
    as required.
\end{proof}

\subsection{Sparsification}
The following lemma allows us to find subgraphs of graphs which still retain certain minimum degree and joinedness properties, but are way sparser.

\begin{lemma}[Sparsificator] \label{lemma:sparsificator}
	Given $\eta, c > 0$ and $0 < \delta \leq 1/4$, there exists $C \geq 0$ such that the following holds.
	Let $G$ be an $n$-vertex graph, let $H$ be a (not necessarily spanning) subgraph of $G$, let $p \geq C/n$, and let $X, Y \subseteq V(G)$ be such that
	\begin{enumerate}
		\item $\delta(H) \geq \eta pn$,
		\item for each pair of disjoint $2cn$-sets of vertices $A \subseteq X$, $B \subseteq Y$, $e_{H}(A,B) \geq p c^2 n^2$, and
		\item $|E(H)| \leq pn^2$. 
	\end{enumerate}
	Then there exists $H' \subseteq H$ with $V(H') = V(H)$ such that
	\begin{enumerate}
		\item $\delta(H') \geq \delta \eta pn $,
		\item for each pair of disjoint $2cn$-sets of vertices $A \subseteq X$, $B \subseteq Y$, $e_{H'}(A,B) > 0$, and
		\item $|E(H')| \leq 4 \delta pn^2$. 
	\end{enumerate}
\end{lemma}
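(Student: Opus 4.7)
The plan is to construct $G'$ as the union of two subgraphs on $V(G)$: a deterministic part $G_1$ enforcing the minimum-degree bound, and a random part $G_2$ enforcing the joinedness condition. The reason for the split is that a one-shot random edge sparsification (keeping each edge with probability $2\delta$) would fail to preserve the minimum degree in the regime where $pn$ is close to its lower bound $C$: the Chernoff deviation probability per vertex is only $\exp(-\Omega(\delta \eta pn)) = \exp(-\Omega(1))$ and cannot beat a union bound over the $n$ vertices. The deterministic step below avoids this obstruction.

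For the deterministic part, for each vertex $v$ I would select an arbitrary set $S_v \subseteq N_G(v)$ with $|S_v| = \lceil \delta \eta p n \rceil$ (which exists since $d_G(v) \geq \eta pn$), and set $E_1 := \{uv \in E(G) : v \in S_u \text{ or } u \in S_v\}$. By construction, $G_1 := (V, E_1)$ has minimum degree at least $\lceil \delta \eta p n \rceil \geq \delta \eta pn$, while counting each edge in $E_1$ by an endpoint that marked it gives $|E_1| \leq n\lceil \delta \eta pn\rceil \leq \delta \eta pn^2 + n$. The hypotheses $|E(G)| \leq pn^2$ and $\delta(G) \geq \eta pn$ together imply $\eta \leq 2$, so $|E_1| \leq 3\delta pn^2$ provided $C \geq 1/\delta$ (ensuring $n \leq \delta pn^2$).

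For the random part, I would include each edge of $G$ independently with probability $q := K/(pn)$, where $K = K(c,\delta) > 0$ is a constant to be chosen sufficiently large, and denote by $E_2$ the resulting edge set. For each pair of disjoint $2cn$-sets $A \subseteq X$, $B \subseteq Y$, the assumption $e_G(A,B) \geq pc^2 n^2$ gives that the probability that $E_2$ misses every edge of $G[A,B]$ is at most $(1-q)^{pc^2 n^2} \leq e^{-Kc^2 n}$. A union bound over the at most $\binom{n}{2cn}^2 \leq (e/(2c))^{4cn}$ such pairs then yields a failure probability of $\exp(-\Omega(n))$ once $K$ is large enough in terms of $c$. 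Simultaneously, $\mathbf{E}|E_2| \leq qpn^2 = Kn$, and a Chernoff upper-tail bound gives $|E_2| \leq 2Kn$ with probability $1 - \exp(-\Omega(n))$. Hence a realization of $E_2$ satisfying both the joinedness and the edge-count property can be fixed.

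Setting $G' := (V, E_1 \cup E_2)$, the minimum-degree and joinedness conclusions are immediate, while $|E(G')| \leq |E_1| + |E_2| \leq 3\delta pn^2 + 2Kn \leq 4\delta pn^2$ holds once $C \geq 2K/\delta$ (so that $2Kn \leq \delta pn^2$). The main delicate point is the joint calibration of $q$ and $C$: the sampling rate $q$ must be at least $\Omega(1/(pn))$ so that the union bound over the exponentially many pairs $(A,B)$ succeeds, which pins $K$ as a function of $c$; simultaneously, $C$ must dominate $K/\delta$ so that the random sample contributes only $O(n) \leq \delta pn^2$ edges, allowing the final bound $|E(G')| \leq 4\delta pn^2$ to absorb both contributions.
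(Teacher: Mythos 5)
Your proof is correct, and it takes a genuinely different route from the paper. The paper applies a single random sparsification of $G$ keeping each edge independently with probability $2\delta$, and then must contend with exactly the obstruction you flag up front: the per-vertex Chernoff failure probability for $\delta(G') \geq \delta \eta pn$ is only $\exp(-\Omega(\delta \eta pn))$, which cannot beat a union bound over the $n$ vertices when $pn = \Theta(1)$. The paper resolves this not with a union bound but with the Lov\'asz Local Lemma (using $G$ itself as the dependency graph), which shows the minimum-degree event holds with probability at least $e^{-n}$, while joinedness and the edge count each fail with probability at most $e^{-2n}$, so a good $G'$ exists. Your alternative sidesteps LLL entirely: a deterministic greedy choice of $\lceil \delta \eta pn \rceil$ marked neighbours per vertex secures the minimum degree unconditionally and contributes at most $\delta \eta pn^2 + n \leq 3\delta pn^2$ edges (using the observation $\eta \leq 2$ coming from the hypotheses on $\delta(G)$ and $|E(G)|$), and a much sparser independent sample at rate $q = K/(pn)$ then handles joinedness --- here a plain union bound over the $(e/(2c))^{4cn}$ pairs $(A,B)$ does suffice since each fails with probability at most $e^{-Kc^2 n}$ --- while contributing only $O(n) \leq \delta pn^2$ extra edges once $C \gg K/\delta$. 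This is somewhat more elementary at the price of a two-stage construction; both arguments end up imposing the same kind of constraint $C \geq C(\eta,c,\delta)$.
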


%

\begin{proof}[Proof of \cref{lemma:sparsificator}]
	Assume $1/C \ll \eta, \delta, c$.
	Let $H' \subseteq H$ be a graph formed by selecting each edge of $G$ with probability $2 \delta$.
	We will show that $H'$ satisfies the required properties simultaneously with non-zero probability.
	For this, we state and prove four claims.
	
	\begin{enumerate}[\textbf{(A\arabic*)},topsep=1em]
        \item \label{claim:notmanyedges} $\probability[|E(H')| \leq 4 \delta n^2 p] \geq 1 - e^{-2n}$.
    \end{enumerate}
	
    Indeed, we have $|E(H)| \leq n^2 p$, and thus the expected number of edges in $H'$ is at most $2 \delta |E(H)| \leq 2 \delta n^2 p$.
	Therefore, by Chernoff's inequality, we have that $|E(H')| \leq 4 \delta n^2 p$ happens with probability at least $1 - \exp(- 3 \delta n^2 p / 4 ) \geq 1 - e^{-2n}$, where in the last inequality we used $p \geq C/n$ and that $C$ is large.
	This proves~\ref{claim:notmanyedges}.

    \begin{enumerate}[\textbf{(A\arabic*)},resume,topsep=1em]
        \item \label{claim:stilljoined}
        With probability at least $1 - e^{-2n}$, each pair of disjoint $2cn$-sets of vertices $A \subseteq X$, $B \subseteq Y$, $e_{H'}(A,B) > 0$.
    \end{enumerate}

	Indeed, for a given pair of disjoint $2cn$-sets of vertices $A$ and $B$,
	the probability that $e_{H'}(A,B) = 0$ is at most $(1 - \delta)^{pc^2n^2} \leq e^{-\delta p c^2 n^2}$.
	Taking an union bound over the at most $\binom{n}{2cn}^2$ possible choices of $A$ and $B$, the probability of failure is at most
	\[ \binom{n}{2cn}^2 e^{-\delta p c^2 n^2} \leq n^{4cn} e^{-\delta p c^2 n^2} = e^{4cn  - \delta pc^2 n^2} \leq e^{-2n}, \]
	where in the last inequality we used $p \geq C/n$ and $1/C \ll \delta, c$ to deduce $\delta pc^2n \geq \delta C c^2 \geq 2 + 4c$.
	Thus~\ref{claim:stilljoined} holds.

    \begin{enumerate}[\textbf{(A\arabic*)},resume,topsep=1em]
    \item 
     \label{claim:firstrequisiteOWLLL}
		For each $v \in V(H)$, $\probability[d_{H'}(v) < \delta \eta n p] \leq 1 - e^{-1}$.
	\end{enumerate}	
To see this, let $v \in V(H)$ and $k = d_H(v)$.
	First, we note that
		\begin{align}
			(2 \delta k - \delta \eta n p)^2 & \geq \delta^2 k^2. \label{equation:chernoffvertexinequality}
		\end{align}
	Indeed, after rearranging and simplifying, we see this is equivalent to
	$k \geq \eta n p$, which holds by assumption.
		
	Note that $d_{H'}(v)$ is a Bernoulli random variable with expectation $2 \delta k$.
	Thus
	\begin{align*}
		\probability[d_{H'}(v) < \delta \eta n p]
			& = \probability[d_{H'}(v) - \expectation[d_{H'}(v)] < \delta \eta n p - 2 \delta k],
		\end{align*}
		and using a Chernoff bound we have
		\begin{align*}
		\probability[d_{H'}(v) < \delta \eta n p]
			& \leq \exp\left( -(2 \delta k - \delta \eta n p)^2/(4 \delta k) \right) \leq e^{- \delta k/4},
		\end{align*}
	where we used the inequality \eqref{equation:chernoffvertexinequality} in the last step.
    Next, by assumption and $p \geq C/n$ we have that $k = \deg_H(v) \geq \delta(H) \geq \eta p n \geq \eta C$, hence the last probability is at most $e^{- \delta \eta C / 4}$.
    Since $1/C \ll \eta$ we can assume that the last term is at most $1 - e^{-1}$.
	We conclude~\ref{claim:firstrequisiteOWLLL} is true.
	
	\begin{enumerate}[\textbf{(A\arabic*)},resume,topsep=1em]
        \item \label{claim:applyOWLLL}
		$\probability[\delta(H') \geq \delta \eta n p] \geq e^{-n}$.
	\end{enumerate}	

    To see this, 
    for each $v \in V(H)$, let $A_v$ be the event that $d_{H'}(v) \geq \delta \eta n p$.
    We have that
    \begin{equation*}
        \probability[\delta(H') \geq \delta \eta n p] =
        \probability\left[ \bigcap_{v \in V(H)} A_v \right] \geq 
        \prod_{v \in V(H)} \probability[A_v],
    \end{equation*}
    where the last inequality follows from the FKG inequality, since all the events $A_v$ are monotone increasing.
    Using \ref{claim:firstrequisiteOWLLL} we have $\probability[A_v] \geq e^{-1}$ for each $v \in V(G)$, and therefore
    \begin{equation*}
        \probability[\delta(H') \geq \delta \eta n p] \geq
        \prod_{v \in V(H)} e^{-1} = e^{-n},
    \end{equation*}
    as desired.

	Now, by \ref{claim:notmanyedges}, \ref{claim:stilljoined} and \ref{claim:applyOWLLL}, we have that, with probability at least $e^{-n} - 2e^{- 2n} > 0$, all the required events are true simultaneously.
	Thus there exists $H' \subseteq H$ with the desired characteristics.
\end{proof}

\subsection{Proof of the Cycle finder lemmas}
Now we can give the proof of \cref{lemma:cyclesprescribedlength}.

\begin{proof}
    We begin by defining $C, \varepsilon, c, b, \delta$ satisfying the hierarchies  $1/C \ll \varepsilon \ll c \ll b \ll \delta \ll \lambda,  1$. 
    We collect a series of properties which occur with high probability in $G$.
    By \cref{lemma:upper-uniform} we have that with probability $1 - o(1)$,
    \begin{enumerate}[\textbf{(P\arabic*)},topsep=1em]
        \item \label{assumption:Guniform} $G$ is $(c/4, p)$-uniform, and
        \item \label{assumption:Gsparse} $|E(G)| \leq pn^2$.
    \end{enumerate}
    Apply \cref{lemma:expansionfromminimumdegree} with $\delta \lambda, b, 6$ playing the roles of $\eta, c, d$; this is possible since $1/C, b \ll \delta, \lambda, 1$.
    Then, with probability $1 - o(1)$ we have that
    \begin{enumerate}[\textbf{(P\arabic*)} ,topsep=1em,resume]
        \item \label{assumption:mindegreegivesexpansion} each subgraph $H \subseteq G$ with $\delta(H) \geq \delta \lambda pn$ is a $(bn, 6)$-expander.
    \end{enumerate}
    
    Let $\mathcal{F}$ be the family of all graphs $F$ with $V(F) \subseteq V(G)$ with $|V(F)| \geq \lambda n / 2$, at most $5 \delta p n^2$ edges, which are $(bn, 5)$-expanders, $2cn$-joined, and do not contain a cycle on $V(F)$.
    By \cref{lemma:tapaoenboosters} (applied here with $|V(F)|, bn, 5, 2cn, 2cn$ playing the roles of $n, k, d, m, M$, respectively), we deduce that for each graph $F \in \mathcal{F}$ there is a set $B_F$ of boosters of $F$ of size at least $\frac{1}{16}\binom{|V(F)|}{2} \geq \lambda^2 n^2/66$.
    Then $(\mathcal{F}, \{ B_F\}_{F \in \mathcal{F}} )$ is a $(5\delta p, \lambda^2 / 66)$-boosterable family.
    
    Then, by the choice $\delta \ll \lambda$, we can apply \cref{lemma:conditionalbooster} (with $5 \delta, \lambda^2/66$ playing the roles of $\delta, \alpha$) to deduce that with probability $1 - o(1)$ we have that
    \begin{enumerate}[\textbf{(P\arabic*)},topsep=1em,resume]
        \item \label{assumption:expansiongivesboosters} for each $F \in \mathcal{F}$ with $F \subseteq G$, either $F$ contains a cycle on $v(F)$ vertices or there are at least $\lambda^2 p n^2 / 132$ boosters for $F$ in $G[V(F)]$.
    \end{enumerate}
    From now on, we condition on \ref{assumption:Guniform}--\ref{assumption:expansiongivesboosters} being true (which happen simultaneously with probability $1 - o(1)$), and we show that the desired properties hold.
    
    Now, let $H \subseteq G$ satisfy \ref{item:cyclesprescribedlength-largevertexset}--\ref{item:cyclesprescribedlength-manyedges} from the statement. By \ref{assumption:Guniform}, we get that for every two sets $X, Y$ of size $2cn$ each, there are at least $(1 - c/4)p(2cn)^2 \geq 2 c^2 p n^2$ edges between them in $G$.
    Since $\varepsilon \ll c$, then~\ref{item:cyclesprescribedlength-manyedges} implies that for every two sets $A, B \subseteq V(H)$ of size $2cn$ each, \[ e_{H}(A,B) \geq e_G(A,B) - \varepsilon p n^2 \geq 2 c^2 p n^2 - \varepsilon p n^2 > c^2 n^2 p.\]
    In particular, $H$ is $2cn$-joined.
    Also, since $\delta(H) \geq \lambda n p \geq \lambda \delta p n$, using~\ref{assumption:mindegreegivesexpansion} we deduce that $H$ is also a $(bn,6)$-expander.
    
    We need to find $C_\ell$ as a subgraph of $H$, for each $3\log_2 n \leq \ell \leq |V(H)|$.
    First, we will use `double brooms' (Definition~\ref{definition:doublebroom}) to find `short' cycles, meaning of length $\ell$ between $3 \log_2 n$ and $|V(H)| - 50cn$.
    Fix such a value of $\ell$.
    Let $h = \lceil \log_2(2cn) \rceil$.
    Since $2h+1 = 2\lceil \log_2(2cn) \rceil + 1 \leq 2 \log_2(2cn) + 3 \leq 3 \log_2n \leq \ell$, we have that $\ell - 2h > 0$.
    Let $T_\ell$ be a $(h, \ell - 2h + 1)$-double-broom, that is, $T_h$ is a tree consisting of two binary trees of depth $h$ whose roots are joined by a path with exactly $\ell - 2h$ edges.
    Note that $\Delta(T_\ell) \leq 3$, $|V(T_\ell)| = 2(2^{h+1} - 1)+ \ell - 2h - 1$, each end set of $T_\ell$ has $2^{h} \geq 2cn$ leaves; and that leaves of $T_\ell$ which are in distinct end sets have distance exactly $\ell$ in $T_h$.
    If $T_\ell \subseteq H$, then since $H$ is $2cn$-joined, we can find an edge between the two sets of leaves of $T_\ell$; and this implies that $C_{\ell} \subseteq H$, as desired.
    We will use Lemma~\ref{lemma:embedding:expander} to argue that $T_\ell \subseteq H$.
    Applying the lemma with $2cn, |V(H)|, 6$ playing the roles of $m, n, d$; we deduce that $T_\ell \subseteq H$ as long as the inequality $|V(T_\ell)| \leq |V(H)| - 30cn$ holds.
    We have that $2^{h+2} \leq 2^{\log_2(2cn) + 3} \leq 16cn$, and therefore
    \begin{align*}
        |V(T_\ell)|
        & = 2(2^{h+1} - 1)+ \ell - 2h - 1 \leq 16cn + \ell
        \leq |V(H)| - 30cn,
    \end{align*}
    as required.
    
    From now on, we will assume that $\ell$ is fixed and $|V(H)| - 50cn <  \ell \leq |V(H)|$ holds; and our task is to prove that $C_\ell \subseteq H$.
    We wish to apply the Sparsificator (\cref{lemma:sparsificator}) with the parameters $\lambda, c, \delta, H, V(H), V(H)$ playing the roles of $\eta, c, \delta, H, X, Y$, respectively.
    Let us check that the hypothesis of that lemma hold with our choice of parameters.
    Indeed, by assumption, we have $1/C \ll \lambda, c, \delta$.
    We also have $\delta(H) \geq \lambda p n$ from \ref{item:cyclesprescribedlength-largevertexset}.
    We already argued that for any two $2cn$-sized sets $A,B \subseteq V(H)$, the bound $e_H(A,B) > c^2 n^2 p$ holds.
    Finally, we have $|E(H)| \leq |E(G)| \leq pn^2$ by \ref{assumption:Gsparse}.
    Hence, the lemma gives $H' \subseteq H$ with $V(H') = V(H)$ such that
    \begin{align}
        \delta(H') \geq \delta \lambda n p, \label{equation:G''mindegree} \\
        \text{$H'$ is $2cn$-joined, and} \label{equation:G''joined} \\
        |E(H')| \leq 4 \delta n^2 p. \label{equation:G''sparse}
    \end{align}
    Using \eqref{equation:G''mindegree} and \ref{assumption:mindegreegivesexpansion}, we deduce
    \begin{equation}
        \text{$H'$ is a $(bn, 6)$-expander.} \label{equation:G''expander}
    \end{equation}
    Let $r = |V(H)| - \ell$.
    Since $\ell \geq |V(H)| - 50 cn$ and $c\ll b$,
    we get that $r \leq bn/10$.
    Then \eqref{equation:G''expander} allows us to apply \cref{lemma:friedmanpippenger-expansion}, and we deduce that there exists a set $X \subseteq V(H)$ of size precisely $r$ such that, setting $H^\ast := H' - X$,
    \begin{equation}
        \text{$H^\ast$ is a $(bn, 5)$-expander on $\ell$ vertices.} \label{equation:Gastexpander}
    \end{equation}
    Since $H^\ast$ is an induced subgraph of $H'$, from \eqref{equation:G''joined}--\eqref{equation:G''sparse} we get that
    \begin{align}
        \text{$H^\ast$ is $2cn$-joined, and} \label{equation:Gastjoined} \\
        |E(H^\ast)| \leq 4 \delta n^2 p. \label{equation:Gastsparse}
    \end{align}
We now define a sequence $H_0 \subseteq H_1 \subseteq H_2 \subseteq \dotsb \subseteq H_n$ such that, for each $i \geq 0$,
    \begin{enumerate}[\textbf{\upshape{(B\arabic*)}}]
        \item \label{item:usingboosters-containment} $H^\ast \subseteq H_i \subseteq H - X$,
        \item \label{item:usingboosters-sparse} $|E(H_i)| \leq 4 \delta n^2 p + i$,
        \item \label{item:usingboosters-hamilton} if $i > 0$, either $H_i$ is Hamiltonian, or $H_i$ has a longer path than  $H_{i-1}$.
    \end{enumerate}
    We start with $H_0 := H^\ast$, which clearly satisfies \ref{item:usingboosters-containment}--\ref{item:usingboosters-hamilton} for $i = 0$.
    Suppose now that $i \geq 1$ and that we have defined $H_0 \subseteq \dotsb \subseteq H_{i-1}$ satisfying \ref{item:usingboosters-containment}--\ref{item:usingboosters-hamilton}.
    If $H_{i-1}$ is Hamiltonian, then setting $H_i := H_{i-1}$ satisfies \ref{item:usingboosters-containment}--\ref{item:usingboosters-hamilton} and we are done.
    Otherwise, we can assume that $H_{i-1}$ is not Hamiltonian.
    By \ref{item:usingboosters-containment}, together with \eqref{equation:Gastexpander}--\eqref{equation:Gastjoined}, we get that $H_{i-1}$ is a $(bn,5)$-expander and $2cn$-joined.
    From \eqref{equation:Gastsparse} and \ref{item:usingboosters-sparse}, we get that $|E(H_{i-1})| \leq 4 \delta n^2 p + i \leq 4 \delta n^2 p + n \leq 5 \delta n^2 p$.
    We also have $|V(H^\ast)| = \ell \geq |V(H)| - 50cn \geq (\lambda - 50c)n \geq \lambda n/2$.
    Therefore, $H_{i-1} \in \mathcal{F}$.
    Then \ref{assumption:expansiongivesboosters} implies that there are at least $\lambda^2 p n^2 / 66$ boosters for $H_{i-1}$ in $G[V(H_{i-1})]$.
    Since $\varepsilon \ll \lambda$; by \ref{item:cyclesprescribedlength-manyedges}, at least one of these edges, say $e$, actually belongs to $H[V(H^\ast)]$,
    and thus to $H - X$.
    Then, the choice $H_i := H_{i-1} + e$ satisfies \ref{item:usingboosters-containment}--\ref{item:usingboosters-hamilton} by the definition of booster.
    
    Since a path in $H - X$ cannot have length larger than $n$, we must have that $H_n$ is Hamiltonian, meaning that $H$ has a cycle of length $|V(H_n)| = |V(H^\ast)| = \ell$, as required.
\end{proof}

It remains to prove the Bipartite cycle finder lemma, \cref{lemma:cyclesprescribedlength-bipartite}.
The proof is essentially the same as the proof of Lemma~\ref{lemma:cyclesprescribedlength}, so we shall only comment on the proof briefly.

\begin{proof}[Proof of \cref{lemma:cyclesprescribedlength-bipartite} (sketch)]
    We follow essentially the same steps which were used in the proof of Lemma~\ref{lemma:cyclesprescribedlength}, with the corresponding changes for bipartite graphs which we now explain.
    We still keep the properties \ref{assumption:Guniform} and \ref{assumption:Gsparse}, but we replace \ref{assumption:mindegreegivesexpansion} and \ref{assumption:expansiongivesboosters} for their bipartite counterparts. By \cref{lemma:expansionfromminimumdegree} we have with high probability that
     \begin{enumerate}[\textbf{(P\arabic*)} ,topsep=1em]
       \setcounter{enumi}{2}
        \item \label{assumption:mindegreegivesbipexpansion} each bipartite subgraph $H$ with $\delta(H) \geq \delta \lambda pn$ is a $(bn, 6)$-bipartite-expander.
    \end{enumerate}    
    Moreover, we define the family $\mathcal{F}$ of all bipartite graphs with $V(F)\subset V(G)$, on classes $V_1, V_2$ with $|V_1| \geq |V_2|$, with at most $5 \delta p n^2$ edges, which are $(bn, 4)$-expanders, $(bn, n/12)$-bipartite-joined. By Lemma \ref{lemma:bipartite-boosters} each of the graphs in $\mathcal{F}$ has at least $n^2/8$ bipartite boosters. Therefore applying \cref{lemma:conditionalbooster} we deduce that the following property holds with high probability
    \begin{enumerate}[\textbf{(P\arabic*)} ,topsep=1em]
       \setcounter{enumi}{3}
        \item \label{assumption:expansiongivesbiboosters} for each $F\in \mathcal{F}$ with $F\subset G$, either $F$ contains a cycle on $2|V_2|$ vertices or there there at keast $pn^2/16$ bipartite boosters for $F$ in the biparte subgraph of $G$ induced by $V_1\times V_2$.
    \end{enumerate} 

    Now we are given a bipartite $H \subseteq G$ as in the statement, with parts $V_1, V_2$.
    The minimum degree property implies that $H$ is a $(bn, 6)$-bipartite-expander.
    Again, we can use double brooms to find all cycles of even length between $3 \log_2 n$ and $2|V_2| - 50 c n$ (the double brooms can be found using Corollary~\ref{corollary:extendable:bipartite} with $S = \emptyset$, see also the proof of Lemma~\ref{lemma:connectingpath} for more details).
    We have therefore reduced the problem to find all cycles of even length $2\ell$ which are between $2|V_2| - 50 cn$ and $2|V_2|$.
    First, we sparsify $H$ using \cref{lemma:sparsificator}, obtaining $H'$.
    With this, deduce that $H'$ is bipartite expander, and using \cref{lemma:friedmanpippenger-expansion-bipartite} we can remove $|V_2|-\ell$ vertices from $V_2$ to pass to $H^\ast$ which is still a bipartite expander, with vertex classes $V_1, V'_2$, and $|V'_2| = \ell$.
    Again, we can argue that $H^\ast$ is a bipartite expander, suitably bipartite-joined, and is sparse, and we can do the booster-adding procedure as before.
\end{proof}

\section{Proof of the main theorem} \label{section:mainproof}

As sketched in the beginning, the key step in the proof of both cases, odd and even, consists of finding a more refined version of the monochromatic subgraphs given by our Stability lemma (Theorem~\ref{theorem:stability:1}); and then applying the Cycle finder lemmas (Lemma~\ref{lemma:cyclesprescribedlength} and Lemma~\ref{lemma:cyclesprescribedlength-bipartite}) directly in those monochromatic subgraphs.
This `refining' step corresponds to Step 2 outlined in Section~\ref{section:proofsketch}.

We now sketch this refinement step. To prove this, we will start from a partition $V'_1, V'_2$, as given by the Stability lemma, and then we will reallocate some vertices carefully.
This will be done first by removing vertices having only a few neighbours in one of $V'_1, V'_2$; and then by reallocating vertices if they have many neighbours in the `wrong colour' in one of the $V'_1, V'_2$.
The key observation here is that at the end of this cleaning, no more than $O(1/p)$ vertices are removed from $V'_1, V'_2$, which is shown by tracking carefully how the vertices are removed.

The next lemma will be used in this `fine stability' analysis.
It will allow us to find a long path between any pair of vertices in different parts of an expander bipartite graph.
The proof follows from tree-embedding considerations in bipartite expanders, and is given for completeness in Appendix~\ref{section:appendix-proofs}.

\begin{lemma}\label{lemma:connectingpath}Let $n,m,d\in\mathbb N$ satisfy $n\ge 6dm$, and $d\ge 7$, and suppose that $n$ is odd.
Let $G$ be a bipartite graph with parts $V_1$ and $V_2$ such that $|V_1|,|V_2|\ge 3n/2$,
and suppose $G$ is $(m,d)$-bipartite-expander and m-joined.
Then, for every $s_1\in V_1$, $s_2\in V_2$, there exists an $(s_1,s_2)$-path in $G$ of length $n-2$. 
\end{lemma}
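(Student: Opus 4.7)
The plan is to build the desired $(s_1,s_2)$-path as the concatenation of two vertex-disjoint brooms linked by a single edge between their leaf sets. Set $h := \lceil \log_2 m \rceil + 1$, so that $2^h \geq 2m$, and set $\ell := (n - 3 - 2h)/2$, which is a nonnegative integer because $n$ is odd and $n \geq 6dm$ is much larger than $2h+3$. Write $\mathrm{Br}(\ell, h)$ for the tree obtained by attaching a complete binary tree of depth $h$ at one end of a path of length $\ell$; this tree has maximum degree $3 \leq d/2$ (we use $d \geq 7$), at most $\ell + 2^{h+1}$ vertices, and exactly $2^h$ leaves in its binary-tree part.

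First I would embed in $G$, rooted at $s_1$, a copy $B_1$ of $\mathrm{Br}(\ell,h)$, and then a copy $B_2$ of $\mathrm{Br}(\ell,h)$ rooted at $s_2$ and vertex-disjoint from $B_1$. The existence of these rooted, vertex-disjoint embeddings follows from the bipartite analogue of Lemma~\ref{lemma:embedding:expander} provided in Appendix~\ref{appendix:bipartiteextendibility}: $G$ is an $(m,d)$-bipartite-expander and $m$-joined, each part has at least $3n/2$ vertices, and the two brooms together contain at most $2\ell + 2^{h+2} \leq n + O(m)$ vertices, so in either part we occupy well under $3n/2$ vertices, leaving enough slack for a Friedman-Pippenger/Haxell style extension argument to succeed.

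Let $L_1$ and $L_2$ denote the leaf sets of the binary-tree portions of $B_1$ and $B_2$, so $|L_1| = |L_2| = 2^h \geq m$. A parity check locates them: a leaf of $B_1$ is at distance $\ell+h$ from $s_1 \in V_1$, so $L_1 \subseteq V_1$ exactly when $\ell+h$ is even; symmetrically, $L_2 \subseteq V_2$ exactly when $\ell + h$ is even. Thus, in either case, $L_1$ and $L_2$ lie in opposite parts of $G$. Applying the $m$-joinedness of $G$ to $L_1$ and $L_2$ yields an edge $uv \in E(G)$ with $u \in L_1$ and $v \in L_2$. Concatenating the unique $s_1$-$u$-path inside $B_1$, the edge $uv$, and the unique $v$-$s_2$-path inside $B_2$ then produces an $(s_1,s_2)$-path of length $(\ell+h) + 1 + (h+\ell) = 2\ell + 2h + 1 = n-2$, as desired.

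The principal technical obstacle is the bipartite rooted tree embedding in Step 1, which needs a version of the Friedman-Pippenger/Haxell extension method adapted to bipartite expanders that are also joined; this is the content of the appendix, and is where the hypotheses $d \geq 7$ and $n \geq 6dm$ are used. The rest of the argument is essentially arithmetic: the identity $2\ell + 2h + 1 = n-2$ forces the path length, and the parity check guarantees that $L_1$ and $L_2$ straddle the bipartition so that the $m$-joinedness of $G$ supplies the single crossing edge needed to close up the path.
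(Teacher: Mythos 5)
Your proposal is correct and follows essentially the same route as the paper's own proof: both build two rooted brooms (a long path with a binary tree attached) from $s_1$ and $s_2$, embed them disjointly via the bipartite-extendability machinery of Appendix~\ref{appendix:bipartiteextendibility}, and use $m$-joinedness to find one crossing edge between the leaf sets, with the same length bookkeeping (the paper uses $\lceil\log_2 m\rceil$ where you use $\lceil\log_2 m\rceil+1$, and bounds maximum degree by $(d-1)/2$ rather than $d/2$, but these are cosmetic).
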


Now we are ready to prove our main result.

\begin{proof}[Proof of Theorem~\ref{theorem:main}]

Since part~\ref{item:bestpossible} has already been proven, we only need to show~\ref{item:maincycles}.
We begin by choosing constants 
\[1/n\ll 1/C\ll\eta\ll\delta\ll\beta\ll\alpha\ll c \ll 1\]
so that Theorem~\ref{theorem:stability:1} holds for $\beta/4$, $\delta$ and $C$ playing the role of $K$, and Lemmas~\ref{lemma:upper-uniform},~\ref{lemma:upper-small-uniform},~\ref{lemma:gnp:1},~\ref{lemma:cyclesprescribedlength}, ~\ref{lemma:cyclesprescribedlength-bipartite},~\ref{lemma:expansionfromminimumdegree} hold.
Let $N= R(C_n)+C/p$ and let $G=G(N,p)$ with $p\geq C/N$. Each of the following properties holds with high probability.
\begin{enumerate}[\textbf{(G\arabic*)}]
    \item\label{Gnp:1}  $G$ is $(\eta,p)$-uniform,
    \item\label{Gnp:1s}  $G$ is $(\eta,p)$-upper-small-uniform,
    \item\label{Gnp:2} every subgraph $G'\subset G$ with $\delta(G')\ge pn/100$ is a $(cn,10)$-expander, 
    \item\label{Gnp:3} every bipartite subgraph $G'\subset G$ with $\delta(G')\ge pn/100$ is a $(cn,10)$-bipartite-expander, and
    \item\label{Gnp:4} for every pair of disjoint subsets $X,U\subset V(G)$, with $|X|\ge 1000/p$ and $|U|\ge n/4$, we have 
    \[\frac{p}{2}|U||X|\le e(U,X)\le 2p|U||X|.\]
\end{enumerate}

By conditioning on those events, we can assume that \ref{Gnp:1}--\ref{Gnp:4} hold for $G$ from now on.
We separate the proof in two cases depending on the parity of $n$.

\medskip \noindent\emph{Case 1: $n$ is odd.}
In this case, $R(C_n) = 2n-1$ and thus we can assume $N = 2n + C/p$.
Suppose that there exists a $2$-colouring  $c:E(G)\to \{R,B\}$ such that $G$ has no monochromatic cycle of length $n$.
Let $G_R$ and $G_B$ denote the red graph and blue graph, respectively.
By modifying the partition given by the Stability lemma, we obtain the following `medium-fine' stability structure.

\begin{claim}[Fine stability, odd case]\label{claim:stability:odd}
There exists a partition $V(G)=V_1\cup V_2\cup W$ such that 
\begin{enumerate}[\textbf{\upshape{(S\arabic*)}}]
    \item\label{stability:1} $|V_1|,|V_2|\ge (1-\alpha)n$,
    \item\label{stability:3} $e(G_R[V_i])\ge (1-\alpha)p\tbinom{|V_i|}{2}$ for $i\in \{1,2\}$,
    \item\label{stability:4} $e(G_B[V_1,V_2])\ge (1-\alpha)p|V_1||V_2|$, and
    \item\label{stability:2} for $i\in \{1,2\}$, every vertex $v\in V_i$ satisfies $d_R(v,V_i)\ge pn/100$ and $d_B(v,V_{3-i})\ge pn/100$.
\end{enumerate}
\end{claim}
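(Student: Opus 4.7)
The proof plan is to start from the coarse partition produced by the Stability lemma and to iteratively remove the violators of \ref{stability:2}, arguing that no more than $O(\beta n)$ vertices are ever displaced into $W$, where $\beta\ll\alpha$ is the constant used to apply Theorem~\ref{theorem:stability:1}. First, invoke Theorem~\ref{theorem:stability:1} with $\beta$ in place of $\alpha$: since the given colouring has no monochromatic $C_n$ and $N \geq R(C_n) > (1-\delta)R(C_n)$, it must be $\beta$-odd-extremal, yielding a partition $V(G)=X\cup Y$ with $|X|,|Y|\geq (1-\beta)N/2$, $e_R(X)\geq (1-\beta)e(X)$, $e_R(Y)\geq (1-\beta)e(Y)$, and $e_B(X,Y)\geq (1-\beta)e(X,Y)$. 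Combined with the $(\eta,p)$-uniformity \ref{Gnp:1} of $G$ and $C/p \leq n$, this yields the quantitative estimates $|X|,|Y| = (1 \pm O(\beta))n$ and $e_B(X), e_B(Y), e_R(X,Y) = O(\beta p n^2)$.

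Next, refine the partition iteratively. Initialise $V_1^{(0)}=X$, $V_2^{(0)}=Y$, $W^{(0)}=\emptyset$, and while some $v \in V_i^{(t)}$ violates \ref{stability:2}, move it to $W^{(t+1)}$. The process terminates since $|V_1|+|V_2|$ strictly decreases; at termination \ref{stability:2} holds by construction. The key claim is that $|W| = O(\beta n)$ throughout, which yields \ref{stability:1} since $|V_i| \geq |X|-|W| \geq (1-\alpha)n$ for $\beta\ll\alpha$. Suppose, for contradiction, that at some moment $|W|$ exceeds a sufficiently large multiple of $\beta n$. By pigeonhole over the four symmetric cases (two parts times two possible reasons for removal), some subclass $W_1 \subseteq W \cap X$ of vertices that were removed because $d_R(v, V_1^{(t_v)}) < pn/100$ has $|W_1| \geq |W|/4 \gg \beta n$. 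For each such $v$, since $V_1^{(t_v)} \supseteq X\setminus W$, we obtain $d_R(v, X \setminus W) < pn/100$, hence $e_R(W_1, X\setminus W) \leq |W_1|pn/100$. On the other hand, as $|W_1|,|X\setminus W| \geq \eta N$ (using $|X\setminus W| \geq n/2$, which holds as long as $|W| \leq n/3$), the $(\eta,p)$-uniformity gives $e(W_1, X\setminus W) \geq (1-\eta)p|W_1||X\setminus W| \geq p|W_1|n/3$. Consequently $e_B(W_1, X\setminus W) \geq p|W_1|n/4$, but also $e_B(W_1, X\setminus W) \leq 2e_B(X) = O(\beta pn^2)$, forcing $|W_1| = O(\beta n)$, the sought contradiction.

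With the size of $W$ under control, properties \ref{stability:3} and \ref{stability:4} follow readily: since $|W| = O(\beta n)$, the number of $G$-edges incident to $W$ is $O(\beta pn^2)$ by $(\eta,p)$-uniformity, so $e(G_R[V_i]) \geq e(G_R[X]) - O(\beta pn^2)$ and $e(G_B[V_1,V_2]) \geq e(G_B[X,Y]) - O(\beta pn^2)$, both exceeding the required thresholds for $\beta\ll\alpha$. The main obstacle is the bound on $|W|$: a naive potential-function argument that tracks the total count of ``wrong-coloured'' edges fails because in $G(N,p)$ with $p = \Theta(1/n)$ individual vertex degrees have no useful lower bound, so removing a bad vertex need not decrease any natural potential by $\Omega(pn)$. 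The key is to argue collectively, applying $(\eta,p)$-uniformity to the aggregate set $W_1$ of all vertices removed for one common reason, rather than analysing individual removals one at a time.
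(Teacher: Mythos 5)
Your proposal is correct, but it takes a genuinely different route from the paper. The paper's proof of this claim uses a \emph{two-phase} refinement: first it removes the sets $X_i$ (vertices with $d_B(v,V'_i)\ge pn/100$) and $Y_i$ (vertices with $d_R(v,V'_{3-i})\ge pn/100$), bounding $|X_i\cup Y_i|=O(\beta n)$ directly from the stability estimates; then it iteratively discards vertices whose \emph{total} degree to one of the parts has dropped below $pn/25$, and bounds the number of such vertices by $O(1/p)$ using the joined-type property \ref{Gnp:4} rather than uniformity. The right-colour degree bounds in \ref{stability:2} then follow from the inequality $d_R(v,V_i)\ge d(v,V_i)-d_B(v,V_i)>pn/25-pn/100$. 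Your proposal collapses this into a single pass that directly evicts \ref{stability:2}-violators and controls the eviction set with an aggregate uniformity argument (note that $V_1^{(t_v)}\supseteq X\setminus W$ because the process is monotone, which is the crucial and correct observation). Both arguments are sound. What the paper's version buys is a finer decomposition of the wasted vertices: the first phase loses only $O(\beta n)$ and the second only $O(1/p)$, and that $O(1/p)$ bound is recycled verbatim right after the claim to prove $|B|<2000/p$ for the fine partition $V_1^*\cup V_2^*\cup B$. With your one-shot approach you only get $|W|=O(\beta n)$, which is enough for \ref{stability:1}--\ref{stability:2} but means the subsequent $|B|=O(1/p)$ estimate would have to be established separately (as the paper indeed does, using the same idea). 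One small point to tighten if you were to write this up: the contradiction argument needs to be applied at the \emph{first} time $|W^{(t)}|$ reaches $\lceil K\beta n\rceil$ (so that $|X\setminus W^{(t)}|\geq n/2$ is automatic and $|W_1|\geq K\beta n/4\gg\eta N$ licenses the use of $(\eta,p)$-uniformity); your phrasing ``at some moment $|W|$ exceeds a sufficiently large multiple of $\beta n$'' implicitly relies on this but doesn't make the $|W|\leq n/3$ hypothesis self-supporting.
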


\begin{proofclaim}
As a first step, we use Theorem~\ref{theorem:stability:1} (with $\beta/4$ playing the role of $\alpha$) to find a partition $V(G)=V'_1\cup V'_2$ with the following properties: 
\begin{enumerate}
    \item $|V'_1|,|V'_2|\ge (1-\beta/4)n \geq (1-\beta)n$;
    \item for $i\in\{1,2\}$, $e(G_R[V'_i])\ge (1-\beta/4)(1-\eta)p\binom{|V' _i|}{2} \geq (1-\beta)p\binom{|V' _i|}{2}$; and
    \item $e(G_B[V'_1,V'_2])\geq (1-\beta/4)(1-\eta)p|V'_1||V'_2| \ge (1-\beta)p|V'_1||V'_2|$.
\end{enumerate}
In the inequalities above we have used \ref{Gnp:1} and that $\eta\ll \beta$.

For $i\in\{1,2\}$, let $X_i$ be the set of vertices in $x\in V'_i$ such that $d_B(v,V_i')\ge pn/100$,
and let $Y_i$ be the set of vertices $x\in V_i'$ such that $d_R(v,V_{3-i}')\ge pn/100$.
Thus $X_i \cup Y_i$ are the vertices in $V'_i$ adjacent to many neighbours in the `wrong colour'.

For each $i\in\{1,2\}$, we have $|V'_i| \leq N\le 2n + C/p\le 3n$, and, since $G$ is $(\eta,p)$-uniform, $e(G[V'_i]) \leq (1 + \eta)p\binom{|V'_i|}{2}$.
Hence, we have
\begin{align*}
    \frac{pn |X_i|}{100}
    & \leq \sum_{x \in X_i} d_B(v, V_i) \leq 2 e(G_B[V'_i]) = 2 (e(G[V'_i]) - e_R(G[V'_i])) \\
    &\textstyle\le 2((1+\eta)p\binom{|V'_i|}{2}-(1-\beta)p\binom{|V'_i|}{2})
     = \textstyle 2p\binom{|V'_i|}{2}(\beta + \eta)
    \leq 18 \beta p n^2,
\end{align*}
where in the last line we used $\eta \ll \beta$ and $|V'_i|\le 3n$.
We deduce that $|X_1|, |X_2| \le 1800\beta n$.
Very similar calculations can be used to get that $|Y_i|\le 1800\beta n$.

Let $V'_{1,1}=V_1'\setminus (X_1\cup Y_1)$ and $V'_{2,1}=V_2'\setminus(X_2\cup Y_2)$.
For $i\ge 1$, if there exists a vertex $v_i\in V'_{1,i}\cup V'_{2,i}$ such that $d(v_i,V_{1,i})\le pn/25$ or $d(v_i,V_{2,i})\le pn/25$,
then we update $V_{1,i+1}=V_{1,i}\setminus \{v_i\}$ and $V_{2,i+1}=V_{2,i}\setminus \{v_i\}$.
If no such vertex exists, then we stop this process.
We claim that this process stops at some time $j^*$ such that $1\le j^*\le 2000/p$.
Otherwise, suppose $j^*\ge \lceil \frac {2000}p\rceil=l$,
let $Z=\{v_1,\dots, v_{l}\}$ denote the set of vertices we have removed up to step $l$, and let $Z'\subset Z$ be the set of those vertices having less than $pn/25$ neighbours in $V_{1,l}$.
If $|Z'|\ge 1000/p$, then by property~\ref{Gnp:4} we have 
\[\frac{pn|Z'|}{25}\ge e(Z',V_{1,l})\ge \frac{p|Z'||V_{1,l}|}{2}\ge \frac{pn|Z'|}8,\]
where in the last inequality we used $\beta, 1/C \ll 1$ to deduce that $|V'_{1,l}| = |V'_1| - |X_1| - |Y_1| - l \geq (1 - \beta)n - 3600\beta n - 2000n/C - 1 \geq n/2$.
This is a contradiction, so we deduce that $|Z'| < 1000/p$ and therefore $|Z\setminus Z'| > 1000/p$.
Proceeding similarly, again by~\ref{Gnp:4} we have
\[\frac{pn|Z\setminus Z'|}{25} \geq e(Z\setminus Z',V_{2,l})\ge \frac{p|Z\setminus Z'||V_{2,l}|}{2}\ge \frac{pn|Z\setminus Z'|}{4},\]
where the bound in $|V_{2,l}|$ is obtained as the one for $|V_{1,l}|$.
Again, this is a contradiction.
We deduce that indeed the process stopped in some step $j^\ast \leq 2000/p$. 

Let $V_1=V_{1,j^*}$, $V_2=V_{2,j^*}$, and $W=V(G)\setminus (V_1\cup V_2)$.
We will check that propierties~\ref{stability:1}--\ref{stability:4} hold.
Since $j^*\le 2000/p \leq 2000n/C$ and $|X_i\cup Y_i|\le 3600\beta n$, we have that
$|V'_i \setminus V_i| = |X_i \cup Y_i| + j^\ast \leq 3600 \beta n$, where we used $1/C \ll \beta$ in the last step.
Since $\beta \ll \alpha$, this inequality together with $|V'_i| \geq (1 - \beta)n$, imply~\ref{stability:1}.

To see that~\ref{stability:3} holds, consider the set $V'_i \setminus V_i$.
We know already that $|V'_i \setminus V_i| \leq 3600 \beta n$ holds.
Next, using \ref{Gnp:1} or \ref{Gnp:1s} (depending if $|V'_i \setminus V_i| \geq \eta n$ or not) we get, in any case, that 
$e(G[V'_i \setminus V_i]) \leq 7 p (3600 \beta n)^2$ and $e(G[V'_i \setminus V_i,V_i]) \leq 7 p (3600 \beta n) |V_i|$.
Using all of this, and the fact that $\beta \ll \alpha$, we get
\begin{eqnarray*}e(G_R[V_i])&\ge& e(G_R[V'_i])-e(G[V'_i\setminus V_i])-e(G[V'_i\setminus V_i,V_i])\\
&\ge &(1-\beta)\tbinom{|V_i'|}{2}-7 p (3600 \beta n)^2-7 p (3600 \beta n) |V_i|\\
&\ge&(1-\alpha)p\tbinom{|V_i|}{2}.\end{eqnarray*}
Property~\ref{stability:4} follows from a similar calculation.

We now check the degree conditions. For $i\in \{1,2\}$ and $v\in V_i$, by definition we have 
\[d_R(v,V_i)\ge d(v,V_i)-\frac{pn}{100}\ge \frac{pn}{25}-\frac{pn}{100}\ge \frac{pn}{100}\]
and
\[d_B(v,V_{3-i})\ge d(v,V_{3-i})-\frac{pn}{100}\ge \frac{pn}{25}-\frac{pn}{100}\ge \frac{pn}{100}, \]
proving~\ref{stability:2}.
This finishes the proof of the claim.
\end{proofclaim}


\begin{claim}
$G_B[V_1,V_2]$ is $(cn,10)$-bipartite-expander and $5 \sqrt{\alpha}n$-bipartite-joined.
\end{claim}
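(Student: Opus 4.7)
The expander assertion is immediate from (S2) and (G3). By (S2), every vertex in $V_1$ has at least $pn/100$ blue neighbours in $V_2$, and symmetrically every vertex in $V_2$ has at least $pn/100$ blue neighbours in $V_1$. Hence $\delta(G_B[V_1,V_2]) \geq pn/100$, and (G3) directly yields that $G_B[V_1,V_2]$ is a $(cn,10)$-bipartite-expander.

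For the bipartite-joined assertion I would argue by contradiction. Suppose $A \subseteq V_1$ and $B \subseteq V_2$ satisfy $|A|=|B|=2\sqrt{\alpha}n$ and $e_B(A,B)=0$, so every $G$-edge between $A$ and $B$ is red. Since $|A|,|B| \geq \eta N$, property (G1) gives $e_G(A,B) \geq (1-\eta)p|A||B| = 4(1-\eta)\alpha pn^2$. Partition $V_1 \times V_2$ into the four rectangles induced by $(A, V_1 \setminus A)$ and $(B, V_2 \setminus B)$ and apply (G1) as an upper bound to the blue edges in the three rectangles other than $A \times B$ (which contributes no blue edges). Combined with the global lower bound (S4),
\[
(1-\alpha)pn^2 \leq e_B(V_1,V_2) \leq (1+\eta)p\bigl(|V_1||V_2| - 4\alpha n^2\bigr),
\]
which after rearrangement yields $|V_1||V_2| \geq (1+2\alpha)n^2$.

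To close the contradiction I would establish the matching bound $|V_1||V_2| < (1+2\alpha)n^2$, and in fact it is enough to show $|V_i|<n$ for both $i \in \{1,2\}$. If instead $|V_1|\geq n$, I would apply the Cycle finder lemma (\cref{lemma:cyclesprescribedlength}) with $\lambda = 1/100$ to $H := G_R[V_1]$: $|V(H)|\geq n > \lambda n$, (S2) gives $\delta(H)\geq pn/100 \geq \lambda np$, and (S3) together with (G1) bound
\[
|E(G[V_1])| - |E(H)| = e_B(V_1) \leq (1+\eta)p\binom{|V_1|}{2} - (1-\alpha)pn^2/2,
\]
which is at most $\varepsilon pn^2$ provided $|V_1|$ is sufficiently close to $n$. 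Cycle finder then produces a red $C_n \subseteq H$, contradicting the no-monochromatic-$C_n$ hypothesis; the same argument for $V_2$ gives $|V_2|<n$ and hence $|V_1||V_2| < n^2 < (1+2\alpha)n^2$.

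The main obstacle is verifying the discrepancy hypothesis of Cycle finder, because a priori (S1) together with $|V_1|+|V_2|\leq N$ only gives $|V_i| \leq (1+\alpha)n + C/p$, and for very small $p$ this can be considerably larger than $n$. I would handle this by tightening the constant hierarchy (ensuring $\alpha$ is small enough compared to the $\varepsilon$ produced by Cycle finder) and by exploiting the constraint $|V_1|+|V_2| \leq N$: if $|V_1|$ is far larger than $n$, then $|V_2|$ is correspondingly small, in which case one of the two alternatives forces either the discrepancy bound to hold after passing to an appropriate $n$-subset chosen via (G1) and (G4), or one directly obtains $|V_1||V_2|<(1+2\alpha)n^2$ from size considerations alone.
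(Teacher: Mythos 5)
Your treatment of the expander assertion matches the paper exactly: (S4) (in your numbering, (S2) in the paper's) gives $\delta(G_B[V_1,V_2]) \geq pn/100$, and property (G3) (equivalently Lemma~\ref{lemma:expansionfromminimumdegree}) immediately yields the $(cn,10)$-bipartite-expander.

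For the joinedness, however, your route deviates from the paper and has a genuine gap. Your intermediate step is correct: if $e_B(A,B)=0$ for some $|A|=|B|=2\sqrt{\alpha}n$, then $(\eta,p)$-uniformity together with (S4) forces $|V_1||V_2| \geq (1+2\alpha)n^2$. But the complementary bound $|V_1||V_2| < (1+2\alpha)n^2$ is not available, and indeed can be false. When $p$ is close to $C/n$ we have $N = 2n + C/p$ close to $3n$, the parts given by the stability lemma can each have size near $3n/2$, and after the reallocations one can perfectly well have $|V_1| \approx |V_2| \approx 3n/2$, giving $|V_1||V_2| \approx 9n^2/4 \gg (1+2\alpha)n^2$. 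In this regime, your Cycle-finder argument for $|V_1| < n$ also collapses: the discrepancy hypothesis $e_B(V_1) \leq \varepsilon p n^2$ needs $|V_1|$ to be within $(1+O(\varepsilon))n$, and with $|V_1| \approx 3n/2$ the slack $(1+\eta)p\binom{|V_1|}{2} - (1-\alpha)pn^2/2$ is of order $pn^2$, not $\varepsilon pn^2$. Neither of the fallbacks you sketch — "passing to an $n$-subset" (which destroys both the minimum-degree and the edge-discrepancy controls) nor "size considerations alone" — closes this. Note also that the paper itself later asserts that one of $|V_1^*|, |V_2^*| \geq n$, which is inconsistent with your intended conclusion that both $|V_i| < n$.

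The paper's argument is cleaner and avoids any bound on $|V_1||V_2|$. The key is that $V_1 \subseteq V_1'$ and $V_2 \subseteq V_2'$, where $(V_1',V_2')$ is the partition produced by the Stability lemma (Theorem~\ref{theorem:stability:1}), which is $\beta$-odd-extremal with $\beta \ll \alpha$. Hence $e_R(V_1,V_2) \leq e_R(V_1',V_2') \leq \beta\, e(V_1',V_2') \leq \beta(1+\eta)p|V_1'||V_2'| = O(\beta)pn^2$. But if $A\times B$ is all red, then $e_R(V_1,V_2) \geq (1-\eta)p|A||B| \geq 3\alpha pn^2$, and since $\beta \ll \alpha$ this is an outright contradiction, independent of the sizes of $V_1, V_2$. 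Your version, which consults only the weaker $\alpha$-level lower bound (S4) rather than the $\beta$-level extremality of the original partition, loses precisely the margin that makes the contradiction automatic.
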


\begin{proofclaim}
The first property follows from Lemma~\ref{lemma:expansionfromminimumdegree} (applied with $\eta = 1/100$ and $d=10$) as $\delta(G_B[V_1,V_2])\ge pn/100$.
Now, suppose that $G_B[V_1,V_2]$ is not $5 \sqrt{\alpha}n$-bipartite-joined, and let $X\subset V_1$ and $Y\subset V_2$ be subsets of size $5 \sqrt{\alpha}n$ with no blue edges in between. Since $G$ is $(\eta,p)$-uniform, we have
\[e(G_R[X,Y])\ge (1-\eta)p|X||Y|\ge (1-\eta)25 \alpha pn^2 > 20 \alpha p n^2,\]
where we used $\eta \ll 1$ in the last step.
On the other hand, since we have $|V_1|, |V_2| \leq N \leq 2n+C/p \leq 3n$, we get
\begin{align*}
    e(G_R[V_1,V_2])=e(G[V_1,V_2])-e_B(G[V_1,V_2])
    &\le (1+\eta)p|V_1||V_2|-(1-\alpha)p|V_1||V_2|\\
    &\le 18 \alpha pn^2,
\end{align*}
a contradiction. 
\end{proofclaim}

We now want to use Lemma~\ref{lemma:connectingpath} to refine the partition further and obtain a very fine stability, as sketched before.
Let $B\subset W$ denote the set of vertices having less than $pn/25$ neighbours either in $V_1$ or $V_2$.
As in the proof of Claim~\ref{claim:stability:odd}, using~\ref{Gnp:4} we can deduce that $|B|< 2000/p$.
Now, suppose there is a vertex in $v \in W \setminus B$ with at least $pn/100$ blue neighbours both in $V_1$ and in $V_2$.
Since $G_B[V_1,V_2]$ is $(cn,10)$-bipartite-expander and $5\sqrt{\alpha}n$-bipartite-joined, we can use Lemma~\ref{lemma:connectingpath} to find an $N_B(v,V_1),N_B(v,V_2)$-path of length $n-1$, which together with $w$ completes a cycle of length $n$, a contradiction.
For each $i \in \{1, 2\}$, let $W_i\subset W\setminus B$ be the set of vertices having less than $pn/100$ blue neighbours in $V_{i}$.
We have proved that $W = B \cup W_1 \cup W_2$.
Moreover, for each $i \in \{1, 2\}$ and $v \in W_i$, we have that
\[ d_R(v, V_i) \geq d(v, V_i) - d_B(v, V_i) \geq \frac{pn}{25} - \frac{pn}{100} \geq \frac{pn}{100}. \]

%

We have that $V(G) = B \cup (V_1 \cup W_1) \cup (V_2 \cup W_2)$.
Since $|B| \leq 2000/p$ and $|V(G)| = N \geq 2n+C/p$, we have that one of $V_1 \cup W_1$ or $V_2 \cup W_2$ must have size at least $(N - |B|)/2 \geq n$.
Without loss of generality, assume that it is $V_1 \cup W_1$.
Let $W'_1 \subseteq W_1$ be such that $|V_1 \cup W'_1| = \max\{ n, |V_1| \}$.
Note that by \ref{stability:1} we have that $|W'_1| \leq \alpha n$.

We wish to apply Lemma~\ref{lemma:cyclesprescribedlength} to $H=G_R[V_1 \cup W'_1]$ with $\lambda=1/300$. Note that $|V(H)|=n\geq N/100$, $\delta(H)\geq pn/100 \geq pN/300$. Moreover, the edges missing in $H$ are either the ones in blue in $V_1$, which are at most $\alpha pN^2$ or, or the ones in blue added from adding $W'_1$, which are at most $\alpha  pN^2/100 $. Therefore, we have that $\big| e(G[V(H)])-e(H) \big| \leq 2\alpha p N^2$. If $\alpha$ is small enough, we get that $H$ contains a copy of $C_n$ and we are done. This finishes the proof for odd $n$.

\medskip \noindent\textit{Case 2: $n$ is even.}
Let $n$ be even.
In this case, $R(C_n) = 3n/2 - 1$,
so we can assume that $N=3n/2+C/p$.
Let $G=G(N,p)$.
Suppose that exists a $2$-colouring of $E(G)\to \{R,B\}$ without a monochromatic cycle of length $n$.
In the same way, as in the odd case, we can prove the following refined stability result.

\begin{claim}[Fine stability, even case]\label{claim:stability:even}
There exists a partition $V(G)=V_1\cup V_2\cup W$ such that 
\begin{enumerate}[\upshape{\textbf{(S'\arabic*)}}]
    \item\label{stability:even:1} $|V_1|\ge (1-\alpha)n$.
    \item $|V_2|\ge (1-\alpha)n/2$.
    \item\label{stability:even:3} $e(G_R[V_1])\ge (1-\alpha)p\tbinom{|V_1|}{2}$.
    \item\label{stability:even:4} $e(G_B[V_1,V_2])\ge (1-\alpha)|V_1||V_2|$.
    \item\label{stability:2:even} For $i\in \{1,2\}$, every vertex $v\in V_i$ satisfies $d_R(v,V_i)\ge pn/100$ and $d_B(v,V_{3-i})\ge pn/100$. \hfill $\square$
\end{enumerate} 
\end{claim}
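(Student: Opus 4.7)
The plan is to mirror the three-step strategy used to prove Claim~\ref{claim:stability:odd}, adapted to the even-extremal structure. First, I would apply Theorem~\ref{theorem:stability:1} with $\beta \ll \alpha$ in place of $\alpha$ to obtain a partition $V(G) = V'_1 \cup V'_2$ witnessing the $\beta$-even-extremal property, with $V'_1$ chosen to be the larger part. This yields $|V'_1| \geq (1-\beta)\cdot 2N/3 \geq (1-\beta)n$, $|V'_2| \geq (1-\beta)\cdot N/3 \geq (1-\beta)n/2$, together with $e_R(V'_1) \geq (1-\beta) e(V'_1)$ and $e_B(V'_1, V'_2) \geq (1-\beta) e(V'_1, V'_2)$.

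Next, I would define the bad sets
\[
X_1 := \{v \in V'_1 : d_B(v, V'_1) \geq pn/100\},\quad Y_i := \{v \in V'_i : d_R(v, V'_{3-i}) \geq pn/100\}\ \text{for } i \in \{1,2\},
\]
and show that $|X_1|, |Y_1|, |Y_2| = O(\beta n)$ by double-counting. The inputs are the $(\eta, p)$-uniformity of $G$, which also upper-bounds $|V'_1| \leq (1+O(\beta))n$ and $|V'_2| \leq (1+O(\beta))n/2$ (up to the negligible $C/p$ slack), and the extremal bounds $e_B(V'_1) \leq O(\beta)pn^2$ and $e_R(V'_1, V'_2) \leq O(\beta)pn^2$.

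Starting from $V'_{1,1} := V'_1 \setminus (X_1 \cup Y_1)$ and $V'_{2,1} := V'_2 \setminus Y_2$, I would then iteratively remove any vertex in $V'_{1,i} \cup V'_{2,i}$ whose degree into one of $V'_{1,i}, V'_{2,i}$ drops below $pn/25$; property~\ref{Gnp:4} forces this process to terminate within $2000/p$ steps, exactly as in the odd case. Setting $V_1, V_2$ to be the surviving sets and $W := V(G) \setminus (V_1 \cup V_2)$, the size and edge conditions of the claim follow from the size/edge estimates, while the inequalities $d_R(v, V_1) \geq pn/100$ for $v \in V_1$, as well as $d_B(v, V_{3-i}) \geq pn/100$ for $v \in V_i$, follow because each surviving vertex has degree at least $pn/25$ into each of $V_1, V_2$ but at most $pn/100$ edges of the wrong colour.

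The principal obstacle is the remaining half of the degree condition, namely $d_R(v, V_2) \geq pn/100$ for $v \in V_2$. Unlike the odd case, the $\beta$-even-extremal property places no constraint on the colouring of edges inside $V'_2$, so the natural analogue $X_2 := \{v \in V'_2 : d_B(v, V'_2) \geq pn/100\}$ is not bounded by the extremal input. I would handle this by also moving $X_2$ into $W$ and then proving $|X_2| = O(1/p)$ via a $C_n$-finding contradiction: if $|X_2|$ had size $\Omega(1/p)$, then the resulting $\Omega(n)$ blue edges inside $V'_2$, together with the bipartite-expander structure of $G_B[V_1, V_2]$ (inherited from~\ref{Gnp:3} and the established minimum-degree bound), should permit the construction of a blue copy of $C_n$ by splicing two blue edges inside $V'_2$ through a bipartite blue path of appropriate length, in the spirit of Lemma~\ref{lemma:connectingpath} but adapted to the even-length regime, the parity working out because $n$ is even. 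Making this splicing step rigorous, including the parity analysis absent from the odd case, is the main technical point specific to the even version of the claim.
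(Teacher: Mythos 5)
You are right to flag a genuine gap: the odd-case argument does not adapt verbatim because the $\beta$-even-extremal structure imposes no bound on $e_B(G[V'_2])$, so the set $X_2 := \{v \in V'_2 : d_B(v, V'_2) \geq pn/100\}$ cannot be controlled by the double-counting that works for $X_1$, $Y_1$, $Y_2$. Consequently, the clause $d_R(v,V_2)\ge pn/100$ in \ref{stability:2:even} is not established by ``the same way as in the odd case''; the paper's one-line justification is too terse on this point.

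Your proposed repair is, however, both unnecessary and unlikely to work as sketched. Unnecessary, because that clause is never used downstream: Case~2.1 invokes \cref{lemma:cyclesprescribedlength} on $G_R[V_1^*]$ and needs only $d_R(v,V_1)\ge pn/100$ for $v\in V_1$; Case~2.2 invokes \cref{lemma:cyclesprescribedlength-bipartite} on $G_B[V_1,V_2^*]$ and needs only $d_B(v,V_2)\ge pn/100$ for $v\in V_1$ together with $d_B(v,V_1)\ge pn/100$ for $v\in V_2$. So the correct resolution is to drop $d_R(v,V_2)\ge pn/100$ from \ref{stability:2:even}: removing $X_1$, $Y_1$, $Y_2$ (but not $X_2$) and running the iterative cleaning step exactly as in Claim~\ref{claim:stability:odd} already yields the three degree conditions that are actually needed, and the remainder of the even case is unaffected. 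As for the $C_n$-finding contradiction you suggest to bound $|X_2|$, it runs into a counting obstacle: splicing a blue path on $k\ge 3$ vertices inside $V_2$ into a $V_2$-to-$V_2$ blue path of length $n-k+1$ in $G_B[V_1,V_2]$ consumes $(n+k-1)/2 > n/2$ vertices of $V_2$, while at this stage one only knows $|V_2|\ge (1-\alpha)n/2$. Blue edges inside $V_2$ only tighten, never relax, the $V_2$-budget of any blue $C_n$, so the intended contradiction is not available in this form.
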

We now refine this structure further.
Let $B\subset W$  be the set of vertices having less than $pn/25$ neighbours in either $V_1$ or $V_2$.
The same argument as before gives that $|B|< 2000/p$.
Let $W_2\subset W\setminus B$ denote the set of vertices having at least $pn/100$ blue neighbours in $V_1$, and let $W_1=W\setminus (B\cup W_2)$ be the set of vertices having less than $pn/100$ blue neighbours in $V_1$.
Let $V_1^*=V_1\cup W_1$ and $V_2^*=V_2\cup W_2$.
Since $N=3n/2+C/p$, then either $|V_1^*|\ge n+C/2p$ or $|V_2^*|\ge n/2+C/2p$ holds.
We split the proof into two cases.

\medskip \noindent\textit{Case 2.1: $|V_1^*|\ge n+C/2p$.}
Using~\ref{stability:2:even} and the definition of $W_1$ we have
\[d_R(v,V_1^*)\ge d_R(v,V_1)\ge pn/25-pn/100\ge pn/100\]
for every $v\in V_1^*$. Moreover, by~\ref{stability:even:3} we have $e(G_R[V_1^*])\ge (1-\alpha)p\tbinom{|V_1|}{2}$.
Therefore, we can use Lemma~\ref{lemma:cyclesprescribedlength} to find a cycle of length $n$ in $G_R[V_1^*]$.

\medskip \noindent\textit{Case 2.2: $|V_2^*|\ge n/2+C/4p$.}
Because of~\ref{stability:2:even} and the definition of $W_2$,
every vertex $v\in V_2^*$ satisfies
\[d_B(v,V_1)\ge pn/25-pn/100\ge pn/100,\]
and every vertex $v\in V_1$ satisfies $d_B(v,V_2^*)\ge d_B(v,V_2)\ge pn/25$.
Then, because of~\ref{stability:2:even}, we can use Lemma~\ref{lemma:cyclesprescribedlength-bipartite} to find a cycle of length $n$ in $G_B[V_1,V_2^*]$.
This finishes all cases.
\end{proof}

\printbibliography

\appendix

\section{Extendability in bipartite graphs} \label{appendix:bipartiteextendibility}
We review tree embeddings in bipartite expander graphs.
For this, we use the notion of `extendability',
as introduced by Glebov, Johanssen and Krivelevich~\cite{Glebov2013} and further developed by Montgomery~\cite{M2019}.

\subsection{Bipartite extendability}
We propose the following natural definition, which is a bipartite version of `extendability' (cf. \cite[Definition 3.1]{M2019}).

\begin{definition}
Let $d\ge 3$ and $m\ge 1$.
Let $G$ be a bipartite graph with parts $V_1$ and $V_2$, and let $S\subset G$ be a subgraph.
We say that $S$ is \textit{$(d,m)$-bipartite-extendable} if $S$ has maximum degree at most $d$ and 
\[|N(U)\setminus V(S)|\ge (d-1)|U|-\sum_{x\in U\cap V(S)}(d_S(x)-1), \]
for all $U\subset V_i$ with $|U|\le 2m$ and $i \in \{ 1,2 \}$.
\end{definition}

The following lemma allows us to extend bipartite-extendable subgraphs by a single edge (cf. \cite[Lemma 5.2.6]{Glebov2013}).

\begin{lemma}\label{lemma:adding:edge:bipartite}
Let $d\ge 3$ and $m\ge 1$. Let $G$ be a bipartite graph with parts $V_1$ and $V_2$, and let $S\subset G$ be a $(d,m)$-bipartite-extendable subgraph of $G$. Suppose that every $i \in \{1,2\}$,
and every subset $U\subset V_i$ with $m\le |U|\le 2m$ satisfies
\[|N(U)|\ge |V(S)\cap V_i|+2dm+1.\]
Then, for every vertex $s\in V(S)$ with $d_S(s)\le d-1$, there exists a vertex $y\in N_G(s)\setminus V(S)$ such that the graph $S+sy$ is $(d,m)$-bipartite-extendable. 
\end{lemma}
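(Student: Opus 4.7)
The approach is to mimic the Friedman--Pippenger style argument used in the proof of Lemma~\ref{lemma:friedmanpippenger-expansion}, adapted to the bipartite setting. Assume without loss of generality that $s\in V_1$, so the candidate $y$ must lie in $V_2$. Arguing by contradiction, suppose that for every $y\in N_G(s)\setminus V(S)$ the subgraph $S+sy$ fails to be $(d,m)$-bipartite-extendable. A direct comparison of the extendability inequality for $S+sy$ versus $S$ shows that on any test set $U\subseteq V_2$ the inequality is unchanged (since $s\notin U$ and $d_{S+sy}(y)-1=0$), so the failure must be witnessed by a set $U_y\subseteq V_1$ of size at most $2m$ that does not contain $s$, satisfies $y\in N_G(U_y)\setminus V(S)$, and on which the extendability bound for $S$ is tight. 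Call such a $U_y$ \emph{critical}.

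The key structural step is to show that critical sets are small and that unions of critical sets remain critical. For the size bound, if $|U_y|\ge m$ then the hypothesis of the lemma gives $|N(U_y)|\ge |V(S)\cap V_1|+2dm+1$, and combining this with the upper bound $|N(U_y)\setminus V(S)|\le(d-1)|U_y|\le 2(d-1)m$ coming from criticality yields a contradiction. For the union property, define
\[
\phi_S(U):=|N(U)\setminus V(S)|-(d-1)|U|+\sum_{x\in U\cap V(S)}(d_S(x)-1),
\]
which is submodular, since the external neighbourhood is submodular and the remaining two terms are modular. Extendability says $\phi_S\ge 0$, and criticality is exactly $\phi_S=0$; submodularity therefore forces $A\cup B$ to be critical whenever $A,B$ are critical and $|A\cup B|\le 2m$. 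Combined with the size bound $|U_y|<m$ and a straightforward induction, this shows that the full union $U^\ast:=\bigcup_{y} U_y$ over all bad $y$'s satisfies $|U^\ast|<m$ and is itself critical.

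I would finish by applying extendability to the test set $U':=U^\ast\cup\{s\}\subseteq V_1$, which has size at most $m\le 2m$. By construction $N_G(s)\setminus V(S)\subseteq N(U^\ast)$, hence $N(U')\setminus V(S)=N(U^\ast)\setminus V(S)$; using $U'\cap V(S)=(U^\ast\cap V(S))\cup\{s\}$, the inequality $\phi_S(U')\ge 0$ together with the equality $\phi_S(U^\ast)=0$ collapses to $d_S(s)\ge d$, contradicting the hypothesis $d_S(s)\le d-1$. The main obstacle is the critical-size bound in the second paragraph: it is precisely the numerical content that lets the submodularity induction remain inside the permissible range $|U|\le 2m$, and hence produces a single, bounded $U^\ast$ rather than an uncontrolled accumulation as more bad $y$'s are absorbed. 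All other ingredients—submodularity of the external neighbourhood, the union-of-criticals argument, and the telescoping computation that extracts $d_S(s)\ge d$—are routine adaptations of the non-bipartite Friedman--Pippenger template.
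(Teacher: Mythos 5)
Your proof is correct and mirrors the paper's argument almost exactly: you define the same potential $\phi_S = D(\cdot\,;S)$, make the same reduction to a witness set $U_y\subseteq V_1$ with $s\notin U_y$ and $y\in N(U_y)$, prove the same two structural facts (critical sets have size at most $m$; unions of critical sets stay critical — here your explicit appeal to submodularity is actually cleaner than the paper's claimed equality, which only holds as an inequality in general), and close with the identical computation $\phi_S(U^\ast\cup\{s\}) = d_S(s)-d < 0$. The one small slip is the intermediate bound $|N(U_y)\setminus V(S)|\le (d-1)|U_y|$, which should read $\le d|U_y|\le 2dm$ because $d_S(x)-1$ may be negative; this still contradicts $|N(U_y)\setminus V(S)|\ge 2dm+1$, so your conclusion is unaffected.
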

\begin{proof}
For $i \in \{1, 2\}$, a subset $X\subset V_i$,
and a subgraph $H\subset G$,
we define
\[D(X;H)=|N(X)\setminus V(H)|-(d-1)|X|+\sum_{x\in X\cap V(H)}(d_H(x)-1).\]
We observe that a subgraph $H\subset G$ is $(d,m)$-bipartite-extendable if for each $i \in \{1,2\}$ and every $X\subset V_i$ with $|X|\leq 2m$ we have $D(X;H)\ge 0$.

\begin{claim}\label{claim:extendability:1} For $i\in \{1,2\}$, suppose that $X\subset V_i$ satisfies $|X|\le 2m$ and $D(X;S)=0$. Then $|X|\le m$. 
\end{claim}

\begin{proofclaim}Suppose that $m<|X|\le 2m$. Then by assumption, we have
\[|N(X)\setminus V(S)|\ge 2dm+1\ge d|X|+1>(d-1)|X|-\sum_{x\in X\cap V(S)}(d_S(x)-1),\]
contradicting that $D(X;S)=0$.
This proves the claim.
\end{proofclaim}

\begin{claim}\label{claim:extendability:2}For $i\in \{1,2\}$, suppose that $X,Y\subset V_i$ satisfy $|X|,|Y|\le 2m$ and $D(X;S)=D(Y;S)=0$. Then $D(X\cup Y;S)=0$.
\end{claim}

\begin{proofclaim}Since $G$ is bipartite, we have that 
\[|N(X\cup Y)\setminus V(S)|=|N(X)\setminus V(S)|+|N(Y)\setminus V(S)|-|N(X\cap Y)\setminus V(S)|.\]
Hence it is easy to check that 
\[D(X\cup Y;S)=D(X;S)+D(Y;S)-D(X\cap Y;S)=-D(X\cap Y;S).\]
 Therefore, as $S$ is $(d,m)$-extendable, we have $D(X\cap Y;S)\ge 0$ and whence $D(X\cup Y;S)\le 0$. However, by Claim~\ref{claim:extendability:1} we deduce that $|X\cup Y|\le 2m$, and then, as $S$ is $(d,m)$-extendable, we have $D(X\cup Y;S)\ge 0$ which implies $D(X\cup Y;S)=0$.
 The claim has been proven.
\end{proofclaim}

For a contradiction, let us suppose that  $s\in V_1$ and that for every $y\in N_G(s)\setminus V(S)$, the graph $S+sy$ is not $(d,m)$-bipartite-extendable.
Therefore, for every $y \in N_G(s) \setminus V(S)$, there exists $i \in \{1, 2\}$ and a set $X_y \subseteq V_i$ with $|X_y| \leq 2m$ such that $D(X_y ; S + sy) < 0$.
Now note that if $X \subseteq V_2$, then since $N(X) \subseteq V_1$ and $N_G(s) \subseteq V_2$,
we have that for any $y \in N_G(s) \setminus V(S)$ it actually holds that $D(X ; S + sy) \geq D(X; S) \geq 0$.
This implies that for every $y \in N_G(s) \setminus V(S)$, we must have $X_y \subseteq V_1$. Therefore, for every $ y\in N_G(s)\setminus V(S)$, there exists a set $X_y\subset V_1$, with $|X_y|\le 2m$, such that $D(X_y;S+sy)<0$.

Note that 
\begin{equation}\label{eq:critical}
    D(X;S+sy)=D(X;S)-\mathbf{1}[y\in N_G(X)]+\mathbf{1}[s\in X]
\end{equation}
holds for all $X\subset V_1$.
Therefore, for all $y \in N_G(s) \setminus V(S)$,
in order to satisfy $D(X_y;S+sy) < 0$,
we must have that $D(X_y;S)=0$, $y\in N(X_y)$, and $s\not\in X_y$.

Let $X^*=\bigcup_{y\in N_G(s)\setminus V(S)}X_y$. Using Claims~\ref{claim:extendability:1} and~\ref{claim:extendability:2}, we deduce that
\begin{enumerate}
    \item\label{eq:extendability:1} $N_G(s)\setminus V(S)\subset N_G(X^*)$, 
    \item\label{eq:extendability:2} $s\not\in X^*$, and
    \item\label{eq:extendability:3} $D(X^*;S)=0$ and $|X^*|\le m$.
\end{enumerate}
Observe that \ref{eq:extendability:1} implies $N(\{s\}\cup X^*\setminus V(S))=N(X^*\setminus V(S))$. Therefore, as $d_S(s)\le d-1$, we have
\begin{equation}\label{eq:extendability:contradiction}D(\{s\}\cup X^*;S)=D(X^*;S)-(d-1)+(d_S(s)-1)\le -(d-1)+(d-2)<0.\end{equation}
However, as $S$ is $(d,m)$-bipartite-extendable and $|\{s\}\cup X^*|\le m+1$, we have $D(\{s\}\cup X^*;S)\ge 0$, which contradicts~\eqref{eq:extendability:contradiction}.
\end{proof}

Now we use Lemma~\ref{lemma:adding:edge:bipartite} iteratively to extend a bipartite-extendable subgraph by attaching a tree 
while retaining bipartite-extendability (cf. \cite[Corollary 3.7]{M2019}).

\begin{corollary}\label{corollary:extendable:bipartite}
Let $d\ge 3$ and $m\ge 1$, and let $G$ be a bipartite graph with parts $V_1$ and $V_2$. Let $S\subseteq G$ be a $(d,m)$-bipartite-extendable subgraph,
and let $T$ be a tree with $\Delta(T) \leq d$ and bipartition classes $D_1$ and $D_2$.
Suppose that $G$ is $m$-bipartite-joined and that, for every $i\in \{1,2\}$, we have \begin{equation}
    |V(S)\cap V_i|+|D_i|+(2d+1)m+1\le |V_i|. \label{equation:assumptionofthetreembeddingcorollary}
\end{equation}
Then, for each $i\in\{1,2\}$, for every $s_i\in S\cap V_i$ with $d_S(s_i)\le d-1$ and $t_i\in D_i$,
there exists a copy $R_i$ of $T$ such that $t_i$ is copied to $s_i$,
$V(R_i) \cap V(S) = \{ s_i \}$,
and $R_i\cup S$ is $(d,m)$-bipartite-extendable. 

\begin{proof}
Let $\{t_i\}=T_0\subset T_1\subset\dots\subset T_l=T$ be a sequence of subtrees, where $T_j$ is obtained from $T_{j-1}$ by adding a leaf. Note that since $G$ is $m$-bipartite-joined, for any $X\subset V_i$ with $|X| \geq m$, the assumption~\eqref{equation:assumptionofthetreembeddingcorollary} implies 
\begin{equation}\label{eq:embedding:extendable}
    N(X)\ge |V_i|-m\ge  |V(S)\cap V_i|+|D_i|+2dm+1. 
\end{equation}
Let $s_i\in V_i$ be a vertex with $d_S(s_i)\le d-1$ and map $t_i$ to $s_i$.
This gives a copy $R^0_i$ of $T_0$,
with $V(R^0_i) \cap V(S) = \{s_i\}$,
and $t_i$ is copied to $s_i$,
and $R^0_i \cup S = S$ is $(d,m)$-bipartite-extendable.

Now, for some $0 \le j < l$, suppose that we have found a copy $R_i^j$ of $T_j$ so that $S\cup R_i^j$ is $(d,m)$-bipartite-extendable, $V(R^j_i) \cap V(S) = \{s_i\}$, and $t_i$ is copied to $s_i$.
Since $\Delta(T)\le d$ and because of~\eqref{eq:embedding:extendable}, we may use Lemma~\ref{lemma:adding:edge:bipartite} to extend the copy of $R_i^j$ to a copy of $R_i^{j+1}$ by adding the leaf that forms $T_{j+1}$, and copying it to a yet unused vertex.
Thus we find a copy $R_i^{j+1}$ of $T_{j+1}$ so that $S\cup R_i^{j+1}$ is $(d,m)$-bipartite-extendable, $V(R^{j+1}_i) \cap V(S) = \{s_i\}$, and $t_i$ is copied to $s_i$.
Then $R_i^l=R_i$ is the desired copy of $T$.    
\end{proof}

\end{corollary}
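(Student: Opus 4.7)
The plan is to embed the tree $T$ one vertex at a time via iterated application of Lemma~\ref{lemma:adding:edge:bipartite}. Fix a nested sequence $\{t_i\} = T_0 \subset T_1 \subset \cdots \subset T_l = T$, where each $T_{j+1}$ is obtained from $T_j$ by attaching one new leaf at an existing vertex. Starting from the trivial embedding $R^0_i$ that sends $t_i$ to $s_i$ (and nothing else), I would build embeddings $R^j_i$ of $T_j$ into $G$ preserving three invariants at every stage: $t_i$ is mapped to $s_i$, $V(R^j_i) \cap V(S) = \{s_i\}$, and $S \cup R^j_i$ is $(d,m)$-bipartite-extendable.

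For the inductive step, suppose $T_{j+1}$ attaches a new leaf at $t \in V(T_j)$; let $v \in V(S \cup R^j_i)$ be the image of $t$. Since $\Delta(T) \le d$ and $t$ gains one new neighbour in $T_{j+1}$, its image satisfies $d_{S \cup R^j_i}(v) \le d-1$, so $v$ is a legitimate anchor for Lemma~\ref{lemma:adding:edge:bipartite}. To invoke that lemma, I need to verify that every $U \subseteq V_k$ with $m \le |U| \le 2m$ satisfies $|N(U)| \ge |V(S \cup R^j_i) \cap V_k| + 2dm + 1$. Here the joinedness hypothesis takes over: since $G$ is $m$-bipartite-joined, any such $U$ has $|N(U)| \ge |V_{3-k}| - m + 1$, for otherwise a set of size $m$ in $V_{3-k}$ would miss $U$ entirely. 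Combining this with the trivial bound $|V(R^j_i) \cap V_{3-k}| \le |D_{3-k}|$ and the hypothesis $|V(S) \cap V_{3-k}| + |D_{3-k}| + (2d+1)m + 1 \le |V_{3-k}|$ yields the needed inequality with room to spare. Then Lemma~\ref{lemma:adding:edge:bipartite} produces a vertex $y \in N_G(v) \setminus V(S \cup R^j_i)$ to host the new leaf, and the extension $R^{j+1}_i := R^j_i + vy$ satisfies all three invariants. After $l$ steps the output $R_i := R^l_i$ is the desired copy of $T$.

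The main obstacle is really just the bookkeeping on the slack: the single constant $(2d+1)m$ in the hypothesis must absorb the already-embedded vertices of both $S$ and $R^j_i$ on the target side, plus the margin $m$ coming from joinedness, and one has to verify these accounts balance at every intermediate stage rather than only at the end. The invariant $V(R^j_i) \cap V(S) = \{s_i\}$ follows automatically because Lemma~\ref{lemma:adding:edge:bipartite} selects the next vertex outside $V(S \cup R^j_i)$; beyond this the whole argument is a routine induction on $j$.
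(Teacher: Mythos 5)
Your proposal is correct and follows essentially the same route as the paper: build $T$ leaf by leaf along a nested chain of subtrees, verify the size hypothesis of Lemma~\ref{lemma:adding:edge:bipartite} from $m$-bipartite-joinedness together with inequality~\eqref{equation:assumptionofthetreembeddingcorollary}, and maintain the invariants $V(R^j_i)\cap V(S)=\{s_i\}$ and bipartite-extendability across the induction. (You state you must check $|N(U)|\ge|V(S\cup R^j_i)\cap V_k|+2dm+1$ but then verify the version with $V_{3-k}$; this is the right thing to prove since $N(U)\subseteq V_{3-k}$, and the same index convention is implicit in the paper's inequality~\eqref{eq:embedding:extendable}.)
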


\subsection{Proof of the lemmas} \label{section:appendix-proofs}
Now we can prove Lemma~\ref{lemma:connectingpath} and Lemma~\ref{lemma:friedmanpippenger-expansion-bipartite}.

\begin{proof}[Proof of Lemma~\ref{lemma:connectingpath}]
Let $I$ denote the graph with vertices $s_1$ and $s_2$ and no edges. An easy case analysis, using that $G$ is a $(m,d)$-bipartite-expander, shows that $I$ is $((d-1)/2,m)$-bipartite-extendable.

Recall that $n$ is odd.
Let $x = (n-3)/2 - \lceil{\log_2m} \rceil$.
We let $T^\ast$ be a broom tree, consisting of a binary tree $B$ of depth $\lceil{\log_2m} \rceil$ whose root vertex is attached to a path $P$ of length $x$.
Let $t \in V(T^\ast)$ be the endpoint of $P$ which is not attached to $B$.
Note that
\begin{enumerate}[\textbf{\upshape{(T\arabic*)}}]
    \item \label{item:vroomvroom-maxdeg} $\Delta(T^\ast) \leq 3 \leq (d-1)/2$,
    \item \label{item:vroomvroom-sizes} $T^\ast$ has bipartition classes $D_1, D_2$ satisfying
    $ |D_1|, |D_2| \leq \left\lceil x/2 \right\rceil + 2m$,
    \item \label{item:vroomvroom-distance} $t$ is at distance $x+\lceil{\log_2m} \rceil = (n-3)/2$ from any other leaf of $T^\ast$,
    \item \label{item:vroomvroom-numberleaves} $T^\ast$ has at least $m$ leaves distinct from $t$.
\end{enumerate} 
We want to use Corollary~\ref{corollary:extendable:bipartite} to find a copy $T_1$ of $T^\ast$ where $t$ is copied to $s_1$, $V(T_1) \cap I = \{s_1\}$ and such that $T_1 \cup I$ is $((d-1)/2, m)$-bipartite-extendable.
We check that the assumptions hold.
We have $I$ is $((d-1)/2, m)$-bipartite-extendable, $d_{I}(s_1) = 0$, and \ref{item:vroomvroom-maxdeg}; so it is only necessary to check that \eqref{equation:assumptionofthetreembeddingcorollary} holds, but this follows from \ref{item:vroomvroom-sizes} since
\[ |V(I) \cap V_i| + |D_i| + dm + 1 \leq 1 + \lceil x/2 \rceil + 2 m + dm + 1 \leq 3n/2 \leq |V_i|. \]
Thus, the desired copy $T_1$ of $T^\ast$ exists.
We repeat the argument, now finding a copy $T_2$ of $T^\ast$ where $t$ is copied to $s_2$ and $V(T_2) \cap (V(T_1) \cup V(I)) = \{ s_2 \}$, the necessary conditions of Corollary~\ref{corollary:extendable:bipartite} are checked by similar calculations.

For each $i \in \{1, 2\}$, let $Q_i \subseteq V(G)$ be the set of leaves in $T_1$ which are not $s_i$.
We must have that $Q_1, Q_2$ are each contained in a different part of the bipartition $\{V_1, V_2\}$.
By \ref{item:vroomvroom-numberleaves}, we have $|Q_1|, |Q_2| \geq m$; and since $G$ is $m$-bipartite-joined there must be an edge $q_1 q_2$ between $q_1 \in Q_1$ and $q_2 \in Q_2$.
By \ref{item:vroomvroom-distance}, for each $i \in \{1, 2\}$ there is a path $P_i$ of length $(n-3)/2$ between $s_i$ and $q_i$.
Their concatenation yields a path of length $2(n-3)/2 + 1 = n-2$ between $s_1$ and $s_2$ in $G$, as required.
\end{proof}

\begin{proof}[Proof of Lemma~\ref{lemma:friedmanpippenger-expansion-bipartite}]
    Let $s \in V_2$ be an arbitrary vertex.
    Note that $S_0 = \{s\}$ is a $(d+2, k)$-bipartite-extendable subgraph of $G$.
    Indeed, $\Delta(S_0) = 0$, and for each $i \in \{1, 2\}$ and $U \subseteq V_i$ with size at most $k$, we have $|N(U) \setminus V(S_0)| \geq |N(U)| - 1 \geq (2d+5)|U| - 1$, where the last inequality follows since $G$ is $(2k,2d+5)$-bipartite-expander.
    Then $|N(U) \setminus V(S_0)| \geq (2d+5)|U| - 1 \geq 
    (d+1)|U|+1 \geq (d+1)|U| - \sum_{x \in U \cap V(S_0)} (d_{S_0}(x) - 1)$, as required.
    
    For each $0 \leq t < 2r$, suppose we have a path $S_t \subseteq G$ of length $t$, starting at $s$, which is a $(d+2, k)$-bipartite-extendable subgraph.
    We construct a path $S_{t+1}$ which extends $S_t$, still has $s$ as an endpoint, and is a $(d+2, k)$-bipartite-extendable subgraph.
    We do this by invoking Lemma~\ref{lemma:adding:edge:bipartite}.
    We need to show that for each $i \in \{1, 2\}$, and every subset $U \subseteq V_i$ with $k \leq |U| \leq 2k$ satisfies $|N(U)| \geq |V(S_t) \cap V_i| + 2(d+2)k + 1$.
    Since $t < 2r$, we have $|V(S_t) \cap V_i| \leq r \leq k$; and together with the fact that $G$ is a $(2k,2 d+5)$-bipartite-expander we indeed have $|N(U)| \geq (2d+5)|U| \geq (2d+5)k \geq |V(S_t) \cap V_i| + 2(d+2)k + 1$, as required.
    Then there exists a vertex $y \notin V(S_t)$ such that $S_{t+1} = S_t + y$ is a path of length $t$ which is a $(d+2,k)$-bipartite-extendable subgraph.
    At the end of this process, we find a path $S_{2r}$ of length $2r$, which starts at $s \in V_2$, which is a $(d+2,k)$-bipartite-extendable subgraph of $G$.
    
    Let $X = V(S_{2r}) \cap V_2$, note that $|X| = r$.
    We claim that $X$ has the required properties, i.e. $G - X$ is a $(2k,d)$-bipartite-expander.
    Indeed, let $i \in \{1, 2\}$ and let $U \subseteq V_i$ have size at most $2k$.
    To conclude, we must show that $|N_{G-X}(U)| \geq d|U|$.
    Using that $S_{2r}$ is a $(d+2, k)$-bipartite-extendable subgraph, we deduce that
    \[ |N_{G-X}(U)| \geq |N_{G}(U) \setminus V(S_{2r})| \geq (d+1)|U| - \sum_{x \in U \cap V(S_{2r})} (d_{S_{2r}(x)} - 1) \geq (d+1)|U| - |U|, \]
    where in the last inequality we used $d_{S_{2r}}(x) \leq 2$, since $S_{2r}$ is a path.
\end{proof}

\end{document}